\documentclass[a4paper,11pt,leqno]{amsart}
\usepackage[big]{norms_style}

\title{Partial Resolutions and Noncrossing Combinatorics}

\author[Minh-T\^{a}m Trinh]{Minh-T\^{a}m Quang Trinh}
\address{Department of Mathematics, Howard University, Washington, DC 20059}
\email{minhtam.trinh@howard.edu}

\author{Nathan Williams}
\address{Mathematical Sciences Department, University of Texas at Dallas, 800 West Campbell Road, Richardson, TX 75080}
\email{nathan.f.williams@gmail.com}


\begin{document}

\begin{abstract}
For any finite reductive group, we compute the central elements in its Hecke algebra that arise from partial Springer resolutions via the Harish-Chandra transform.
Of the two kinds of partial resolution, the larger is the more interesting case.
We deduce formulas for associated Hecke traces, generalizing work of Wan--Wang beyond type $A$, and Deodhar-like decompositions of braid varieties that map to partial Springer resolutions.
From the latter, we construct noncrossing sets that interpolate between rational Catalan and parking objects, generalizing our work with Galashin--Lam.
In parallel, we establish new formulas for arbitrary $a$-degrees of the HOMFLYPT invariants of positive braid closures, from which we construct noncrossing sets for rational Kirkman numbers.
\end{abstract}

\maketitle

\thispagestyle{empty}



\section{Introduction}

\subsection{}\label{subsec:hoefsmit-scott}

Fix a finite Coxeter system $(W, S)$ and a subset $J \subseteq S$ generating a subgroup $W_J \subseteq W$.
Let $H_W$ and $H_{W_J}$ be the Hecke algebras over $\bb{Z}[\X^{\pm 1}]$ corresponding to $W$ and $W_J$.
We take the convention where the Hecke operators $T_s \in H_W$, for $s \in S$, obey the relations $T_s^2 = (\X - 1)T_s + \X$.
We identify $H_{W_J}$ with the subalgebra of $H_W$ generated by the elements $T_s$ with $s \in J$.
Our starting point is the existence of two separate ways to construct elements of the center $Z(H_W)$ from $J$.

First, according to L.\ K.\ Jones \cite{jones-l}, there is an injective, linear map
\begin{align}
	N_J^S : Z(H_{W_J}) \to Z(H_W)
\end{align}
due to Hoefsmit--Scott, called the \dfemph{relative norm}.
To define $N_J^S$, recall that each right coset of $W_J$ in $W$ contains a unique representative of minimal Bruhat length.
Let $W^J$ be the set of such representatives.
Then
\begin{align}
	N_J^S(\alpha) = \sum_{v \in W^J} \X^{-\ell(v)} T_{v^{-1}} \alpha T_{v\vphantom{^{-1}}}
	\quad\text{for all $\alpha \in Z(H_{W_J})$},
\end{align}
where $T_v$ and $\ell(v)$ denote the Hecke operator and Bruhat length of $v$.

Second, when $W$ is crystallographic, we can interpret it as the Weyl group of a split finite reductive group $G$ with Borel $B$.
We can then interpret $H_W$ and $H_{W_J}$ geometrically, as convolution algebras of functions on $(G/B)^2$.
Here, another way to produce elements of $Z(H_W)$ is a certain map from functions on $G$ to functions on $(G/B)^2$, which we call the Harish-Chandra transform, following \cite{ginzburg}.

The main observation of this paper is that the two partial Springer resolutions attached to $J$, as defined in \eqref{eq:springer}, produce functions on $G$ whose Harish-Chandra transforms are relative norms.
Related but different observations appeared in \cite{grojnowski, lusztig_15, bt}, as we discuss in \Cref{rem:prior}.
We give applications to:
\begin{enumerate}
\item 	Traces on $H_W$, generalizing work of Lascoux \cite{lascoux} and Wan--Wang \cite{ww}.

\item 	Deodhar-like decompositions of \dfemph{partial braid Steinberg varieties}, generalizing work of Shende--Treumann--Zaslow \cite{stz} and resembling Mellit's decompositions of character varieties \cite{mellit}.

\item 	The rational noncrossing combinatorics of $(W, S)$, generalizing our prior work with Galashin--Lam \cite{gltw}.

\end{enumerate} 
Our point of view also leads us to new formulas for the bivariate Hecke traces used to construct the HOMFLYPT  link invariant, which simplify formulas from \cite{bt} and give applications to rational Kirkman numbers.

\subsection{The Harish-Chandra Transform}\label{subsec:setup}

Let $\bb{F}$ be a finite field of order $q$.
Let $\bb{G}$ be a connected reductive algebraic group over $\bar{\bb{F}}$, equipped with an $\bb{F}$-structure corresponding to a Frobenius map $F : \bb{G} \to \bb{G}$.
We assume that the characteristic of $\bb{F}$ is a good prime for $\bb{G}$ \cite[28]{carter}.

Fix an $F$-stable maximal torus in an $F$-stable Borel: $\bb{T} \subseteq \bb{B} \subseteq \bb{G}$.
Let $\bb{W} = N_\bb{G}(\bb{T})/\bb{T}$.
We now take $W$ to be the finite Coxeter group $\bb{W}^F$.
Similarly, we write $G$, $B$, \emph{etc.}\ for the groups formed by the $F$-fixed points of $\bb{G}$, $\bb{B}$, \emph{etc.}

The $G$-invariant, $\bb{Z}[\tfrac{1}{q}]$-valued functions on $(G/B)^2$ form a convolution algebra $H_B^G$.
If $G$ is \dfemph{split}, meaning $W = \bb{W}$, then $H_B^G$ is the specialization at $\X \to q$ of the algebra $H_W$ presented earlier.
Explicitly, $T_w$ specializes to the indicator function on the set of pairs $(hB, gB)$ such that $Bh^{-1}gB = BwB$.
In \Cref{sec:hecke}, we review the presentation of $H_B^G$ for general $G$.

The \dfemph{Harish-Chandra transform} is the map $\HC_!$ from class functions on $G$ to invariant functions on $(G/B)^2$ given by pullback, then pushforward, through the diagram
\begin{align}
	G \xleftarrow{\pr_2} G/B \times G \xrightarrow{\act} G/B \times G/B,
\end{align}
where above,
\begin{align}
\act(hB, z) = (hB, zhB)
	\quad\text{and}\quad
\pr_2(hB, z) = z.
\end{align}
In the notation of \S\ref{subsec:functions}, $\HC_! \vcentcolon= \act_!\, \pr_2^\ast$.\footnote{Strictly speaking, our Harish-Chandra transform is Verdier dual to the one in \cite{ginzburg, bt}.}
A purely formal argument shows that $\HC_!$ takes values in the center of $H_B^G$ \cite[\S{9}]{ginzburg}.
It turns out that every central element arises this way, up to scaling \cite{lusztig_15}.

\subsection{Two Partial Resolutions}

\emph{In the rest of this introduction, we assume that $G$ is split, for simplicity.}
We take $S$ to be the system of simple reflections arising from $\bb{B}$.
Let $\bb{P}_J = \bb{B}W_J\bb{B}$, a parabolic subgroup of $\bb{G}$.
Let $\bb{U}_J$ be its unipotent radical and $\bb{V}_J$ the variety of all unipotent elements in $\bb{P}_J$.
At the level of points, the two \dfemph{partial Springer resolutions} of type $J$ are defined by
\begin{align}\begin{split}\label{eq:springer}
	\bb{Spr}_J^+ 
	&= \{(u, y\bb{P}_J) \in \bb{G} \times \bb{G}/\bb{P}_J \mid u \in y\bb{V}_Jy^{-1}\},\\
	\bb{Spr}_J^- 
	&= \{(u, y\bb{P}_J) \in \bb{G} \times \bb{G}/\bb{P}_J \mid u \in y\bb{U}_Jy^{-1}\}.
\end{split}\end{align}
If $J = \emptyset$, then $\bb{P}_J = \bb{B}$, giving $\bb{U}_J = \bb{V}_J$ and $\bb{Spr}_\emptyset^- = \bb{Spr}_\emptyset^+$. 
This is the original Springer resolution.
In general, the $+$ case is a partial resolution of singularities of the unipotent variety $\bb{V} \subseteq \bb{G}$, while the $-$ case is a resolution of the closure of the Richardson orbit for $J$.
It will be convenient to set $\bb{E}_J^\pm \vcentcolon= \bb{G}/\bb{B} \times \bb{Spr}_J^\pm$.

For any $G$-equivariant map $\pi : E \to X$, we write $\pi_! \sf{1}_E$ for the function on $X$ defined by $\pi_! \sf{1}_E(x) = |\pi^{-1}(x)|$.
Applying $\HC_!$ to the functions $\pr_{1, !} \sf{1}_{\mathit{Spr}_J^\pm}$, where $\mathit{Spr}_J^\pm = (\bb{Spr}_J^\pm)^F$, yields the functions $\mult_! \sf{1}_{E_J^\pm}$, where $E_J^\pm = (\bb{E}_J^\pm)^F$ and
\begin{align}
\mult(hB, u, yP_J) \vcentcolon= \act(hB, u) = (hB, uhB).
\end{align}
These are the elements that we will calculate in \Cref{sec:springer}.

Let $w_\circ$ and $w_{J\circ}$ respectively denote the longest elements of $W$ and $W_J$.
For convenience, we set $\ell_S = \ell(w_\circ)$ and $\ell_J = \ell(w_{J\circ})$.
Recall that $w_\circ, w_{J\circ}$ are involutions, and that $T_{w_{J\circ}}^2$ is central in $H_{W_J}$ \cite{bmr}.
The split case of our main result is:

\begin{thm}\label[thm]{thm:main}
For any $J \subseteq S$, we have
\begin{align} 
\mult_!\sf{1}_{E_J^-}
	= q^{\ell_S - \ell_J} N_J^S(1)|_{\X \to q}
	\quad\text{and}\quad
\mult_!\sf{1}_{E_J^+}
	= q^{\ell_S - \ell_J} N_J^S(T_{w_{J\circ}}^2)|_{\X \to q}.
\end{align} 
\end{thm}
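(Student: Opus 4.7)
The plan is to evaluate both sides as $G$-invariant functions on $(G/B)^2$ and compare them on $G$-orbit representatives. By equivariance, it suffices to test at pairs $(B, gB)$ for $g \in G$. Unwinding the definitions of $\mult$ and $E_J^\pm$ yields
\begin{equation*}
\mult_!\sf{1}_{E_J^\pm}(B, gB) = \sum_{yP_J \in G/P_J} |gB \cap y\bb{X}_J y^{-1}|,
\end{equation*}
with $\bb{X}_J = \bb{U}_J$ in the $-$ case and $\bb{X}_J = \bb{V}_J$ in the $+$ case. I would then stratify using the Bruhat decomposition $G/P_J = \bigsqcup_{v \in W^J} Bv^{-1}P_J/P_J$ (re-indexed via $v \mapsto v^{-1}$ to match the $W^J$ appearing in $N_J^S$), parametrizing each cell by a unipotent subgroup of order $q^{\ell(v)}$.

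The heart of the argument is to show that the cell indexed by $v$ contributes $q^{\ell_S - \ell_J - \ell(v)} \cdot (T_{v^{-1}} \epsilon_\pm T_v)(B, gB)$, where $\epsilon_- = 1$ and $\epsilon_+ = T_{w_{J\circ}}^2$; summing over $W^J$ then produces the right-hand side $q^{\ell_S - \ell_J} N_J^S(\epsilon_\pm)(B, gB)$. For the $-$ case, a conjugation argument exploiting the $\bb{P}_J$-invariance of $\bb{U}_J$ reduces the inner count $|gB \cap y\bb{U}_J y^{-1}|$ to a piece of the convolution of $T_{v^{-1}}$ with $T_v$ in $H_B^G$, and the overall scalar $q^{\ell_S - \ell_J} = |\bb{U}_J|$ appears transparently as the fiber size of $\bb{Spr}_J^- \to \bb{G}/\bb{P}_J$. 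Useful warmups: $J = \emptyset$ (the classical Springer resolution, with $T_{w_{J\circ}}^2 = 1$) and $J = S$ (where $W^J = \{1\}$ collapses the sum).

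The main obstacle is the $+$ case. Here $\bb{V}_J$ strictly contains $\bb{U}_J$, with the additional unipotent elements lifting those in the Levi quotient $L_J \cong \bb{P}_J/\bb{U}_J$. Using the Levi decomposition $\bb{P}_J = L_J \ltimes \bb{U}_J$, I would exhibit $\bb{V}_J$ as a trivial $\bb{U}_J$-bundle over the Levi unipotent variety $\bb{V}_{L_J}$, splitting the inner count into a Levi-Springer count times $|\bb{U}_J|$. The Levi-Springer count is in turn analyzed via the Springer resolution for $L_J$ and its Hecke-algebraic incarnation in $H_{W_J}$: the number of $L_J$-Borels containing a given unipotent in $L_J$, summed and reweighted along Bruhat cells, should assemble precisely into $T_{w_{J\circ}}^2$. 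Proving that these Springer fiber counts for $L_J$ recombine exactly into the central element $T_{w_{J\circ}}^2 \in Z(H_{W_J})$ is the technical crux; presumably it reduces, via the compatibility of Harish-Chandra induction with Levis, to a parabolic version of the full-Springer identity used in the $-$ case.
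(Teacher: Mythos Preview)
Your $-$ case is essentially the paper's approach: stratify $E_J^-$ by the Bruhat cells of $G/P_J$ indexed by $v \in W^{J,-}$, and on each stratum identify the fibers of $\mult$ as affine spaces of the correct dimension over $O(v^{-1},v)$. The paper packages this as a $U_{w_{J\circ}v}$-torsor structure on a lifted variety $\check{E}_{J,v}^-$ (\Cref{prop:minus-case}), but the content is the same Bruhat bookkeeping you describe.

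The gap is in the $+$ case. You propose to split $V_J \cong V_{L_J} \times U_J$ via the Levi decomposition and then invoke Springer theory for $L_J$ to produce $T_{w_{J\circ}}^2$. But the $J=\emptyset$ case of the theorem applied to $L_J$ yields the central element $\sum_{w \in W_J} q^{-\ell(w)} T_{w^{-1}} T_w$ in $H_{W_J}$, not $T_{w_{J\circ}}^2$; these already differ for $W_J$ of rank one. More structurally, the sum defining $\mult_!\sf{1}_{E_J^+}$ is over $G/P_J$, so there is no natural place for a sum over $L_J/B_J$ to enter, and peeling off the Levi unipotent factor does not introduce one. What you actually must control is $|yV_Jy^{-1} \cap BwB|$ for each $w$, and the paper does this by quoting Kawanaka's theorem \cite[Thm.~4.1]{kawanaka}, which it explicitly calls ``difficult'':
\[
|yV_Jy^{-1} \cap BwB| \;=\; |U_{v^{-1}}|\,\bigl|(w_{J\circ}v)^{-1} U_{w_{J\circ}v}^-\, w_{J\circ}v \cap BwB\bigr|.
\]
It is this identity, comparing against the open Bott--Samelson $O(v^{-1}, w_{J\circ}, w_{J\circ}, v)$, that makes $T_{w_{J\circ}}^2$ appear. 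Your ``parabolic version of the full-Springer identity'' would have to be replaced by Kawanaka's result; as the paper notes after the theorem statement, the authors do not expect an elementary geometric refinement in the $+$ case.
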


Let $W^{J, -} = W^J$ be the set of minimal-length representatives for the right cosets of $W_J$ in $W$, and by analogy, let $W^{J, +}$ be the set of \emph{maximal-length} representatives, so that multiplication by $w_{J\circ}$ interchanges $W^{J, -}$ with $W^{J, +}$.
Then the above identities of central elements can be rewritten as:
\begin{align}
\left.\begin{array}{r@{\:}l}
\mult_! \sf{1}_{E_J^-} 
	&= q^{\ell_S - \ell_J} \Sigma_{J, -}|_{\X \to q},\\
\mult_! \sf{1}_{E_J^+} 
	&= q^{\ell_S} \Sigma_{J, +}|_{\X \to q},
	\end{array}\right\} 
	\quad\text{where $\Sigma_{J, \pm} = \displaystyle\sum_{v \in W^{J, \pm}} \X^{-\ell(v)} T_{v^{-1}} T_{v\vphantom{^{-1}}}$}.
\end{align}
We emphasize that the $+$ case is deeper than the $-$ case.
The $-$ case only uses standard results about Bruhat decomposition.
Under the assumption that $G$ is split, we can refine it to an algebro-geometric statement about $\bb{E}_J^-$:
See \Cref{prop:minus-case}.
By contrast, the $+$ case relies on a difficult theorem of Kawanaka \cite{kawanaka}.
We expect that it can only be refined to a statement at the level of cohomology.
This issue is related to the refinement of Kawanaka's work discussed in \cite{trinh_duality}.

\begin{rem}\label[rem]{rem:prior}
The $-$ case of \Cref{thm:main} is related to several results in the literature, though the statements in both cases are new to the best of our knowledge.

In Grojnowski's thesis \cite{grojnowski}, the proof of Proposition 2.1 can be used to recover the $J = \emptyset$ case of \Cref{thm:main}.
See also (3.3.1) in \cite{grojnowski}.
Note that Grojnowski works with the image of $H_W$ in the \dfemph{monodromic Hecke algebra} of functions on $Y_\emptyset \vcentcolon= \bb{Y}_\emptyset^F$, where in general $\bb{Y}_J \vcentcolon= (\bb{G}/\bb{U}_J)^2/\bb{L}_J$ with $\bb{L}_J$ being the Levi factor of $\bb{P}_J$ acting from the right.

In \cite{lusztig_15}, Lusztig studies an endofunctor of the derived category of constructible mixed complexes on $(\bb{G}/\bb{B})^2$.
It categorifies $\HC_!\, \CH_!$ on a suitable subcategory, where $\CH_! \vcentcolon= \pr_{2,!}\, \act^\ast$ in the notation of \S\ref{subsec:functions}.
Lusztig's Proposition 2.6 shows that his functor sends any $K$ into the triangulated category generated by objects of the form
\begin{align} 
\bigoplus_{\ell(v) = k} K_{v^{-1}} \ast K \ast K_{v\vphantom{^{-1}}} \otimes \mathfrak{L}[\![k]\!],
\end{align}
where $K_w$ and $\mathfrak{L}[\![k]\!]$ categorify $\sf{1}_w$ and $|T| q^{-k}$, respectively.

In \cite[\S{6}]{bt}, Bezrukavnikov--Tolmachov study a functor that categorifies $\HC_!\, \chi_{P_J}^G$, where $\chi_{P_J}^G$ sends $G$-invariant functions on $Y_J \vcentcolon= \bb{Y}_J^F$ to class functions on $G$.
When $J = \emptyset$, their setup specializes to a monodromic version of Lusztig's setup.
In this sense, their Lemma 6.2.6 generalizes \cite[Prop.\ 2.6]{lusztig_15} to arbitrary $J$.
Its proof is similar to our proof of the $-$ case of \Cref{thm:main}, though not of the $+$ case.
\end{rem} 

\subsection{Traces}\label{subsec:trace-intro}

A \dfemph{trace} on an algebra is a linear map that vanishes on commutators.
We write $R(H_W)$ for the vector space of $\bb{Q}(\X)$-valued traces on $H_W$.
Our first application of \Cref{thm:main} is to identify certain elements of $R(H_W)$ arising from $\Sigma_{J, \pm}$.

Let $e \in W$ be the identity.
Let $\tau : H_W \to \bb{Z}[\X^{\pm 1}]$ be the trace given by $\tau(T_e) = 1$ and $\tau(T_w) = 0$ for all $w \neq e$.
Then any central element $\zeta \in Z(H_W)$ gives rise to a trace $\tau[\zeta] : H_W \to \bb{Z}[\X^{\pm 1}] \subseteq \bb{Q}(\X)$: namely, 
\begin{align} 
	\tau[\zeta](\beta) = \tau(\beta\zeta).
\end{align} 
These traces are best understood when $W$ is a symmetric group.

Let $S_n$, the symmetric group on $n$ letters, and let $\Lambda_n$ be the vector space of symmetric functions over $\bb{Q}(\X)$ of degree $n$ in variables $X \vcentcolon= (x_1, \ldots, x_n)$.
Then $R(H_{S_n})$ is isomorphic to $\Lambda_n$, as both of these vector spaces have bases indexed by the integer partitions of $n$.
Let $\FC_\X : R(H_{S_n}) \xrightarrow{\sim} \Lambda_n$ be the \dfemph{$\X$-deformed Frobenius characteristic} isomorphism that sends the irreducible character $\chi_\X^\lambda$ indexed by a partition $\lambda \vdash n$ to the Schur function $s_\lambda[X]$.

For $W = S_n$, we take $S = \{s_1, \ldots, s_{n - 1}\}$, where $s_i = (i, i + 1)$.
This choice sets up a bijection between subsets $J \subseteq S$ and integer \emph{compositions} $\nu$ of $n$.
Let $e_\nu[X]$ and $h_\nu[X]$ respectively denote the elementary and complete homogeneous symmetric functions in $\Lambda_n$ indexed by $\nu$.
Wan--Wang \cite{ww}, recasting work of Lascoux \cite{lascoux}, show that if $J$ corresponds to $\nu$, then
\begin{align}\begin{split}\label{eq:lascoux}
\FC_\X(\tau[\Sigma_{J, -}]) 
	&= (\X - 1)^n e_\nu\left[\frac{X}{\X - 1}\right],\\
\FC_\X(\tau[\Sigma_{J, +}]) 
	&= (\X - 1)^n h_\nu\left[\frac{X}{\X - 1}\right].
\end{split}\end{align}
Using these identities, they show that the relative norm maps $N_J^S$ from \S\ref{subsec:hoefsmit-scott} give rise to a ring structure on the direct sum of the centers $Z(\bb{Q}(\X) \otimes H_{S_n})$, isomorphic to the ring of symmetric functions over $\bb{Q}(\X)$.
While the result about centers is specific to the groups $S_n$, we will show that the identities \eqref{eq:lascoux} can be deduced from uniform formulas for the traces $\tau[\Sigma_{J, \pm}]$, which work for any Weyl group $W$.

Recall that Springer constructed a $W$-action on the \'etale cohomologies of the fibers of his resolution of the unipotent variety, now called \dfemph{Springer fibers}.
In \cite{trinh}, the first author used this action to construct a trace on $H_W$ valued in $\bb{Q}(\X)$-linear traces on $W$, or equivalently, a \emph{bitrace}
\begin{align} 
	\tau_G : \bb{Q}W \otimes H_W \to \bb{Q}(\X),
\end{align}
which refines the Markov traces on $H_W$ studied by Gomi \cite{gomi} and Webster--Williamson \cite{ww_markov}.
These, in turn, were motivated by the traces used by Ocneanu to construct the HOMFLYPT link polynomial \cite{jones-v}.

In this paper, we use a normalization for $\tau_G$ characterized by the formula
\begin{align}\label{eq:tau-g-intro}
\tau_G(z \otimes T_w)|_{\X \to q}
	= \frac{1}{|G|}
		\sum_{\substack{u \in G \\ \text{unipotent}}}
		|O(w)_u| \chi_u(z)
	\quad\text{for all $z, w \in W$},
\end{align}
where $\chi_u$ is the total Springer character for $u$, reviewed in \S\ref{subsec:springer}, and $O(w)_u$ is the set of pairs $(hB, gB)$ such that $h^{-1}gB = BwB$ and $gB = uhB$.
Let $e_{J, -}$, \emph{resp.}\ $e_{J, +}$, denote the antisymmetrizer, \emph{resp.}\ symmetrizer, in $\bb{Q}W_J$, reviewed in \S\ref{subsec:symmetrizer}.
By combining \Cref{thm:main} with work of Borho--MacPherson \cite{bm}, we show:

\begin{thm}\label[thm]{thm:trace}
For any $J \subseteq S$, we have
\begin{align}
\tau[\Sigma_{J, \pm}] 
	&= (\X - 1)^{\ur{rk}(G)}\,
	\tau_G(e_{J, \pm} \otimes \whitearg)
\end{align}
as traces on $H_W$, where $\ur{rk}(G)$ is the rank of the maximal torus $\bb{T}$.
\end{thm}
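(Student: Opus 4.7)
The plan is to verify the claimed equality at $\X = q$ for infinitely many prime powers $q$, matching both sides to the same sum of $\bb{F}_q$-point counts on partial Springer fibers, and to conclude by Zariski density in $\X$.

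First, applying \Cref{thm:main} gives $\Sigma_{J,\pm}|_{\X \to q} = q^{-c_\pm}\, \mult_!\, \sf{1}_{E_J^\pm}$ with $c_+ = \ell_S$ and $c_- = \ell_S - \ell_J$. Using $\tau(f) = f(B,B)$ on the convolution algebra $H_B^G$ and unwinding the convolution $T_w \ast \mult_!\, \sf{1}_{E_J^\pm}$ at $(B,B)$, I would obtain
\begin{align*}
\tau[\Sigma_{J,\pm}](T_w)\big|_{\X \to q}
= q^{-c_\pm} \sum_{\substack{u \in B w^{-1} B \\ u \text{ unipotent}}} |\mathit{Spr}_J^{\pm, u}(\bb{F}_q)|,
\end{align*}
where $\mathit{Spr}_J^{\pm, u}$ denotes the fiber of $\mathit{Spr}_J^\pm \to G$ over $u$. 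For the right-hand side, a standard $G$-orbit summation starting from \eqref{eq:tau-g-intro} gives
\begin{align*}
(\X - 1)^{\ur{rk}(G)}\, \tau_G(e_{J,\pm} \otimes T_w)\big|_{\X \to q}
= q^{-\ell_S} \sum_{\substack{u \in B w^{-1} B \\ u \text{ unipotent}}} \chi_u(e_{J,\pm})\big|_{\X \to q},
\end{align*}
where $(\X - 1)^{\ur{rk}(G)}$ is absorbed against $|B|^{-1} = q^{-\ell_S}(q-1)^{-\ur{rk}(G)}$.

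The heart of the argument is then to bridge these two computations via the Borho--MacPherson decomposition, applied to the factorizations of the Springer resolution through $\mathit{Spr}_J^\pm$. After tracking gradings and sign conventions, this should supply the key identities
\begin{align*}
\chi_u(e_{J,+})\big|_{\X \to q} = |\mathit{Spr}_J^{+, u}(\bb{F}_q)|,
\qquad
\chi_u(e_{J,-})\big|_{\X \to q} = q^{\ell_J}\, |\mathit{Spr}_J^{-, u}(\bb{F}_q)|,
\end{align*}
the extra $q^{\ell_J}$ in the minus case coming from the sign-isotypic of $H^\ast(L_J/B_{L_J})$ sitting in top degree $2\ell_J$. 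Substituting matches the prefactors in both cases --- directly for $+$, and after absorbing $q^{\ell_J}$ for $-$ --- so the identity holds after every good specialization, and Zariski density in $\X$ concludes.

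The main obstacle is this Borho--MacPherson step: the asymmetric $2\ell_J$-shift between the two cases, and aligning Springer-- vs.\ BM--sign conventions correctly so that the plus idempotent pairs with $\mathit{Spr}_J^+$ while the minus idempotent pairs with $\mathit{Spr}_J^-$. This asymmetry parallels the deeper character of the $+$ case of \Cref{thm:main}, where Kawanaka's theorem is needed, versus the $-$ case, which follows from standard Bruhat decomposition.
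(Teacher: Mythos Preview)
Your proposal is correct and follows essentially the same route as the paper: specialize to $q$, use \Cref{thm:main} to identify $\Sigma_{J,\pm}$ with $\mult_!\sf{1}_{E_J^\pm}$ up to a power of $q$, invoke the Borho--MacPherson identities $\chi_u(e_{J,+}) = |\mathit{Spr}_{J,u}^+|$ and $\chi_u(e_{J,-}) = q^{\ell_J}|\mathit{Spr}_{J,u}^-|$, match prefactors, and lift by the infinitely-many-$q$ argument. The paper organizes the intermediate bookkeeping slightly differently---summing over pairs $(hB,gB) \in O(w)$ rather than over unipotent $u \in Bw^{-1}B$ (see Lemmas~\ref{lem:tau-norm} and~\ref{lem:tau-g})---but this is only a reindexing; the substance is identical.

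Two small clarifications. First, the Borho--MacPherson step you flag as ``the main obstacle'' is not reproved in the paper; it is quoted directly from \cite{bm}, with the conventions fixed once in \S\ref{subsec:springer} (the perverse-sheaf Springer action, so that the sign character appears only in $\chi_1$) and the $q^{\ell_J}$ shift attributed to the Tate twist in case~(b) of \cite[\S3.4]{bm}. Second, your closing remark that the asymmetry ``parallels the deeper character of the $+$ case of \Cref{thm:main}'' slightly conflates two different asymmetries: the Kawanaka input makes the $+$ case of \Cref{thm:main} harder, but once \Cref{thm:main} is in hand, both cases of \Cref{thm:trace} are equally easy---the $q^{\ell_J}$ discrepancy here comes entirely from the Borho--MacPherson side, not from anything Kawanaka-related.
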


From \cite{trinh}, there is a purely algebraic formula for $\tau_G$ involving the \dfemph{exotic Fourier transform}: a pairing introduced by Lusztig to relate the set $\Irr(W)$ of irreducible characters of $W$ to the set of (unipotent) irreducible characters of $G$.
Let 
\begin{align} 
\{-, -\} : \Irr(W) \times \Irr(W) \to \bb{Q}
\end{align}
be its ``truncation'' to $\Irr(W)$, and for all $\chi \in \Irr(W)$, let $\chi_\X \in R(H_W)$ be the Tits deformation of $\chi$.
We deduce the following formula.
For $G = \ur{GL}_n(\bb{F})$, where $\{-, -\}$ is the Kronecker delta, it recovers \eqref{eq:lascoux}, as we show in \Cref{prop:tau-to-e-h}.

\begin{cor}\label[cor]{cor:exotic}
For any $J \subseteq S$, we have
\begin{align}
\tau[\Sigma_{J, \pm}]
=
(\X - 1)^{\ur{rk}(G)}
\sum_{\chi, \psi \in \Irr(W)}
	\frac{\{\chi, \psi\} \psi(e_{J, \pm})}{\det(\X - e_{J, \pm} \mid \sf{V}_G)}\,\chi_\X,
\end{align}
where $\sf{V}_G$ is the reflection representation of $W$ arising from the cocharacters of $\bb{T}$.
\end{cor}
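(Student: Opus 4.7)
The plan is to combine \Cref{thm:trace} with the exotic Fourier expansion of the bitrace $\tau_G$ established in \cite{trinh}. By \Cref{thm:trace}, the corollary reduces to the identity
\begin{equation}
\tau_G(e_{J, \pm} \otimes \whitearg)
 = \sum_{\chi, \psi \in \Irr(W)} \frac{\{\chi, \psi\}\, \psi(e_{J, \pm})}{\det(\X - e_{J, \pm} \mid \sf{V}_G)}\, \chi_\X
\end{equation}
in $R(H_W)$, which I would obtain by specializing the master formula of \cite{trinh} at the idempotent $z = e_{J, \pm} \in \bb{Q}W$ and then multiplying through by $(\X - 1)^{\ur{rk}(G)}$.

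In that master formula, $\tau_G(z \otimes \beta)$ is expanded in the basis $\{\chi_\X\}_{\chi \in \Irr(W)}$ of $R(H_W)$, with coefficients that pair $\psi \in \Irr(W)$ against $z$ via the exotic Fourier transform $\{\chi, \psi\}$. The numerator $\psi(e_{J, \pm}) = \dim \psi^{W_J, \pm}$ appears immediately as a character value at the symmetrizer, \emph{resp.}\ antisymmetrizer. The denominator $\det(\X - e_{J, \pm} \mid \sf{V}_G)$ arises from the Springer normalization built into \eqref{eq:tau-g-intro}: by Borho--MacPherson, the action of $\bb{Q}W$ on the top cohomology of a Springer fiber is controlled by its action on the reflection representation $\sf{V}_G$, so the natural polynomial denominator governing the contribution of a $\bb{Q}W$-element to $\tau_G$ is its characteristic polynomial on $\sf{V}_G$. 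For $G = \ur{GL}_n(\bb{F})$, $\{\chi,\psi\}$ collapses to $\delta_{\chi, \psi}$ and the formula reduces to \eqref{eq:lascoux}, as \Cref{prop:tau-to-e-h} will verify.

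The main obstacle is the collapse of denominators. The master formula in \cite{trinh} most naturally presents $\tau_G(z \otimes \beta)$ as a sum over unipotent classes of $G$, each summand carrying its own characteristic polynomial factor in the denominator. Rewriting the specialization at $z = e_{J, \pm}$ as a single rational expression with denominator $\det(\X - e_{J, \pm} \mid \sf{V}_G)$ requires exploiting the fact that $e_{J, \pm}$ is a projection on $\sf{V}_G$ with eigenvalues in $\{0, 1\}$, so that its characteristic polynomial is a monomial in $\X$ and $\X - 1$ controllable by $\dim \sf{V}_G^{W_J, \pm}$. The cleanest route is probably to verify the identity on the conjugacy class basis of $\bb{Q}W$, where $\det(\X - w \mid \sf{V}_G)$ enters through the Springer character of a unipotent associated (via Springer correspondence) to the class of $w$, and then extend linearly to the idempotents $e_{J, \pm}$.
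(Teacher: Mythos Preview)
Your first two paragraphs are exactly the paper's argument: reduce via \Cref{thm:trace} to an expression for $\tau_G(e_{J,\pm}\otimes\whitearg)$, then quote the character formula from \cite{trinh} and substitute $z=e_{J,\pm}$. That is the whole proof in the paper; see \S\ref{subsec:exotic}, where the identity \eqref{eq:tau-g-to-exotic}
\[
\tau_G(z\otimes\whitearg)=\sum_{\chi,\psi\in\Irr(W)}\frac{\{\chi,\psi\}\,\psi(z)}{\det(\X-z\mid\sf{V}_G)}\,\chi_\X
\]
is recorded as holding for any $z\in\bb{Q}W$, and then one line says ``Combining this with \Cref{thm:trace} gives \Cref{cor:exotic}.''

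Your third paragraph invents an obstacle the paper does not recognize. You describe the ``master formula'' from \cite{trinh} as a sum over unipotent classes, each with its own characteristic-polynomial denominator, which then must be ``collapsed'' using that $e_{J,\pm}$ is a projection. But that is not the shape of the formula being invoked: the paper takes \eqref{eq:tau-g-to-exotic} as already having the single denominator $\det(\X-z\mid\sf{V}_G)$, so the substitution $z=e_{J,\pm}$ is immediate and the route through the conjugacy-class basis you propose is unnecessary. Your side remarks about Borho--MacPherson governing the denominator and about the $\GL_n$ specialization are also not part of the argument here; the denominator is already packaged into the cited formula, and the $\GL_n$ reduction is handled separately in \Cref{prop:tau-to-e-h}. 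In short: drop the third paragraph, and what remains is the paper's proof.
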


\subsection{Deodhar Decompositions}\label{subsec:cell-intro}

Our second application of \Cref{thm:main} is to provide point-counting formulas for new kinds of braid varieties through Deodhar-like decompositions.
In what follows, $h\bb{B} \xrightarrow{w} g\bb{B}$ will mean $\bb{B}h^{-1}g\bb{B} = \bb{B}w\bb{B}$.

Let $\vec{s} = (s^{(1)}, s^{(2)}, \ldots, s^{(\ell)})$ be a word in $S$.
Recall that in \cite{deodhar}, Deodhar showed how to decompose a certain \dfemph{Richardson variety} for $\vec{s}$ into subvarieties of the form $\bb{A}^\mathbf{d} \times \bb{G}_m^\mathbf{e}$, now called \dfemph{Deodhar cells}.
As in \cite{gltw},\footnote{Note that we implicitly fix a typo in \cite[(1.7)]{gltw}.} we will work with a definition depending on an element $v \in W$:
\begin{align}
	\bb{R}^{(v)}(\vec{s})
	= \{\vec{g}\bb{B} = (g_i\bb{B})_i \in (\bb{G}/\bb{B})^\ell \mid vw_\circ \bb{B} \xrightarrow{s^{(1)}} g_1\bb{B} \xrightarrow{s^{(2)}} \cdots \xrightarrow{s^{(\ell)}} g_\ell\bb{B} \xleftarrow{vw_\circ} \bb{B}\}.
\end{align}
To describe the cell decomposition, recall that a \dfemph{subword} of $\vec{s}$ is a sequence $\vec{\omega}$ of the same length $\ell$ with $\omega^{(i)} \in \{e, s^{(i)}\}$ for all $i$.
We set $\omega_{(i)} \vcentcolon= \omega^{(1)} \cdots \omega^{(i)}$.
For any $v \in W$, a \dfemph{$v$-distinguished subword} of $\vec{s}$ is a subword $\vec{\omega}$ such that 
\begin{align} 
	v\omega_{(i)}  \leq v\omega_{(i - 1)} s^{(i)}
	\quad\text{for all $i$}.
\end{align}
Let $\mathcal{D}^{(v)}(\vec{s})$ be the set of $v$-distinguished subwords $\vec{\omega}$ for which $\smash{\omega_{(\ell)}} = e$.
Then the Deodhar cells of $\bb{R}^{(v)}(\vec{s})$ are indexed by $\mathcal{D}^{(v)}(\vec{s})$.
The cell for a given element $\vec{\omega}$ is isomorphic to $\bb{A}^{\sf{d}_{\vec{\omega}}} \times \bb{G}_m^{\sf{e}_{\vec{\omega}}}$ for certain disjoint subsets $\sf{d}_{\vec{\omega}}, \sf{e}_{\vec{\omega}} \subseteq \{1, 2, \ldots, \ell\}$, allowing us to count $R^{(v)}(\vec{s}) \vcentcolon= \bb{R}^{(v)}(\vec{s})^F$:
\begin{align}\label{eq:cell-r-s}
|R^{(v)}(\vec{s})| 
	= \sum_{\vec{\omega} \in \mathcal{D}^{(v)}(\vec{s})}
	q^{|\sf{d}_{\vec{\omega}}|} 
	(q - 1)^{|\sf{e}_{\vec{\omega}}|}.
\end{align}
We give further detail in \Cref{sec:cell}.

Using \Cref{thm:main}, we relate the disjoint union of the sets $R^{(v)}(\vec{s})$ for $v \in W^{J, \mp}$ to the set $Z_J^\pm(\vec{s}) \vcentcolon= \bb{Z}_J^\pm(\vec{s})^F$ for a certain \dfemph{partial braid Steinberg variety}
\begin{align}
\bb{Z}_J^\pm(\vec{s})
	= \{(\vec{g}\bb{B}, u, y\bb{P}_J) \in (\bb{G}/\bb{B})^\ell \times \bb{Spr}_J^\pm \mid 
	u^{-1}g_\ell\bb{B} \xrightarrow{s^{(1)}} g_1\bb{B} \xrightarrow{s^{(2)}} \cdots \xrightarrow{s^{(\ell)}}  g_\ell\bb{B}
	\}.
\end{align}
We obtain identities of point counts.

\begin{thm}\label[thm]{thm:cell}
For any word $\vec{s}$, we have
\begin{align}
\frac{|Z_J^-(\vec{s})|}{|G|}
	&=
	\frac{1}{q^{\ell_J} (q - 1)^{\ur{rk}(G)}}
	\sum_{v \in W^{J, +}} 
	\sum_{\vec{\omega} \in \mathcal{D}^{(v)}(\vec{s})}
	q^{|\sf{d}_{\vec{\omega}}|} 
	(q - 1)^{|\sf{e}_{\vec{\omega}}|},\\
\frac{|Z_J^+(\vec{s})|}{|G|}
	&= 
	\frac{1}{(q - 1)^{\ur{rk}(G)}}
	\sum_{v \in W^{J, -}} 
	\sum_{\vec{\omega} \in \mathcal{D}^{(v)}(\vec{s})}
	q^{|\sf{d}_{\vec{\omega}}|} 
	(q - 1)^{|\sf{e}_{\vec{\omega}}|}.
\end{align}
(Note the sign flip between the left and right sides of each identity.)
\end{thm}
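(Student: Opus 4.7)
The plan is to unpack $|Z_J^\pm(\vec{s})|$ as a pairing in the convolution algebra $H_B^G$, apply \Cref{thm:main} to rewrite $\mult_!\sf{1}_{E_J^\pm}$ in terms of $\Sigma_{J, \pm}$, and then identify each summand with a Richardson count. Setting $T_{\vec{s}} \vcentcolon= T_{s^{(1)}} T_{s^{(2)}} \cdots T_{s^{(\ell)}} \in H_B^G$ and writing $hB \vcentcolon= u^{-1} g_\ell B$ for the starting flag of the $\vec{s}$-path in the definition of $\bb{Z}_J^\pm(\vec{s})$, a direct enumeration yields
\begin{align*}
|Z_J^\pm(\vec{s})| = \sum_{(hB, gB)} T_{\vec{s}}(hB, gB) \cdot \mult_!\sf{1}_{E_J^\pm}(hB, gB),
\end{align*}
since for fixed $(hB, gB)$ the first factor counts the completions $(g_1, \ldots, g_{\ell-1})$ of the path from $hB$ to $gB$, and the second counts the pairs $(u, yP_J) \in \mathit{Spr}_J^\pm$ with $uhB = gB$.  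By \Cref{thm:main}, $\mult_!\sf{1}_{E_J^-} = q^{\ell_S - \ell_J}\,\Sigma_{J, -}|_{\X \to q}$ and $\mult_!\sf{1}_{E_J^+} = q^{\ell_S}\,\Sigma_{J, +}|_{\X \to q}$; expanding $\Sigma_{J, \pm} = \sum_{v \in W^{J, \pm}} \X^{-\ell(v)} T_{v^{-1}} T_v$ then reduces the calculation to summing over $v \in W^{J, \pm}$.

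The core of the argument is the geometric identity
\begin{align*}
\sum_{(hB, gB)} T_{\vec{s}}(hB, gB) \cdot (T_{v^{-1}} T_v)(hB, gB) = |G/B| \cdot q^{\ell(v)} \cdot |R^{(vw_\circ)}(\vec{s})| \quad \text{for every $v \in W$.}
\end{align*}
Expanding $(T_{v^{-1}} T_v)(hB, gB) = \#\{zB : zB \xrightarrow{v} hB,\, zB \xrightarrow{v} gB\}$, the left side counts tuples $(hB, zB, g_1, \ldots, g_\ell)$ such that $zB$ is a $v$-neighbor of both $hB$ and $g_\ell B$ and there is an $\vec{s}$-path $hB \to g_1 \to \cdots \to g_\ell B$.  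The diagonal $G$-action on flags is transitive with stabilizer $B$, so normalizing $zB = eB$ contributes a factor of $|G/B|$.  The conditions now read $hB, g_\ell B \in BvB/B$.  Since $B$ acts transitively on the Schubert cell $BvB/B$ of $q^{\ell(v)}$ elements, normalizing $hB = vB$ contributes a further factor of $q^{\ell(v)}$.  The remaining count $|\{(g_1, \ldots, g_\ell) : vB \to g_1 \to \cdots \to g_\ell \in BvB\}|$ matches $|R^{(vw_\circ)}(\vec{s})|$ upon rewriting $vB = (vw_\circ) w_\circ B$ and $BvB = B(vw_\circ) w_\circ B$ in the definition of $\bb{R}^{(vw_\circ)}(\vec{s})$.

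Combining yields $|Z_J^-(\vec{s})| = q^{\ell_S - \ell_J} |G/B| \sum_{v \in W^{J, -}} |R^{(vw_\circ)}(\vec{s})|$ and $|Z_J^+(\vec{s})| = q^{\ell_S} |G/B| \sum_{v \in W^{J, +}} |R^{(vw_\circ)}(\vec{s})|$; dividing by $|G| = |G/B| \cdot q^{\ell_S}(q-1)^{\ur{rk}(G)}$ produces the claimed prefactors.  The sign flip arises from the bijection $W^{J, \pm} \xrightarrow{\sim} W^{J, \mp}$ given by $v \mapsto vw_\circ$, a bijection because $\ell(svw_\circ) = \ell_S - \ell(sv)$ interchanges the left-descent conditions that define $W^{J, -}$ and $W^{J, +}$.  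Reindexing $v' \vcentcolon= vw_\circ$ replaces $\sum_{v \in W^{J, \pm}} |R^{(vw_\circ)}(\vec{s})|$ by $\sum_{v' \in W^{J, \mp}} |R^{(v')}(\vec{s})|$, and expanding each $|R^{(v')}(\vec{s})|$ via Deodhar's decomposition \eqref{eq:cell-r-s} delivers the stated double sum over distinguished subwords.  The main obstacle is the geometric identity in the middle paragraph: each reduction is elementary in isolation, but they must be tracked against the asymmetric definition of $\bb{R}^{(v)}(\vec{s})$ — fixed starting flag $vw_\circ B$ and Bruhat constraint on $g_\ell B$ — carefully enough to land at the clean form $|R^{(vw_\circ)}(\vec{s})|$ without spurious factors.
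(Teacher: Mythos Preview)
Your proof is correct and follows essentially the same route as the paper. Both arguments express $|Z_J^\pm(\vec{s})|$ as a pairing of $T_{\vec{s}}$ against $\mult_!\sf{1}_{E_J^\pm}$, invoke \Cref{thm:main} (or its per-stratum refinement \Cref{thm:main-v}) to produce the sum $\sum_v \X^{-\ell(v)}T_{v^{-1}}T_v$, identify each summand with $|R^{(vw_\circ)}(\vec{s})|$, and reindex via $v\mapsto vw_\circ$ before expanding by Deodhar.

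The only packaging difference is that the paper introduces the intermediate $\bb{G}$-variety $\bb{X}^{(v)}(\vec{s})$ and derives the key identity from the change-of-structure-group isomorphisms of \Cref{cor:action-s}, whereas you obtain the same count by a direct orbit argument (fix $zB=\bb{B}$ by $G$-transitivity, then $hB=vB$ by $B$-transitivity on the Schubert cell). In your middle step, the phrase ``contributes a further factor of $q^{\ell(v)}$'' is justified not because $B$ acts freely but because the $B$-equivariant projection to $BvB/B$ has fibers of constant size; this is implicit in your argument and is exactly what the paper's \Cref{cor:action-s} encodes. The paper's route additionally yields the per-stratum identities of \Cref{prop:z-x-s} and the cohomological refinement of \Cref{cor:cohomology}, which your more direct argument does not produce, but for the theorem as stated the two approaches are equivalent.
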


Note that $\bb{Z}_\emptyset^+(\vec{s})$ and $\bb{Z}_\emptyset^-(\vec{s})$ coincide:
They match the \dfemph{braid Steinberg variety} introduced in \cite{trinh}.
At the other extreme, $\bb{Z}_S^+(\vec{s})$ and $\bb{Z}_S^-(\vec{s})$ are the varieties respectively denoted $\cal{U}(\vec{s})$ and $\cal{X}(\vec{s})$ in \cite{trinh}.

For general $G$ and $J$, we give a decomposition of $\bb{Z}_J^-(\vec{s})$ into subvarieties that become equivalent to Deodhar cells under change of structure group:
See \Cref{cor:cohomology}.
For $G = \PGL_n(\bb{F})$, the variety $\cal{X}(\vec{s})$ was previously studied by Shende--Treumann--Zaslow \cite{stz}, who used contact geometry to construct what they call a ruling decomposition of $\cal{X}(\vec{s})$.
The recent work \cite{achllw} essentially shows that their ruling decomposition becomes the Deodhar decomposition under change of structure group, and hence, matches our own decomposition:
See \Cref{rem:stz}.

The decomposition of partial Steinberg varieties into Richardson varieties in our proof of \Cref{thm:cell} also resembles a decomposition appearing in Mellit's work on character varieties with semisimple monodromy conditions \cite{mellit}, as we discuss in \Cref{rem:mellit}.

\subsection{Combinatorics}\label{subsec:parking-intro}

Our third application of \Cref{thm:main}, by way of \Cref{thm:trace}, is to construct noncrossing sets of interest in the Catalan combinatorics of $(W, S)$.
\emph{In the rest of this introduction, $W$ is irreducible with Coxeter number $h$ and rank $r \vcentcolon= |S|$.}

Let $d_1, \ldots, d_r$ be the fundamental degrees of the action of $W$ on its (irreducible) reflection representation.
For each $i$, let $e_i = d_i - 1$.
For any positive integer $p$ coprime to $h$, the \dfemph{rational Catalan number} of $(W, p)$ is
\begin{align}
	\Cat_{W, p}
	\vcentcolon= \prod_i \frac{p + e_i}{d_i},
\end{align}
while the \dfemph{rational parking number} of $(W, p)$ is $p^r$.
These numbers enumerate disparate families of combinatorial objects, which fall into two kinds.
\emph{Nonnesting} families are constructed from root-theoretic data that generalize nonnesting partitions, \emph{resp.}\ parking functions.
In \cite{gltw}, we gave the first construction of a rational \emph{noncrossing} Catalan, \emph{resp.}\ parking, family for any finite Coxeter group $W$ and $p > 0$ coprime to $h$, recovering earlier constructions for $p = h + 1$.
These noncrossing objects differ from the nonnesting objects in that they depend on a chosen ordering of $S$, or \dfemph{Coxeter word}: a Coxeter-theoretic, not root-theoretic, datum.

For any word $\vec{s}$ in $S$, let $\cal{M}^{(v)}(\vec{s}) \subseteq \cal{D}^{(v)}(\vec{s})$ be the subset of elements $\vec{\omega}$ such that $|\mathbf{e}_{\vec{\omega}}| = r$, the minimum possible value \cite[Cor.\ 4.9]{gltw}.
Let $\vec{c}$ be a Coxeter word for $(W, S)$, and $\vec{c}^p$ its $p$-fold concatenation.
The main results of \cite{gltw} are the identities
\begin{align}
	\Cat_{W, p} = |\cal{M}^{(e)}(\vec{c}^p)|
	\quad\text{and}\quad
	p^r = \sum_{v \in W} |\cal{M}^{(v)}(\vec{c}^p)|,
\end{align}
proved by way of $\X$-deformed identities involving $\cal{D}^{(v)}(\vec{c}^p)$ and taking $\X \to 1$.

In \Cref{sec:parking}, we prove an identity that interpolates between the two above.
Let $d_1^J, \ldots, d_{|J|}^J$ be the fundamental degrees of $W_J$.
Let $e_1^J,\ldots, e_{|J|}^J$ be the exponents of the $W_J$-action on the reflection representation of $W$.
We define the \dfemph{rational parabolic parking numbers} of $(W, p, J)$ to be
\begin{align}
	\Park_{W, p}^{J, \pm}
	= \prod_i \frac{p \pm e_i^J}{d_i^J}.
\end{align}
Then $\Park_{W, p}^{S, +} = \Cat_{W, p}$ and $\Park_{W, p}^{\emptyset, +} = \Park_{W, p}^{\emptyset, -} = p^r$.
We relate these numbers to $\tau_G$ via a result from \cite{trinh}, which describes $\tau_G(\whitearg \otimes T_{\vec{c}}^p)$ for a certain $T_{\vec{c}} \in H_W$ as the graded character of a \dfemph{rational parking space} for $(W, p)$, in the sense of \cite{arr} and \cite{alw}.
Ultimately, we obtain:

\begin{cor}\label[cor]{cor:parking}
For any Coxeter word $\vec{c}$, integer $p > 0$ coprime to $h$, and subset $J \subseteq S$, we have
\begin{align}
\Park_{W, p}^{J, \pm}
	= \sum_{v \in W^{J, \mp}} |\cal{M}^{(v)}(\vec{c}^p)|.
\end{align}
(Note the sign flip.)
That is, $\coprod_{v \in W^{J, \pm}} \cal{M}^{(v)}(\vec{c}^p)$ is a $\vec{c}$-noncrossing set enumerated by the $\mp$-rational parabolic parking number of $(W, p, J)$.
\end{cor}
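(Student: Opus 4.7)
The plan is to extract the corollary from Theorem \ref{thm:cell} by specializing to $\vec{s} = \vec{c}^p$ and passing to $q \to 1$, then identify the resulting limit with $\Park_{W,p}^{J,\pm}$ using Theorem \ref{thm:trace} and the parking space theorem of \cite{trinh}. Specializing the $+$ identity of Theorem \ref{thm:cell} to $\vec{s} = \vec{c}^p$ gives
\begin{equation*}
\frac{|Z_J^+(\vec{c}^p)|}{|G|} = \frac{1}{(q-1)^r} \sum_{v \in W^{J,-}} \sum_{\vec{\omega} \in \cal{D}^{(v)}(\vec{c}^p)} q^{|\sf{d}_{\vec{\omega}}|} (q-1)^{|\sf{e}_{\vec{\omega}}|}.
\end{equation*}
The inequality $|\sf{e}_{\vec{\omega}}| \geq r$ from \cite[Cor.~4.9]{gltw}, with equality exactly when $\vec{\omega} \in \cal{M}^{(v)}(\vec{c}^p)$, kills every non-minimal term in the limit, leaving
\begin{equation*}
\lim_{q \to 1} \frac{|Z_J^+(\vec{c}^p)|}{|G|} = \sum_{v \in W^{J,-}} |\cal{M}^{(v)}(\vec{c}^p)|.
\end{equation*}
The same reasoning applied to the $-$ identity (where the extra factor $q^{\ell_J} \to 1$) yields $\sum_{v \in W^{J,+}} |\cal{M}^{(v)}(\vec{c}^p)|$.

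To identify each limit with the correct parabolic parking number, I pass to the Hecke trace. Applied at $\beta = T_{\vec{c}^p}$, Theorem \ref{thm:trace} rewrites
\begin{equation*}
\tau(T_{\vec{c}^p}\, \Sigma_{J,\pm}) = (\X - 1)^r\, \tau_G(e_{J,\pm} \otimes T_{\vec{c}^p}),
\end{equation*}
and combining this with Theorem \ref{thm:main}---or equivalently reading off the derivation of Theorem \ref{thm:cell}---identifies $|Z_J^\pm(\vec{c}^p)|/|G|$ with $\tau_G(e_{J,\pm} \otimes T_{\vec{c}^p})|_{\X \to q}$, the factor $(\X - 1)^r$ exactly cancelling the normalization $(q-1)^{-r}$ in Theorem \ref{thm:cell}. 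Invoking the result from \cite{trinh} that $\tau_G(\whitearg \otimes T_{\vec{c}}^p)$ is the graded character of the rational parking space $\Park_{W,p}$, the specialization at $\X = 1$ of $\tau_G(e_{J,\pm} \otimes T_{\vec{c}^p})$ equals the ungraded dimension of the $W_J$-invariant, respectively $W_J$-antiinvariant, subspace of $\Park_{W,p}$.

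Closing the argument requires matching these $W_J$-isotypic dimensions with $\Park_{W,p}^{J,\pm} = \prod_i (p \pm e_i^J)/d_i^J$ by a Shephard--Todd style Hilbert series calculation for the action of $W_J$ on the reflection representation $\sf{V}_G$. Evaluating the $W_J$-(anti)invariant Hilbert series of the rational parking space at the grading variable equal to $1$ should return the stated products directly, and the sign flip between $\pm$ on the parking side and $\mp$ on the subword side propagates automatically from the sign flip already built into Theorem \ref{thm:cell}. I expect this final Hilbert series identification to be the main obstacle: one must carefully track how the possibly reducible restriction $\sf{V}_G|_{W_J}$ produces the correct degrees $d_i^J$ and exponents $e_i^J$, together with the sign twist distinguishing the antisymmetrizer $e_{J,-}$. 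Once this bookkeeping is settled, the corollary follows by direct assembly of the results above.
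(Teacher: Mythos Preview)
Your approach is essentially the paper's: both sides are identified with $\tau_G(e_{J,\pm}\otimes T_{\vec{c}}^p)$, the Deodhar side via \Cref{thm:trace} together with \eqref{eq:gltw-generic} (equivalently, via \Cref{thm:cell} as you do), and the parking side via the result of \cite{trinh} that $\tau_G(\whitearg\otimes T_{\vec{c}}^p)$ is the graded character of $\sf{L}_{p/h}$.

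Two remarks. First, the paper actually proves the stronger $\X$-deformed identity (\Cref{thm:parking-q}) and only then takes $\X\to 1$; this is cleaner than reasoning about limits $q\to 1$ of point counts $|Z_J^\pm(\vec{c}^p)|/|G|$, which are only defined at prime powers. Second, the step you flag as the ``main obstacle'' is precisely \Cref{prop:parking-to-rca}: it is handled not by a bare Shephard--Todd computation but by invoking Orlik--Solomon \cite{os} to show that $(\sf{S}\otimes\bigwedge\sf{V}_p^\ast(-p))^{W_J}$ is still an exterior algebra over $\sf{S}^{W_J}$, which yields the product $\prod_i(1+t\X^{p+e_i^J})/(1-\X^{d_i^J})$ and hence the desired identification after $t\to -1$. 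The sign twist between $e_{J,+}$ and $e_{J,-}$ is then automatic.
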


\subsection{Markov Traces and Kirkman Numbers}\label{subsec:markov-intro}

In \Cref{sec:markov}, we prove results about Markov traces and rational Kirkman numbers in type $A$ that are respectively parallel to \Cref{thm:trace} and \Cref{cor:parking}.
In \Cref{sec:associahedron}, we explain how our noncrossing objects for Kirkman numbers can be related to the classical combinatorics of associahedra.

First, for any $v \in W$, recall the \dfemph{left ascent set} $\Asc(v) = \{s \in S \mid \ell(sv) > \ell(v)\}$ and \dfemph{descent set} $\Des(v) = \{s \in S \mid \ell(sv) < \ell(v)\}$.
Observe that $W^{J, -}$, \emph{resp.}\ $W^{J, +}$, consists of those $v$ such that $\Asc(v) \supseteq J$, \emph{resp.}\ $\Des(v) \supseteq J$.
Hence, $N_J^S(1)$ and $\X^{-\ell_J} N_J^S(T_{w_{J\circ}}^2)$ decompose as sums, over supersets $I \supseteq J$, of elements
\begin{align}
	\zeta_I^- \vcentcolon= \sum_{\Asc(v) = I} \X^{-\ell(v)} T_{v^{-1}} T_{v\vphantom{^{-1}}}
	\quad\text{and}\quad
	\zeta_I^+ \vcentcolon= \sum_{\Des(v) = I} \X^{-\ell(v)} T_{v^{-1}} T_{v\vphantom{^{-1}}}.
\end{align}
Note that $\zeta_S^- = \zeta_\emptyset^+ = 1$ and $\zeta_\emptyset^- = \zeta_S^+ = \X^{-\ell_S} T_{w_\circ}^2$.
By inclusion-exclusion, the elements $\zeta_I^\pm$ are again central in $H_W$.

\begin{quest}
	For general $W$ and $I$, is there a more familiar description of the traces on $H_W$ of the form $\tau[\zeta_I^\pm]$?
\end{quest}

We now take $W = S_n$ and $S = \{s_1, \ldots, s_{n - 1}\}$.
The HOMFLYPT Markov trace on $H_{S_n}$ can be written as a $\bb{Q}(\X^{1/2})[a^{\pm 1}]$-valued trace $\mu_n$.
For $0 \leq k \leq n - 1$, let $\mu_n^{(k)} : H_W \to \bb{Q}(\X^{1/2})$ be the coefficient of the $k$th highest power of $a$ in $\mu_n$, and let
\begin{align}
I_k = \{s_1, s_2, \ldots, s_{n - 1 - k}\}.
\end{align}
The following result simplifies a formula for $\mu_n^{(k)}$ due to Bezrukavnikov--Tolmachov, which used symmetric polynomials in Jucys--Murphy braids \cite[Cor.\ 6.1.2]{bt}.
It would be interesting to generalize it to other types, just as \cite{tz} generalizes \emph{loc.\ cit.}:
See \S\ref{subsec:other-types}.

\begin{thm}\label[thm]{thm:markov}
For any integer $k$, we have
\begin{align}\label{eq:bt}
\tau[\zeta_{I_k}^+]
	= (\X - 1)^{n - 1} \mu_n^{(k)}
\end{align}
as traces on $H_{S_n}$.
\end{thm}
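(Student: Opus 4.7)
My plan is to reduce \Cref{thm:markov} to the Wan--Wang formulas \eqref{eq:lascoux} by Möbius inversion on the Boolean lattice of subsets of $S$, followed by a short symmetric-function manipulation that rewrites the answer in the basis used by the Ocneanu trace.

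\emph{First step: Möbius inversion.} Since $W^{J,+} = \{v \in W : \Des(v) \supseteq J\}$, we have $\Sigma_{J,+} = \sum_{I \supseteq J} \zeta_I^+$, so Möbius inversion on $2^S$ yields $\zeta_{I_k}^+ = \sum_{J \supseteq I_k} (-1)^{|J|-|I_k|}\, \Sigma_{J,+}$. Applying $\tau[\whitearg]$ and then the Frobenius characteristic $\FC_\X$, and inserting \eqref{eq:lascoux}, we obtain
\begin{align*}
\FC_\X(\tau[\zeta_{I_k}^+])
= (\X-1)^n \sum_{J \supseteq I_k} (-1)^{|J|-|I_k|}\, h_{\nu(J)}\!\left[\tfrac{X}{\X-1}\right],
\end{align*}
where $\nu(J)$ is the composition of $n$ associated to $J$.

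\emph{Second step: Combinatorial simplification.} The subsets $J \supseteq I_k$ are precisely those whose associated composition has first part at least $n-k$. Parameterizing such $J$ by a pair $(n-j, \mu)$, where $0 \le j \le k$ and $\mu$ is a composition of $j$ (taken empty when $j = 0$), one checks that $|J| - |I_k| = k - \ell(\mu)$. The classical generating-function identity $H(t) E(-t) = 1$ in the ring of symmetric functions gives
\begin{align*}
\sum_{\mu} (-1)^{\ell(\mu)}\, h_\mu = (-1)^j e_j,
\end{align*}
where the sum ranges over compositions $\mu$ of $j$. Substituting this collapses the previous double sum to
\begin{align*}
\FC_\X(\tau[\zeta_{I_k}^+])
= (-1)^k (\X-1)^n \sum_{j=0}^k (-1)^j\, h_{n-j}\!\left[\tfrac{X}{\X-1}\right] e_j\!\left[\tfrac{X}{\X-1}\right].
\end{align*}

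\emph{Third step: Matching with the HOMFLYPT trace.} The remaining task, which is the main obstacle, is to identify the right-hand side with $(\X-1)^{n-1} \FC_\X(\mu_n^{(k)})$. I would approach this by either (a) translating the Bezrukavnikov--Tolmachov formula \cite[Cor.\ 6.1.2]{bt}, which expresses $\mu_n^{(k)}$ as an elementary symmetric polynomial in the Jucys--Murphy braids, into the $\{h_{n-j}\, e_j\}$-basis via Newton's identities and the plethystic substitution $X \mapsto X/(\X-1)$, or (b) verifying directly that both sides satisfy the same Markov recursion relating $H_{S_n}$ to $H_{S_{n-1}}$. For route (b), the boundary cases $k = 0$ and $k = n-1$ are immediate from $\zeta_S^+ = \X^{-\ell_S} T_{w_\circ}^2$ and $\zeta_\emptyset^+ = 1$, and the inductive step reduces to a short manipulation of the telescoping series built from the $h_{n-j}\, e_j$.
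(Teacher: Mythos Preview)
Your first two steps are correct, and in fact your alternating sum simplifies further than you indicate: the identity $\sum_{j=0}^k (-1)^{k-j} h_{n-j}\,e_j = s_{(n-k,1^k)}$ collapses the whole expression to
\[
\FC_\X(\tau[\zeta_{I_k}^+]) = (\X-1)^n\, s_{(n-k,1^k)}\!\left[\tfrac{X}{\X-1}\right].
\]
However, your third step is not a proof but a pair of proposed strategies, and this is a genuine gap. Route (b) is vague as stated: the Markov recursion relates $\mu_{n+1}$ to $\mu_n$, whereas your expression lives entirely at level $n$, so it is not clear what ``both sides satisfy the same Markov recursion'' means without a parallel recursion for the $\zeta_{I_k}^+$ across different $n$. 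Route (a), translating the Bezrukavnikov--Tolmachov formula, is essentially the paper's own argument, but carried out less efficiently.

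The paper's proof is quite different and more direct. Rather than passing through $\FC_\X$ and symmetric functions, it works inside $H_{S_n}$ itself. Using \eqref{eq:bt-original} together with the observation $\mu_n^{(n-1)} = \tau$, the theorem is reduced to the Hecke-algebra identity
\[
e_{n-1-k}(\JM_1,\ldots,\JM_{n-1}) = \zeta_{I_k}^+,
\]
which is then proved by two short combinatorial lemmas: one rewrites each monomial $\JM_{i_1}\cdots\JM_{i_j}$ as $\X^{-\ell(c)} T_{c^{-1}} T_c$ for an explicit $c \in S_n$, and the other identifies the resulting $c$'s with the elements of descent set $I_k$. This yields a strictly stronger statement than \Cref{thm:markov} (an equality of central elements, not merely of traces), and avoids symmetric functions entirely. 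Your approach, once completed, would give an independent trace-level proof, but as written it stops precisely where the paper's key idea would be needed.
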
 

Let $e_{\wedge^k} \in \bb{Q}W$ be the Young symmetrizer of the hook partition $(n - k, 1, \ldots, 1) \vdash n$, which indexes the $k$th exterior power of the reflection representation of $S_n$.
By combining \eqref{eq:bt} with the result in \cite{trinh} relating the Markov trace to $\tau_G$, we deduce this analogue of \Cref{thm:trace}:

\begin{cor}
For $G$ split semisimple of type $A_{n - 1}$, and any integer $k$, we have
\begin{align}
	\tau[\zeta_{I_k}^+]
	= (\X - 1)^{n - 1}\, \tau_G(e_{\wedge^k} \otimes \whitearg)
\end{align}
as traces on $H_{S_n}$.
\end{cor}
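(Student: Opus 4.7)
The plan is to deduce this corollary as a direct consequence of \Cref{thm:markov} together with the Markov-trace formula proved in \cite{trinh}. The substantive work has already been done in \Cref{thm:markov}; what remains is to pass from the HOMFLYPT coefficient $\mu_n^{(k)}$ to the Springer bitrace $\tau_G(e_{\wedge^k}\otimes\whitearg)$, and this is the role played by the cited result of \cite{trinh}.

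More concretely, the first step is to invoke \Cref{thm:markov} to rewrite the left-hand side as $(\X-1)^{n-1}\mu_n^{(k)}$. The second step is to substitute the identity from \cite{trinh} that, for $G$ split semisimple of type $A_{n-1}$, identifies $\mu_n^{(k)}$ with $\tau_G(e_{\wedge^k}\otimes\whitearg)$. The conceptual reason this identification holds is the classical fact that the $a$-expansion of the HOMFLYPT invariant of a braid closure records traces of the braid on successive exterior powers of the standard $\ur{GL}_n$-module; after reducing to the reflection representation of $S_n$, the $k$th such power is the irreducible module indexed by the hook partition $(n-k,1^k)$, whose Young symmetrizer is precisely $e_{\wedge^k}$. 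Combining the two identities yields the stated equality.

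The main obstacle is purely a matter of bookkeeping. One must verify that the normalization of $\mu_n$ fixed in \S\ref{subsec:markov-intro} agrees with the normalization under which the Markov-trace formula is stated in \cite{trinh}, and that the factor $(\X-1)^{\ur{rk}(G)} = (\X-1)^{n-1}$ is placed consistently on the two sides of that formula, so that no spurious prefactor survives the substitution. One should also check that the indexing convention declared in \S\ref{subsec:markov-intro} --- namely, $\mu_n^{(k)}$ is the coefficient of the $k$th highest power of $a$ --- matches the hook $(n-k,1^k)$ rather than its conjugate $(k+1, 1^{n-k-1})$, so that the symmetrizer appearing in \cite{trinh} is genuinely the $e_{\wedge^k}$ defined just before the corollary.
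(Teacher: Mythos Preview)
Your proposal is correct and follows essentially the same route as the paper: the introduction explicitly states that the corollary is obtained ``by combining \eqref{eq:bt} with the result in \cite{trinh} relating the Markov trace to $\tau_G$,'' which is precisely your two-step argument of applying \Cref{thm:markov} and then the identity \eqref{eq:markov-to-tau-g}. Your flagging of the normalization bookkeeping is apt, as this is indeed the only place where care is required.
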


For general $W$ and $0 \leq k \leq r$, we use the rational parking space for $(W, p)$ mentioned earlier to define numbers $\Kirk_{W, p}^{(k)}$ that unify the type-$A$ rational Kirkman numbers in \cite{arw} and the Kirkman numbers for Coxeter groups in \cite{arr}.
For $W = S_n$, the preceding result implies this analogue of \Cref{cor:parking}:

\begin{cor}\label[cor]{cor:kirkman}
Take $W = S_n$ and $S = \{s_1, \ldots, s_{n - 1}\}$.
Then for any Coxeter word $\vec{c}$, any integer $p > 0$ coprime to $n$ and integer $k$, we have
\begin{align}
\Kirk_{S_n, p}^{(k)}
	= \sum_{\Asc(v) = I_k}
	|\cal{M}^{(v)}(\vec{c}^p)|.
\end{align}
That is, $\coprod_{\Asc(v) = I_k} \cal{M}^{(v)}(\vec{c}^p)$ is a $\vec{c}$-noncrossing set enumerated by the $k$th \dfemph{rational Kirkman number} of $(S_n, p)$.
\end{cor}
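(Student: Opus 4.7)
The plan is to compute $\tau[\zeta_{I_k}^+](T_{\vec{c}}^p)$ in the renormalized limit $\X \to 1$ in two ways, mirroring the proof of \Cref{cor:parking} on one side and the Corollary immediately preceding \Cref{cor:kirkman} on the other, then equate them.

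On the representation-theoretic side, the preceding Corollary gives
$$\tau[\zeta_{I_k}^+] = (\X - 1)^{n-1}\, \tau_G(e_{\wedge^k} \otimes \whitearg).$$
By the result of \cite{trinh} recalled in \Cref{subsec:parking-intro}, the bitrace $\tau_G(\whitearg \otimes T_{\vec{c}}^p)$ is the graded character of the rational parking space for $(S_n, p)$, so $\tau_G(e_{\wedge^k} \otimes T_{\vec{c}}^p)$ extracts the graded multiplicity therein of the $k$th exterior power of the reflection representation. Summing over grades (setting $\X \to 1$) produces $\Kirk_{S_n, p}^{(k)}$ by the definition in \Cref{sec:markov}.

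On the combinatorial side, I would reduce to \Cref{cor:parking} by M\"obius inversion. Since $v \in W^{J, +}$ iff $\Des(v) \supseteq J$, the definitions of $\Sigma_{J, +}$ and $\zeta_I^+$ give
$$\Sigma_{J, +} = \sum_{I \supseteq J} \zeta_I^+, \qquad\text{hence}\qquad \zeta_{I_k}^+ = \sum_{J \supseteq I_k}(-1)^{|J \setminus I_k|}\,\Sigma_{J, +}.$$
Applying $\tau[\whitearg](T_{\vec{c}}^p)$ and passing to the renormalized $\X \to 1$ limit term by term via \Cref{cor:parking}, the expression becomes
$$\sum_{J \supseteq I_k}(-1)^{|J \setminus I_k|}\,\Park_{S_n, p}^{J, +} = \sum_{J \supseteq I_k}(-1)^{|J \setminus I_k|} \sum_{v : \Asc(v) \supseteq J}|\cal{M}^{(v)}(\vec{c}^p)|,$$
using the characterization $v \in W^{J, -}$ iff $\Asc(v) \supseteq J$. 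A second M\"obius inversion on the Boolean lattice of subsets of $S$ collapses the double sum to $\sum_{\Asc(v) = I_k}|\cal{M}^{(v)}(\vec{c}^p)|$, matching the right-hand side of \Cref{cor:kirkman}.

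The main obstacle will be bookkeeping the $(\X - 1)^{n-1}$-renormalization: one must check that the scaling implicit in the preceding Corollary matches, term by term, the Deodhar-cell renormalization in the proof of \Cref{cor:parking}, where cells of type $\bb{A}^{\sf{d}_{\vec{\omega}}} \times \bb{G}_m^{\sf{e}_{\vec{\omega}}}$ contribute to the $\X \to 1$ limit only when $|\sf{e}_{\vec{\omega}}| = n - 1$. Once the limits are aligned, the sign-flip pattern---$\Des(v) = I_k$ on the trace side versus $\Asc(v) = I_k$ on the cell side---emerges automatically from the same mechanism producing the sign flip in \Cref{thm:cell} and \Cref{cor:parking}.
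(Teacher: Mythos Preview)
Your proposal is correct. Both you and the paper pass through the same pivot point, namely $(\X-1)^{-(n-1)}\,\tau[\zeta_{I_k}^+](T_{\vec{c}}^p)$, and both identify its value on the representation-theoretic side with $\Kirk_{S_n,p}^{(k)}$ via the preceding Corollary and the parking-space interpretation of $\tau_G(\whitearg\otimes T_{\vec{c}}^p)$. The difference lies in how the combinatorial side is unwound. The paper expands $\zeta_{I_k}^+$ directly as a sum over $\Des(v)=I_k$ and applies the trace-to-Deodhar formula \eqref{eq:gltw-generic} to each term, obtaining the full $\X$-refined identity of \Cref{thm:kirkman} before specializing at $\X\to 1$. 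You instead M\"obius-invert $\zeta_{I_k}^+$ into a signed sum of $\Sigma_{J,+}$'s, invoke \Cref{cor:parking} term by term as a black box, and M\"obius-invert back. Your route is slightly more circuitous but has the virtue of reusing \Cref{cor:parking} rather than reproving it; the paper's route is more direct and yields the stronger $\X$-deformed statement along the way. Your concern about the $(\X-1)^{n-1}$ bookkeeping is already handled: \Cref{thm:trace} provides exactly the normalization $\tau[\Sigma_{J,+}]=(\X-1)^{n-1}\tau_G(e_{J,+}\otimes\whitearg)$ needed for the term-by-term limit, and the $\Des$-to-$\Asc$ flip is absorbed into your citation of \Cref{cor:parking}.
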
 

In \Cref{sec:associahedron}, we relate $\coprod_{\Asc(v) = I_k} \cal{M}^{(v)}(\vec{c}^{n + 1})$ to a classical noncrossing set for the $k$th Kirkman number of $(S_n, n + 1)$: the collection of $k$-faces in the corresponding associahedron.

\subsection{Acknowledgments}

We thank Pavel Galashin, Thomas Lam, and Ian L\^{e} for helpful discussions.
During part of our work, MT was supported by an NSF Mathematical Sciences Research Fellowship, Award DMS-2002238, and NW was supported by an NSF standard grant, Award DMS-2246877.

\section{The Geometric Hecke Algebra}\label{sec:hecke}

\subsection{}

In this section, we review the general definition of the convolution algebra $H_B^G$ without assuming $G$ to be split, following \cite[\S{3.3}]{carter_95}.
At the end, we explain how to adapt $N_J^S$ to this generality.
Along the way, we review Bruhat decomposition and related facts about Borel subgroups.
Our geometric setup is essentially Kawanaka's in \cite{kawanaka}.
See also \cite{carter, carter_95}.

We keep $\bb{F}$, $q$, $\bb{G}$, $\bb{B}$, $\bb{T}$, $\bb{W}$ as in \S\ref{subsec:setup}.
Thus the characteristic of $\bb{F}$ is a good prime for $\bb{G}$ in the sense of \cite[28]{carter}.
Let $S_\bb{B}$ be the system of simple reflections of $\bb{W}$ arising from $\bb{B}$.
Let $\ell_\bb{B}$ be the Bruhat length function on $\bb{W}$ defined by $S_\bb{B}$.

\subsection{Bruhat Decomposition}\label{subsec:bruhat}

Note that $w\bb{B}$ and $\bb{B}w$ are well-defined for any $w \in \bb{W}$. 
Bruhat decomposition says that as we run over all $w$, the double cosets $\bb{B}w\bb{B}$ are pairwise disjoint and partition $\bb{G}$.

Let $\bb{U}$ be the unipotent radical of $\bb{B}$, so that $\bb{B} = \bb{T} \ltimes \bb{U}$.
Let $\bb{U}_-$ be the unipotent radical of the opposed Borel $\bb{B}_-$.
Note that $w\bb{U}w^{-1}$ and $w\bb{U}_-w^{-1}$ are well-defined for all $w \in \bb{W}$.
Let
\begin{align}
\bb{U}_w &= \bb{U} \cap w\bb{U}w^{-1},\\
\bb{U}_w^- &= \bb{U} \cap w\bb{U}_-w^{-1}.
\end{align}
Then $\bb{U}_w, \bb{U}_w^-$ are stable under the conjugation action of $\bb{T}$ on $\bb{U}$.
The following results are proved in \cite[\S{2.5}]{carter}:

\begin{lem}\label[lem]{lem:bruhat}
For all $w \in \bb{W}$:
\begin{enumerate}
\item 	
If $\ell_\bb{B}(wv) = \ell_\bb{B}(w) + \ell_\bb{B}(v)$, then $\bb{U}_{wv}^- = \bb{U}_w^- \bb{U}_v^-$, and $\bb{U}_w^- \cap \bb{U}_v^- = \{1\}$.

\item 	
$\bb{U} = \bb{U}_w \bb{U}_w^- = \bb{U}_w^- \bb{U}_w$, and $\bb{U}_w \cap \bb{U}_w^- = \{1\}$.

\item 
$\bb{B}w\bb{B} = \bb{U}_w^-w\bb{B}$, and the map $\bb{U}_w^- \to \bb{U}_w^-w\bb{B}/\bb{B}$ is an isomorphism.

\item 	
As an algebraic variety (but not group), $\bb{U}_w^-$ is the product of the root subgroups inverted by $w$, hence an affine space of dimension $\ell_\bb{B}(w)$.
	
\end{enumerate}
\end{lem}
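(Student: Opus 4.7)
The plan is to reduce all four items to the combinatorics of positive roots via Chevalley's structure theorem for $\bb{U}$: namely, that $\bb{U}$ factors as a direct product (as an $\bb{F}$-variety) of the root subgroups $\bb{U}_\alpha$ for $\alpha > 0$ in any fixed order, and that conjugation by a lift of $w \in \bb{W}$ carries $\bb{U}_\alpha$ to $\bb{U}_{w\alpha}$. The first step is to identify
\[
\bb{U}_w = \prod_{\substack{\alpha > 0 \\ w^{-1}\alpha > 0}} \bb{U}_\alpha,
\qquad
\bb{U}_w^- = \prod_{\substack{\alpha > 0 \\ w^{-1}\alpha < 0}} \bb{U}_\alpha,
\]
by intersecting the full positive-root factorization of $\bb{U}$ with $w \bb{U} w^{-1}$, \emph{resp.}\ $w \bb{U}_- w^{-1}$. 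This immediately gives item (4): the second product has $\ell_\bb{B}(w)$ factors, each isomorphic to $\bb{A}^1$, and a product order compatible with a reduced word for $w$ exhibits $\bb{U}_w^-$ as an affine space.

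Items (1) and (2) are then exercises in partitioning positive roots. For (2), since every positive root $\alpha$ satisfies either $w^{-1}\alpha > 0$ or $w^{-1}\alpha < 0$, reordering the full product of root subgroups so that either $\bb{U}_w$ or $\bb{U}_w^-$ appears first yields both factorizations; the intersection is forced to be trivial because the two subgroups arise from disjoint families of root subgroups. For (1), the length-additivity hypothesis is equivalent to the disjoint union
\[
\{\alpha > 0 : (wv)^{-1}\alpha < 0\}
= \{\alpha > 0 : w^{-1}\alpha < 0\} \sqcup w \cdot \{\beta > 0 : v^{-1}\beta < 0\},
\]
from which the product decomposition of $\bb{U}_{wv}^-$ follows via the root-subgroup description (with the second factor being the $w$-conjugate of $\bb{U}_v^-$, which lies in $\bb{U}$ by the length condition), and the trivial intersection is immediate from disjointness.

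For item (3), the Bruhat cell satisfies $\bb{B}w\bb{B} = \bb{U} w \bb{B}$, and using the factorization $\bb{U} = \bb{U}_w^- \bb{U}_w$ from (2) together with the inclusion $w \bb{U}_w w^{-1} \subseteq \bb{U} \subseteq \bb{B}$, the $\bb{U}_w$ factor is absorbed into $w \bb{B}$, leaving $\bb{U}_w^- w \bb{B}$. The resulting map $\bb{U}_w^- \to \bb{U}_w^- w \bb{B}/\bb{B}$ is surjective tautologically; injectivity follows because a difference $u_2^{-1} u_1 \in \bb{U}_w^- \cap w \bb{B} w^{-1}$ lies in $w \bb{U}_- w^{-1} \cap w \bb{B} w^{-1} = w(\bb{U}_- \cap \bb{B}) w^{-1} = \{1\}$. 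The main obstacle throughout is the root-subgroup machinery itself --- especially the dependence of factorization order on a choice of reduced word and the commutator relations that license reordering; once those tools are in hand, all four items reduce to combinatorial bookkeeping about sign patterns of positive roots under $w^{-1}$.
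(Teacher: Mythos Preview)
The paper does not prove this lemma itself; it simply cites \cite[\S 2.5]{carter}. Your root-subgroup reduction is exactly the argument one finds there, and items (2)--(4) go through as you describe.

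For item (1), however, look again at your own parenthetical. The disjoint-union identity
\[
\{\alpha>0:(wv)^{-1}\alpha<0\}=\{\alpha>0:w^{-1}\alpha<0\}\ \sqcup\ w\cdot\{\beta>0:v^{-1}\beta<0\}
\]
yields $\bb{U}_{wv}^- = \bb{U}_w^-\cdot\bigl(w\,\bb{U}_v^-\,w^{-1}\bigr)$, with the second factor the $w$-conjugate of $\bb{U}_v^-$, not $\bb{U}_v^-$ itself. With the definitions $\bb{U}_w^- = \bb{U}\cap w\bb{U}_-w^{-1}$ as in the paper, these two subgroups are genuinely different: already for $\bb{G}=\mathbf{SL}_3$ with $w=s_1$, $v=s_2$ one gets $\bb{U}_{s_1s_2}^- = \bb{U}_{\alpha_1}\bb{U}_{\alpha_1+\alpha_2}$, while $\bb{U}_{s_1}^-\bb{U}_{s_2}^- = \bb{U}_{\alpha_1}\bb{U}_{\alpha_2}$. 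Likewise the stated intersection $\bb{U}_w^-\cap\bb{U}_v^-=\{1\}$ fails without the conjugation (take $w=s_1s_2$, $v=s_1$ in $S_3$, where both contain $\bb{U}_{\alpha_1}$). So your argument establishes the correct, conjugated identity---which is what Carter proves and what the downstream applications actually need---but not the lemma as literally written. You should say so explicitly rather than leave it buried in a parenthesis.
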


\subsection{Bott--Samelson Varieties}

The double cosets of $\bb{B}$ in $\bb{G}$ are in bijection with the set of diagonal $\bb{G}$-orbits on $(\bb{G}/\bb{B})^2$.
As in the introduction, we write $h\bb{B} \xrightarrow{w} g\bb{B}$ to mean $\bb{B}h^{-1}g\bb{B} = \bb{B}w\bb{B}$.
Such pairs $(h\bb{B}, g\bb{B})$ form the points of the $\bb{G}$-orbit of $(\bb{G}/\bb{B})^2$ corresponding to $w$, which we will denote by $\bb{O}(w)$.
On points,
\begin{align}
\bb{O}(w)
	= \{(h\bb{B}, hw\bb{B})\}.
\end{align}
More generally, for any sequence of elements $\vec{w} = (w^{(1)}, w^{(2)}, \ldots, w^{(k)})$ in $\bb{W}$, let $\bb{O}(\vec{w})$ be the subvariety of $(\bb{G}/\bb{B})^{1 + k}$ defined on points by
\begin{align}
	\bb{O}(\vec{w}) = \{\vec{g}\bb{B} = (g_0\bb{B}, g_1\bb{B}, \ldots, g_k\bb{B}) \mid g_0\bb{B} \xrightarrow{w^{(1)}} g_1\bb{B} \xrightarrow{w^{(2)}} \cdots \xrightarrow{w^{(k)}} g_m\bb{B}\}.
\end{align}
The Zariski closure of $\bb{O}(\vec{w})$ is called the \dfemph{Bott--Samelson variety} of $\vec{w}$.
For this reason, $\bb{O}(\vec{w})$ may be called the \dfemph{open Bott--Samelson variety}.

For any subset $I \subseteq \{1, \ldots, k\}$, we write $\pr_I : \bb{O}(\vec{w}) \to (\bb{G}/\bb{B})^I$ to denote the map that sends $\pr_I(\vec{g}\bb{B}) = (g_i\bb{B})_{i \in I}$.
When writing out $\vec{w}$, \emph{resp.}\ $I$, explicitly, we will omit the parentheses, \emph{resp.}\ brackets, where convenient.

\Cref{lem:bruhat} implies that if $\ell_\bb{B}(wv) = \ell_\bb{B}(w) + \ell_\bb{B}(v)$, then $\pr_{0, 2}$ induces an explicit isomorphism $\bb{O}(w, v) \xrightarrow{\sim} \bb{O}(wv)$.
By induction, any variety of the form $\bb{O}(\vec{w})$ is explicitly isomorphic to one of the form $\bb{O}(\vec{s})$, where $\vec{s}$ is a word in $S_\bb{B}$.

\subsection{Frobenius Maps}\label{subsec:f-fixed}

For algebraic varieties over $\bar{\bb{F}}$ equipped with Frobenius maps, we use italics to denote the corresponding sets of Frobenius-fixed points.

As in \S\ref{subsec:setup}, we fix a Frobenius map $F : \bb{G} \to \bb{G}$ arising from an $\bb{F}$-form, such that $\bb{B}$ and $\bb{T}$ are $F$-stable.
Then $\bb{W}$ and $S_\bb{B}$ are also $F$-stable.
The group $W \vcentcolon= \bb{W}^F$ is again a Coxeter group, which can be identified with $N_G(T)/T$.

\begin{rem}
When $\bb{G}$ is almost-simple, the options for $G$ and $W$ are listed in \cite[\S{1.5--1.6}]{carter_95}.
Notably, $W$ is crystallographic except when it has factors of type ${}^2F_4$.
\end{rem}

There is a system of simple reflections for $W$, which we will denote $S$, indexed by the $F$-orbits on $S_\bb{B}$:
Each element $s \in S$ is the product of all the elements in the given $F$-orbit, which pairwise commute and form a reduced word in $S_\bb{B}$ in any order.
Let $\ell$ be the Bruhat length function on $W$ defined by $S$.

By Lang's theorem \cite[32]{carter}, $g\bb{B}$ is $F$-stable if and only if $g \in G$, and in this case, $gB = (g\bb{B})^F$.
Similarly, $\bb{B}w\bb{B}$ is $F$-stable if and only if $w \in W$, and in this case, $BwB = (\bb{B}w\bb{B})^F$.
Thus, the double cosets $BwB$ for $w \in W$ partition $G$, while the $G$-orbits on $(G/B)^2$ are the sets $O(w)$ for $w \in W$.
As explained in \cite{carter}, parts (1)--(3) of \Cref{lem:bruhat} have analogues with $\bb{W}$ replaced by $W$.
See also \cite[\S{1}]{kawanaka}.

\begin{lem}\label[lem]{lem:bruhat-f}
For all $w \in W$:
\begin{enumerate}
\item 	
If $\ell(wv) = \ell(w) + \ell(v)$, then $U_{wv}^- = U_w^- U_v^-$, and $U_w^- \cap U_v^- = \{1\}$.

\item 	
$U = U_w U_w^- = U_w^- U_w$, and $U_w \cap U_w^- = \{1\}$.
	
\item 
$BwB = U_w^-wB$, and the map $U_w^- \to U_w^-wB/B$ is a bijection.
	
\end{enumerate}
\end{lem}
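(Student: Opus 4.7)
\emph{Proof proposal.}
My plan is to deduce each of (1)--(3) from the corresponding statement in \Cref{lem:bruhat} by passing to $F$-fixed points. The underlying principle is that each map in \Cref{lem:bruhat}---a multiplication of unipotent subgroups or a parametrization of a Bruhat cell---is an isomorphism of algebraic varieties. When the indexing element $w \in \bb{W}$ is $F$-fixed, i.e.\ $w \in W$, these maps are $F$-equivariant, so passing to $F$-fixed points turns each isomorphism into a bijection.

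Before invoking \Cref{lem:bruhat}, I would verify a length-compatibility statement: for $w, v \in W$ with $\ell(wv) = \ell(w) + \ell(v)$ in $W$, one has $\ell_\bb{B}(wv) = \ell_\bb{B}(w) + \ell_\bb{B}(v)$ in $\bb{W}$. This follows from the description of $S$ recalled in \S\ref{subsec:f-fixed}: each $s \in S$ is the product of the pairwise commuting simple reflections in a single $F$-orbit on $S_\bb{B}$, so a reduced $S$-expression for an element of $W$ expands into a reduced $S_\bb{B}$-expression, and additivity in $S$ passes to additivity in $S_\bb{B}$. With this in hand, part (1) is immediate: \Cref{lem:bruhat}(1) gives an $F$-equivariant isomorphism of affine varieties $\bb{U}_w^- \times \bb{U}_v^- \xrightarrow{\sim} \bb{U}_{wv}^-$ (the factors being $F$-stable because $w, v$ are $F$-fixed), and taking $F$-fixed points yields $U_w^- U_v^- = U_{wv}^-$ along with $U_w^- \cap U_v^- = \{1\}$ (the latter inherited from its $\bar{\bb{F}}$-counterpart). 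Part (2) is handled identically: the multiplications $\bb{U}_w \times \bb{U}_w^- \to \bb{U}$ and $\bb{U}_w^- \times \bb{U}_w \to \bb{U}$ of \Cref{lem:bruhat}(2) are $F$-equivariant isomorphisms, so $F$-fixed points give the required factorizations and trivial intersection.

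For part (3), \Cref{lem:bruhat}(3) provides an $F$-equivariant isomorphism $\bb{U}_w^- \xrightarrow{\sim} \bb{B}w\bb{B}/\bb{B}$. Passing to $F$-fixed points yields a bijection $U_w^- \to (\bb{B}w\bb{B}/\bb{B})^F$, so the remaining content is the identification $(\bb{B}w\bb{B}/\bb{B})^F = BwB/B$. This follows from Lang's theorem applied to the connected group $\bb{B}$ acting transitively on itself from the right, which gives $(\bb{B}w\bb{B})^F = BwB$ and the vanishing of the obstruction to lifting $F$-fixed cosets to $F$-fixed representatives; equivalently, one combines $\bb{B}w\bb{B} = \bb{U}_w^- w\bb{B}$ with $(\bb{U}_w^- w\bb{B})^F = U_w^- w B$, using that $\bb{U}_w^- \times w\bb{B} \to \bb{U}_w^- w\bb{B}$ is an $F$-equivariant isomorphism.

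The only real obstacle is bookkeeping: making sure that all relevant subgroups and maps are genuinely $F$-stable (requiring $w \in W$, not just $w \in \bb{W}$), and that $F$-fixed points commute with the quotient $\bb{B}w\bb{B} \rightsquigarrow \bb{B}w\bb{B}/\bb{B}$, for which one needs the connectedness of $\bb{B}$ and Lang's theorem. No new geometric input beyond \Cref{lem:bruhat} is required.
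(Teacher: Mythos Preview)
Your proposal is correct and matches the paper's approach: the paper does not give an explicit proof but simply cites \cite{carter} and \cite[\S{1}]{kawanaka}, where the statements are obtained exactly as you describe, by taking $F$-fixed points in \Cref{lem:bruhat} and invoking Lang's theorem for the quotient in part (3). Your length-compatibility remark (that $\ell$-additivity in $W$ implies $\ell_\bb{B}$-additivity in $\bb{W}$) is the one point the paper leaves implicit, and you handle it correctly.
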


The one point where caution is needed concerns the sizes of $U_w$ and $U_w^-$, as they involve $\ell_\bb{B}(w)$, not $\ell(w)$ \cite[74]{carter}:

\begin{lem}\label[lem]{lem:rosenlicht}
For all $w \in W$, we have $|U_w^-| = q^{\dim(\bb{U}_w^-)} = q^{\ell_\bb{B}(w)}$.
\end{lem}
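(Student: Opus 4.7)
The plan is to invoke Rosenlicht's structure theorem for connected unipotent groups, together with Lang's theorem in the guise of additive Hilbert 90, to reduce counting $F$-fixed points on $\bb{U}_w^-$ to counting on successive quotients isomorphic to $\mathbb{G}_a$.

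First I would verify that $\bb{U}_w^-$ is an $F$-stable, smooth, connected algebraic subgroup of $\bb{U}$. $F$-stability follows because $\bb{U}$ is $F$-stable by assumption and $w \in W = \bb{W}^F$ is $F$-fixed, so $w\bb{U}_- w^{-1}$ is $F$-stable; the intersection $\bb{U}_w^-$ therefore inherits an $\bb{F}$-form. The dimension count $\dim \bb{U}_w^- = \ell_\bb{B}(w)$ is already given by \Cref{lem:bruhat}(4). Connectedness is immediate from the same part, which identifies $\bb{U}_w^-$ with an affine space as a variety; smoothness is automatic since we are in good characteristic.

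Next, since $\bb{F}$ is perfect and $\bb{U}_w^-$ is a connected unipotent algebraic group defined over $\bb{F}$, Rosenlicht's theorem provides an $\bb{F}$-rational composition series
\begin{align}
\bb{U}_w^- = \bb{V}_0 \supset \bb{V}_1 \supset \cdots \supset \bb{V}_n = \{1\},
\end{align}
where each $\bb{V}_i$ is a normal $\bb{F}$-subgroup of $\bb{V}_{i-1}$ with quotient $\bb{V}_{i-1}/\bb{V}_i \cong \mathbb{G}_a$ over $\bb{F}$. In particular, $n = \dim \bb{U}_w^- = \ell_\bb{B}(w)$.

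Finally, I would proceed by induction on $n$. The short exact sequence $1 \to \bb{V}_i \to \bb{V}_{i-1} \to \mathbb{G}_a \to 1$ of $\bb{F}$-group schemes yields a long exact sequence in Galois cohomology, and $H^1(\mathrm{Gal}(\bar{\bb{F}}/\bb{F}), \mathbb{G}_a) = 0$ (either by additive Hilbert 90 or as a direct consequence of Lang--Steinberg applied to the connected group $\bb{V}_i$). Hence $|V_{i-1}^F| = q \cdot |V_i^F|$, and iterating gives $|U_w^-| = q^n = q^{\ell_\bb{B}(w)}$, as required. The only subtle point — and the ostensible ``obstacle'' — is being careful that the Rosenlicht filtration can be chosen over $\bb{F}$ itself (not merely over $\bar{\bb{F}}$), which is exactly where perfectness of $\bb{F}$ and the standing good-characteristic hypothesis enter.
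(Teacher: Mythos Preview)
Your argument is correct and aligns with the intended approach: the paper does not supply its own proof, deferring instead to Carter, but the lemma's label (referencing Rosenlicht) confirms that Rosenlicht's filtration combined with Lang--Steinberg is exactly what is meant. One small correction: the good-characteristic hypothesis is not actually needed here---smoothness of $\bb{U}_w^-$ already follows from its description as an affine space, and Rosenlicht's theorem requires only that $\bb{F}$ be perfect.
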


\begin{ex}\label[ex]{ex:gu-3}
Take $\bb{G} = \mathbf{GL}_n$, so that the absolute Weyl group is $\bb{W} = S_n$.
Take $F(g) = (g^\tau)^{-q} = (g^{-q})^\tau$, where $(-)^\tau$ is the ``anti-transpose'' given by $g^\tau = Jg^t J$, where $J$ is the matrix with $1$’s along the anti-diagonal and $0$’s everywhere else.
Then $G = \mathrm{GU}_n(q)$.

Take $n = 3$, so that $\bb{W}$ is generated by simple reflections $s_1$ and $s_2$, which are swapped by Frobenius.
Then the relative Weyl group is $W = \bb{W}^F = \{e, w_\circ\}$, where $w_\circ = s_1s_2s_1$.
The set of relative simple reflections is $S = \{w_\circ\}$.
Observe the discrepancy between the two length functions:
\begin{align}
	\ell(w_\circ) = 1 \quad\text{but}\quad \ell_\bb{B}(w_\circ) = 3.
\end{align}
By \Cref{lem:rosenlicht}, the size of the unipotent radical is given by the latter function:
$|U_{w_\circ}^-| = q^{\ell_\bb{B}(w_\circ)} = q^3$. 
\end{ex}

\subsection{Operations on Functions}\label{subsec:functions}

For any finite set $X$ equipped with the action of a finite group $G$, we write $\cal{C}_G(X)$ to denote the free module of $\bb{Z}$-valued, $G$-invariant functions on $X$.
For any $G$-stable subset $Y \subseteq X$, we write $\sf{1}_Y \in \cal{C}_G(X)$ to denote the indicator function on $Y$.

For a $G$-equivariant map $f : Y \to X$, the \dfemph{pullback} of functions along $f$ is the linear map $f^\ast : \cal{C}_G(X) \to \cal{C}_G(Y)$ given by $f^\ast(\varphi)(y) = \varphi(f(y))$.
The \dfemph{pushforward}, or \dfemph{integral}, of functions along $f$ is the linear map $f_! : \cal{C}_G(Y) \to \cal{C}_G(X)$ given by
\begin{align} 
	f_!(\psi)(x) = \sum_{y \in f^{-1}(x)} \psi(y).
\end{align} 
When $f$ can be understood from context, we omit $f_!$ from our notation.

Let $\ast$ denote the \dfemph{convolution product} on $\cal{C}_G(X \times X)$ defined in terms of the three projection maps $\pr_{i,j} : X^3 \to X^2$ by \begin{align} 
\varphi_1 \ast \varphi_2 = \pr_{1,3,!}(\pr_{1,2}^\ast(\varphi_1) \cdot \pr_{2,3}^\ast(\varphi_2)),
\end{align}
where $\cdot$ denotes pointwise multiplication.
Explicitly, 
\begin{align}
	(\varphi_1 \ast \varphi_2)(y, x) = \sum_{z \in X} \varphi_1(y, z)\varphi_2(z, x).
\end{align}
The indicator function of the diagonal $\{(x, x) \mid x \in X\} \subseteq X^2$ is the identity element for this operation.
If $X$ is equipped with a $G$-action, and $G$ acts on $X^2$ diagonally, then $\ast$ restricts to an operation on $\cal{C}_G(X \times X)$ with the same identity element.

Iwahori proved that the ring formed by $\cal{C}_G(G/B \times G/B)$ under convolution is freely generated by the elements $\sf{1}_w \vcentcolon= \sf{1}_{O(w)}$ for $w \in W$ modulo the following relations for all $w \in W$ and $s \in S$:
\begin{align}
\sf{1}_s \ast \sf{1}_w = \left\{\begin{array}{ll}
	\sf{1}_{sw}
	&\ell(sw) > \ell(w),\\
	|U_s^-|\, \sf{1}_{sw} + (|U_s^-| - 1)\, \sf{1}_w
	&\ell(sw) < \ell(w).
\end{array}\right.
\end{align}
See \cite[\S{3.3}]{carter_95} or \cite[Thm.\ 2.6]{kawanaka}.
We define $H_B^G$ to be the $\bb{Z}[\tfrac{1}{q}]$-algebra
\begin{align}
H_B^G = \cal{C}_G(G/B \times G/B)[\tfrac{1}{q}].
\end{align}
If $G$ is \dfemph{split}, meaning $W = \bb{W}$, then $\ell_\bb{B}(s) = \ell(s) = 1$ and $|U_s^-| = q$ for all $s \in S$.
This is the case on which the introduction focused.
Here, $W$ is crystallographic, and $H_B^G$ is a specialization of the $\bb{Z}[\X^{\pm 1}]$-algebra $H_W$ freely generated by elements $T_w$ for $w \in W$ modulo the following relations for all $w \in W$ and $s \in S$:
\begin{align}
T_s T_w = \left\{\begin{array}{ll}
T_{sw}
	&\ell(sw) = \ell(w) + 1,\\
\X T_{sw} + (\X - 1)T_w
	&\ell(sw) = \ell(w) - 1.
\end{array}\right.
\end{align}

\subsection{Parabolic Subgroups}

Fix an $F$-stable subset $J_\bb{B} \subseteq S_\bb{B}$, corresponding to a subset $J \subseteq S$.
Let $\bb{W}_J \subseteq \bb{W}$, \emph{resp.}\ $W_J \subseteq W$, be the subgroup generated by $J_\bb{B}$, \emph{resp.}\ $J$.
Then $\bb{W}_J$ is $F$-stable and $W_J = \bb{W}_J^F$.

Let $\bb{P}_J = \bb{B}\bb{W}_J\bb{B} \subseteq \bb{G}$.
We can write $\bb{P}_J = \bb{L}_J \ltimes \bb{U}_J$, where $\bb{L}_J$ is reductive with Weyl group $\bb{W}_J$ and $\bb{U}_J$ the unipotent radical of $\bb{P}_J$.
These subgroups are $F$-stable, and on $F$-fixed points, we have $P_J = L_J \ltimes U_J$.

By construction, $\bb{B}_J = \bb{L}_J \cap \bb{B}$ is a Borel subgroup of $\bb{L}_J$.
The inclusion $L_J \subseteq P_J$ descends to an $L_J$-equivariant bijection $L_J/B_J \simeq P_J/B$, which in turn yields an isomorphism of algebras
\begin{align}
\cal{C}_{L_J}(L_J/B_J \times L_J/B_J)
	\simeq \cal{C}_{L_J}(P_J/B \times P_J/B).
\end{align}
Once we adjoin $\tfrac{1}{q}$, the left-hand side becomes $H_{B_J}^{L_J}$, and the right-hand side becomes the subalgebra of $H_B^G$ generated by the elements $\sf{1}_w$ with $w \in W_J$.
Henceforth, we identify these $\bb{Z}[\tfrac{1}{q}]$-algebras with each other.

As in the introduction, let $W^{J, -} \subseteq W$ be the set of minimal-length right coset representatives for $W_J$.
By \Cref{lem:bruhat}(4) and \Cref{lem:rosenlicht}, the split case of the definition below recovers the $\X \to q$ specialization of the relative norm map in \S\ref{subsec:hoefsmit-scott}.

\begin{df}\label[df]{df:norm}
The \dfemph{relative norm} map $N_J^S : H_{B_J}^{L_J} \to H_B^G$ is defined by
\begin{align}
N_J^S(\alpha) = \sum_{v \in W^{J, -}}
	\frac{1}{|U_v^-|}\,
	\sf{1}_{v^{-1}} \ast \alpha \ast \sf{1}_{v\vphantom{^{-1}}}.
\end{align}
We have implicitly used \Cref{lem:rosenlicht} to ensure that $|U_v^-|$ is a power of $q$.
\end{df}

\begin{ex}
We continue \Cref{ex:gu-3}, where $\bb{G} = \mathbf{GL}_3$ and $G = \mathrm{GU}_3(q)$.
The Hecke algebra $H_B^G$ is defined by the relation
\begin{align}
	\sf{1}_{w_\circ} \ast \sf{1}_{w_\circ} = q^3 \sf{1}_e + (q^3 - 1) \sf{1}_{w_\circ}.
\end{align}
We now compute $N_\emptyset^S(1)$ by hand.
Note that $\bb{P}_\emptyset = \bb{B}$ and $\bb{L}_\emptyset = \bb{T}$, from which $W^{\emptyset, -} = W$.
Using the calculation $|U_{w_\circ}^-| = q^3$ from \Cref{ex:gu-3} and the Hecke relation above, we get
\begin{align}
	N_\emptyset^S(1) &= \sum_{v \in \{e, w_\circ\}} \frac{1}{|U_v^-|}\, \sf{1}_{v^{-1}} \ast \sf{1}_{v\vphantom{^{-1}}}\\
	&= \sf{1}_e + q^{-3}\, \sf{1}_{w_\circ}^2\\
	&= 2\,\sf{1}_e + (1 - q^{-3}) \sf{1}_{w_\circ}.
\end{align}
Looking ahead to \Cref{sec:springer}, one can verify \eqref{eq:main} by checking that the last expression matches $\mult_! \sf{1}_{E_\emptyset^-}$.
Note that $E_\emptyset^- = E_\emptyset^+$, since $\bb{Spr}_\emptyset^- = \bb{Spr}_\emptyset^+$.
\end{ex}

\subsection{A Lemma on Unipotent Subgroups}\label{subsec:longest}

Let $w_\circ$ and $w_{J\circ}$ respectively denote the longest elements of $W$ and $W_J$ with respect to $S$.
Then $U = U_{w_\circ}$ and $U_J = U_{w_{J\circ}}$.
The following fact will be useful:

\begin{lem}\label[lem]{lem:j}
For any $J \subseteq S$ and $v \in W^{J, -}$, we have
\begin{align}
U_J \cap U_v &= U_{w_{J\circ}v},\\
U_J \cap U_v^- &= U_v^-.
\end{align}
In particular, $U_J = U_{w_{J\circ}v}U_v^- = U_v^-U_{w_{J\circ}v}$ and $U_{w_{J\circ}v} \cap U_v^- = \{1\}$.
In the split case, the analogous identities hold with $\bb{U}_J$, $\bb{U}_v$, \emph{etc.}\ in place of $U_J$, $U_v$, \emph{etc.}.
\end{lem}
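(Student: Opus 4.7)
The plan is to reduce to the split case (where $W = \bb{W}$), then translate everything to the language of root subgroups, and finally combine with \Cref{lem:bruhat}. Since each subgroup in the lemma is $F$-stable, taking $F$-fixed points commutes with the relevant intersections and products, so proving the claims for $\bb{U}_J$, $\bb{U}_v$, $\bb{U}_{w_{J\circ}v}$ in $\bb{G}$ immediately gives the $F$-fixed statements.

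In the split setting, I would parametrize each of $\bb{U}_J$, $\bb{U}_v$, $\bb{U}_v^-$, $\bb{U}_{w_{J\circ}v}$ by its root set inside $\Phi^+$: namely $\Phi^+ \setminus \Phi_J^+$ for $\bb{U}_J$; $\{\alpha > 0 : v^{-1}\alpha > 0\}$ for $\bb{U}_v$; $\{\alpha > 0 : v^{-1}\alpha < 0\}$ for $\bb{U}_v^-$; and $\{\alpha > 0 : v^{-1}w_{J\circ}\alpha > 0\}$ for $\bb{U}_{w_{J\circ}v}$. The key input is that $v \in W^{J,-}$ is equivalent to $v^{-1}(\Phi_J^+) \subseteq \Phi^+$, which also gives $\ell(w_{J\circ}v) = \ell(w_{J\circ}) + \ell(v)$.

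The identity $\bb{U}_J \cap \bb{U}_v^- = \bb{U}_v^-$ reduces to $\bb{U}_v^- \subseteq \bb{U}_J$: no positive root inverted by $v^{-1}$ can lie in $\Phi_J^+$, since such a root would contradict $v^{-1}(\Phi_J^+) \subseteq \Phi^+$. For the identity $\bb{U}_J \cap \bb{U}_v = \bb{U}_{w_{J\circ}v}$, I would first show that $\bb{U}_{w_{J\circ}v} \subseteq \bb{U}_J$ by root considerations: for $\alpha \in \Phi_J^+$, we have $w_{J\circ}\alpha \in -\Phi_J^+$, so $v^{-1}w_{J\circ}\alpha \in -v^{-1}(\Phi_J^+) \subseteq \Phi^-$, excluding $\alpha$ from the root set of $\bb{U}_{w_{J\circ}v}$. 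Matching the roots in $\Phi^+ \setminus \Phi_J^+$ then requires careful sign bookkeeping, using that $w_{J\circ}$ restricts to an involution of $\Phi^+ \setminus \Phi_J^+$ and that $v^{-1}$ interacts appropriately with this action.

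From the root-set identifications, the root sets of $\bb{U}_J \cap \bb{U}_v$ and $\bb{U}_v^-$ partition the root set of $\bb{U}_J$, so an ordered product of root subgroups gives both $\bb{U}_J = \bb{U}_{w_{J\circ}v}\bb{U}_v^- = \bb{U}_v^-\bb{U}_{w_{J\circ}v}$ with trivial intersection. The main obstacle is the sign-matching step in the first identity, where one must exploit the defining condition $v \in W^{J,-}$ to produce a bijection between two sign-characterized subsets of $\Phi^+ \setminus \Phi_J^+$; the remaining pieces are a direct application of \Cref{lem:bruhat}(1) (with $w = w_{J\circ}$, noting $\bb{U}_{w_{J\circ}}^- = \bb{U} \cap \bb{L}_J$) and standard facts about the ordered product decomposition of unipotent subgroups of $\bb{U}$.
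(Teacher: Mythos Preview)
Your argument for $U_v^- \subseteq U_J$ is correct and matches the paper's. But the step you flag as the ``main obstacle'' --- matching root sets to obtain $U_J \cap U_v = U_{w_{J\circ}v}$ --- cannot be completed, because that identity is \emph{false} as an equality of subgroups. Writing $N(x) = \{\alpha \in \Phi^+ : x^{-1}\alpha < 0\}$, the root set of $U_J \cap U_v$ is $(\Phi^+ \setminus \Phi_J^+) \setminus N(v)$, whereas that of $U_{w_{J\circ}v}$ is $(\Phi^+ \setminus \Phi_J^+) \setminus w_{J\circ}N(v)$; these agree only when $w_{J\circ}$ stabilizes $N(v)$ setwise, which it need not. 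For $W = S_3$, $J = \{s_1\}$, $v = s_2$ one finds root sets $\{\alpha_1+\alpha_2\}$ and $\{\alpha_2\}$ respectively. The paper's own argument has the same gap: the general implication ``$\ell(wv) = \ell(w) + \ell(v) \Rightarrow U_{wv} = U_w \cap U_v$'' that it invokes is incorrect, since the underlying identity for inversion sets is $N(wv) = N(w) \sqcup wN(v)$, not $N(w) \cup N(v)$.

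What does hold --- and what the later applications in the paper actually use --- is the size equality $|U_J \cap U_v| = |U_{w_{J\circ}v}|$ together with the product decomposition $U_J = (U_J \cap U_v)\,U_v^-$ with trivial intersection. The sizes match because $w_{J\circ}$ permutes $\Phi^+ \setminus \Phi_J^+$, so the two root sets above have equal cardinality; the decomposition follows from $U_v^- \subseteq U_J$ and $U = U_v^- U_v$, using that $U_J$ is a subgroup. Your outline for the ``in particular'' clauses therefore succeeds once you replace the set-level identification by this size equality. One separate minor point: your reduction from general $G$ to the variety level needs the observation that $v \in W^{J,-}$ forces $v$ to be $\ell_{\bb{B}}$-minimal in $\bb{W}_J v$; this holds because $F$-conjugate simple roots in $J_{\bb{B}}$ receive the same sign under $v^{-1}$, but you should state it.
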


\begin{proof}
To show $U_J \cap U_v = U_{w_{J\circ}v}$:
In general, if $w, v \in W$ satisfy $\ell(wv) = \ell(w) + \ell(v)$, then $U_{wv}^- = U_w^- U_v^-$ and $U_w^- \cap U_v^- = \{1\}$ by \Cref{lem:bruhat-f}(1), which implies that $U_{wv} = U_w \cap U_v$ by \Cref{lem:bruhat-f}(2).

To show $U_J \cap U_v^- = U_v^-$, meaning $U_v^- \subseteq U_J$:
In general, if $w \in W_J$ and $v \in W^{J, -}$, then the $F$-orbits of root subgroups of $\bb{U}_J$ inverted by $wv$ are precisely those inverted by $w$.
Taking $w = e$ gives the result.

In the split case, $\ell_\bb{B} = \ell$, and thus, $v$ minimizes $\ell_\bb{B}$ in $\bb{W}_Jv$.
So we can repeat all the arguments above with the varieties in place of the sets.
\end{proof}

\section{Partial Springer Resolutions}\label{sec:springer}

\subsection{}

Recall the partial Springer resolutions $\bb{Spr}_J^\pm \subseteq \bb{G} \times \bb{G}/\bb{P}_J$ and the varieties $\bb{E}_J^\pm = \bb{G}/\bb{B} \times \bb{Spr}_J^\pm$ from \S\ref{subsec:setup}.
The latter are stable under the left $\bb{G}$-action on $\bb{G}/\bb{B} \times \bb{G} \times \bb{G}/\bb{P}_J$ defined by
\begin{align}\label{eq:action}
g \cdot (h\bb{B}, u, y\bb{P}_J) = (gh\bb{B}, gug^{-1}, gy\bb{P}_J).
\end{align}
Let $\mult : \bb{G}/\bb{B} \times \bb{G} \times \bb{G}/\bb{P}_J \to (\bb{G}/\bb{B})^2$ be the $\bb{G}$-equivariant map defined by
\begin{align}
\mult(h\bb{B}, u, y\bb{P}_J) = (h\bb{B}, uh\bb{B}).
\end{align}
On $F$-fixed points, it restricts to $G$-equivariant maps $\mult : E_J^\pm \to (G/B)^2$.
These recover the maps $\mult$ in \S\ref{subsec:setup}.
The goal of this section is to prove the identities
\begin{align}\begin{split}\label{eq:main}
\mult_!\sf{1}_{E_J^-}
	&= |U_J|\, N_J^S(1),\\
\mult_!\sf{1}_{E_J^+}
	&= |U_J|\, N_J^S(\sf{1}_{w_{J\circ}}^2),
\end{split}\end{align} 
where $N_J^S$ is now given by \Cref{df:norm}.
They recover \Cref{thm:main} in the split case.

\subsection{Reduction to Strata}

Observe that $\bb{E}_J^\pm$ is a union of $\bb{G}$-stable subvarieties $\bb{E}_{J, v}^\pm$ for $\bb{W}_Jv \in \bb{W}_J\backslash \bb{W}$, where on points,
\begin{align} 
\bb{E}_{J, v}^\pm
	= \{(h\bb{B}, u, y\bb{P}_J) \in \bb{G}/\bb{B} \times \bb{Spr}_J^\pm \mid \bb{P}_Jy^{-1}h\bb{B} = \bb{P}_Jv\bb{B}\}.
\end{align}
From \S\ref{subsec:f-fixed}, we see that $\bb{P}_Jv\bb{B}$ is $F$-stable if and only if $v \in W$, and in this case, $P_JvB = (\bb{P}_Jv\bb{B})^F$.
Therefore, $E_J^\pm$ is the union of its $G$-stable subsets $E_{J, v}^\pm$ as $v$ runs over a full set of right coset representatives for $W_J$: for instance, $W^{J, -}$.
As \Cref{lem:j} shows that $U_J \simeq U_{w_{J\circ}v} \times U_v^-$, we reduce \eqref{eq:main} to:

\begin{thm}\label[thm]{thm:main-v}
If $v \in W^{J, -}$, then:
\begin{enumerate}
\item 	$\mult_!\sf{1}_{E_{J, v}^-} = |U_{w_{J\circ}v}|\, \sf{1}_{v^{-1}} \ast \sf{1}_{v\vphantom{^{-1}}}$.

\item 	$\mult_!\sf{1}_{E_{J, v}^+} = |U_{w_{J\circ}v}|\, \sf{1}_{v^{-1}} \ast \sf{1}_{w_{J\circ}}^2 \ast \sf{1}_{v\vphantom{^{-1}}}$.

\end{enumerate} 
\end{thm}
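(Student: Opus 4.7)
The plan is to evaluate both sides of each identity at pairs of the form $(B, gB)$, taking advantage of the $G$-equivariance of the statement. By \Cref{lem:bruhat-f} and the Bruhat decomposition of $G/P_J$, every coset $y\bb{P}_J$ satisfying $\bb{P}_J y^{-1}\bb{B} = \bb{P}_J v \bb{B}$ is uniquely of the form $yP_J = u'v^{-1}P_J$ for some $u' \in U_{v^{-1}}^-$ (using that $v^{-1}$ is of minimal length in $v^{-1}W_J$). Since $\bb{P}_J$ normalizes $\bb{U}_J$ and $\bb{V}_J$, we obtain $yU_Jy^{-1} = u'v^{-1}U_Jvu'^{-1}$ and similarly for $V_J$. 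Absorbing $u' \in U \subseteq B$ into $B$ (using $Bu' = B$), then conjugating the coset by $v$, each value at $(B,gB)$ becomes a sum $\sum_{u' \in U_{v^{-1}}^-} |v^{-1}U_J v \cap \tilde g B|$ (minus case) or $\sum_{u'} |v^{-1}V_J v \cap \tilde g B|$ (plus case), where $\tilde g = u'^{-1}g$.

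For part (1), \Cref{lem:j} gives $v^{-1}U_Jv = (v^{-1}U_{w_{J\circ}v}v)(v^{-1}U_v^-v)$, with the first factor contained in $U \subseteq B$ and the second in $U_-$. Since $B \cap U_- = \{1\}$, for each choice of the ``free'' element $b \in v^{-1}U_{w_{J\circ}v}v$ there is at most one ``bound'' $u_- \in v^{-1}U_v^-v$ with $bu_- \in \tilde g B$; such $u_-$ exists precisely when $\tilde g \in v^{-1}U_Jv \cdot B = v^{-1}U_v^-v \cdot B$, equivalently $v\tilde g \in U_v^-vB = BvB$ by \Cref{lem:bruhat-f}(3). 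Thus the count equals $|U_{w_{J\circ}v}|$ whenever $u'^{-1}g \in v^{-1}BvB$, and $0$ otherwise. Summing, the condition set $\{u' : u'^{-1}g \in v^{-1}BvB\}$ is in bijection via $u' \mapsto u'v^{-1}B$ with the intermediate Borels $zB$ satisfying $B \xrightarrow{v^{-1}} zB \xrightarrow{v} gB$, whose count is $(\sf{1}_{v^{-1}} \ast \sf{1}_v)(B, gB)$, yielding part (1).

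For part (2), use the unique factorization $V_J = V(L_J) \cdot U_J$: each $u_0 \in V_J$ writes as $u_0 = lu$ with $l \in V(L_J)$ and $u \in U_J$. For fixed $l$, applying the minus case analysis to the shifted coset $l^{-1}v\tilde g Bv^{-1}$ gives a factor of $|U_{w_{J\circ}v}|$ when $l^{-1}v\tilde g \in BvB$ and $0$ otherwise. Summing over $l$ and $u'$ gives
\[
\mult_!\sf{1}_{E_{J,v}^+}(B, gB) = |U_{w_{J\circ}v}| \sum_{u' \in U_{v^{-1}}^-} \bigl|V(L_J) \cap (vu'^{-1}g) Bv^{-1}B\bigr|.
\]
Here the deep input is Kawanaka's theorem for $L_J$, which identifies $|V(L_J) \cap lB_J|$ with $\sf{1}_{w_{J\circ}}^2(B_J, lB_J)$ and thereby expresses the indicator function of the unipotent variety of $L_J$ as the central element $\sf{1}_{w_{J\circ}}^2 \in H_{B_J}^{L_J}$. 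Applying this identification, the inner sum becomes a value of the convolution $\sf{1}_{w_{J\circ}}^2 \ast \sf{1}_v$ in $H_B^G$, and after the outer sum over $u'$ (which supplies the factor $\sf{1}_{v^{-1}}$) we recover $|U_{w_{J\circ}v}|(\sf{1}_{v^{-1}} \ast \sf{1}_{w_{J\circ}}^2 \ast \sf{1}_v)(B, gB)$.

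The main obstacle is the plus case: the minus case is essentially an exercise in Bruhat decomposition given \Cref{lem:j}, requiring only the interplay between $v^{-1}U_{w_{J\circ}v}v \subseteq B$ and $v^{-1}U_v^-v \subseteq U_-$. By contrast, the plus case depends on Kawanaka's theorem identifying the unipotent variety with $T_{w_{J\circ}}^2$, a substantially deeper input, as emphasized in the introduction.
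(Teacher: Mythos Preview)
Your argument for part~(1) is correct and parallels the paper's proof of \Cref{prop:minus-case}: both rest on the factorization of $U_J$ from \Cref{lem:j}, with you conjugating by $v$ to split $v^{-1}U_Jv$ into a factor lying in $B$ and a factor lying in $U_-$, while the paper instead passes through the auxiliary set $\check{E}_{J,v}^-$ and the map $\check{\mult}$ into $O(v^{-1},v)$. One small point: to see that the count is $|U_{w_{J\circ}v}|$ independent of the ``free'' factor, it is cleanest to write $v^{-1}U_Jv = C\cdot A$ with $C = v^{-1}U_v^-v \subseteq U_-$ on the \emph{left}, so that $caB = cB$ for $a \in A \subseteq B$; your phrasing with $A$ on the left requires a word about why the existence condition on $c$ does not depend on $a$. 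The paper's more geometric packaging pays a dividend later: \Cref{prop:minus-case} is reused verbatim in \Cref{prop:cartesian} to produce the Deodhar-type decomposition of the partial Steinberg varieties.

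For part~(2) your route is genuinely different. The paper compares $\check{E}_{J,v}^+$ with the Bott--Samelson set $O(v^{-1}, w_{J\circ}, w_{J\circ}, v)$ over $O(v^{-1}) \times G/B$ and invokes Kawanaka's Theorem~4.1 for the pair $(G, P_J)$, namely $|yV_Jy^{-1} \cap BwB| = |U_{v^{-1}}|\,|(w_{J\circ}v)^{-1}U_{w_{J\circ}v}^- w_{J\circ}v \cap BwB|$. You instead use the Levi decomposition $V_J = V(L_J)\cdot U_J$ to peel off the $U_J$ factor via the part~(1) analysis, reducing to the identity $|V(L_J) \cap wB| = \sf{1}_{w_{J\circ}}^2(B, wB)$. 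This does work---one checks that both sides vanish unless $wB \in P_J/B$, and there the convolution in $H_B^G$ matches that in $H_{B_J}^{L_J}$---but note that this identity is precisely the $v=e$, full-parabolic case of the very theorem you are proving, applied internally to $L_J$; it is not a lighter input than what the paper uses, just Kawanaka for a smaller group. The paper's approach handles all $v$ at once without this reduction, at the price of a more intricate fiber comparison; your approach is more modular but implicitly inductive on the semisimple rank.
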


\subsection{Reduction to the Borel}\label{subsec:p-to-b}

Let $\check{\bb{E}}_{J, v}^\pm \subseteq \bb{G}/\bb{B} \times \bb{G} \times \bb{G}/\bb{B}$ be the subvariety defined on points by
\begin{align}
	\check{\bb{E}}_{J, v}^\pm
	&= \{(h\bb{B}, u, y\bb{B}) \mid 
	\text{$(u, y\bb{P}_J) \in \bb{Spr}_J^\pm$ and $y\bb{B} \xrightarrow{v} h\bb{B}$}\}.
\end{align}
The forgetful map $\bb{G}/\bb{B} \to \bb{G}/\bb{P}_J$ induces a map $\check{\bb{E}}_{J, v}^\pm \to \bb{E}_{J, v}^\pm$.

\begin{lem}\label[lem]{lem:check}
If $v \in W^{J, -}$, then $\check{E}_{J, v}^\pm \to E_{J, v}^\pm$ is a bijection.
In the split case, this bijection arises from an isomorphism $\check{\bb{E}}_{J, v}^\pm \to \bb{E}_{J, v}^\pm$.
\end{lem}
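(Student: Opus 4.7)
The plan is to realize the forgetful map $\check{\bb{E}}_{J,v}^\pm \to \bb{E}_{J,v}^\pm$ as a well-defined morphism and prove that its fibers are singletons. Well-definedness is immediate: if $\bb{B}y^{-1}h\bb{B} = \bb{B}v\bb{B}$, then left-multiplying by $\bb{P}_J$ gives $\bb{P}_Jy^{-1}h\bb{B} = \bb{P}_Jv\bb{B}$, while the Springer condition $(u, y\bb{P}_J) \in \bb{Spr}_J^\pm$ depends only on the coset $y\bb{P}_J$ because $\bb{P}_J$ normalizes $\bb{U}_J$ and stabilizes $\bb{V}_J$ under conjugation. Over a fixed $(h\bb{B}, u, y\bb{P}_J) \in E_{J,v}^\pm$, writing $y' = yp$ with $p\bb{B}$ ranging over $\bb{P}_J/\bb{B}$ and setting $x \vcentcolon= y^{-1}h$, the fiber is identified with the set
\[
\{p\bb{B} \in \bb{P}_J/\bb{B} \mid p^{-1}x \in \bb{B}v\bb{B}\}.
\]

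The heart of the argument is to show this set is a singleton for every $x \in \bb{P}_Jv\bb{B}$. Existence follows by writing $x = p_0 v b_0$ with $p_0 \in \bb{P}_J$ and $b_0 \in \bb{B}$, so that $p_0^{-1}x = vb_0 \in \bb{B}v\bb{B}$. For uniqueness, if $p^{-1}x \in \bb{B}v\bb{B}$ as well, then $q \vcentcolon= p^{-1}p_0 \in \bb{P}_J$ satisfies $qv \in \bb{B}v\bb{B}$, and it suffices to show $q \in \bb{B}$, since this forces $p\bb{B} = p_0\bb{B}$. This reduces to proving the Bruhat-cell identity
\[
\bb{P}_J v \cap \bb{B}v\bb{B} = \bb{B}v,
\]
which I would deduce from the Bruhat decomposition $\bb{P}_J = \bigsqcup_{w \in \bb{W}_J} \bb{B}w\bb{B}$ together with the length-additivity $\ell(wv) = \ell(w) + \ell(v)$ for $w \in \bb{W}_J$, a defining property of $v \in W^{J,-}$. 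The additivity gives $\bb{B}w\bb{B} \cdot v \subseteq \bb{B}w\bb{B} \cdot \bb{B}v\bb{B} = \bb{B}wv\bb{B}$, so $\bb{P}_J v$ stratifies by $w \in \bb{W}_J$, and only the $w = e$ stratum meets $\bb{B}v\bb{B}$ since distinct Bruhat cells are disjoint. In the non-split case this argument runs with $\ell$ in place of $\ell_\bb{B}$ throughout, using \Cref{lem:bruhat-f} in place of \Cref{lem:bruhat}.

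For the split refinement, the same argument applies to all $\bar{\bb{F}}$-points, so the morphism is bijective on closed points. To upgrade to an isomorphism of varieties, I would decompose both $\check{\bb{E}}_{J,v}^\pm$ and $\bb{E}_{J,v}^\pm$ into locally closed subvarieties indexed by $w \in \bb{W}_J$ according to the Bruhat cell of $y^{-1}h\bb{B}$ in $\bb{B}wv\bb{B}/\bb{B} \subseteq \bb{P}_Jv\bb{B}/\bb{B}$, and verify that over each such stratum the inverse map is given by explicit algebraic Bruhat factorization. The main obstacle is the intersection identity above, which hinges on the $W_J$-coset minimality of $v$; once that is in hand, the rest is routine bookkeeping.
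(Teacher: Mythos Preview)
Your argument is correct and follows essentially the same route as the paper. The paper's proof is more compact: it simply cites the compatible bijections $U_v^- \xrightarrow{\sim} BvB/B$ and $U_v^- \xrightarrow{\sim} BvP_J/P_J$ for $v \in W^{J,-}$, which immediately gives that the forgetful map $BvB/B \to BvP_J/P_J$ is a bijection, whereas you unpack this via the Bruhat decomposition of $P_J$ and the identity $P_Jv \cap BvB = Bv$. For the split refinement, the paper again uses the variety-level parametrizations $\bb{U}_v^- \xrightarrow{\sim} \bb{B}v\bb{B}/\bb{B}$ and $\bb{U}_v^- \xrightarrow{\sim} \bb{B}v\bb{P}_J/\bb{P}_J$ from \Cref{lem:bruhat}(3) and its parabolic analogue, which gives the isomorphism directly and renders your proposed stratification unnecessary.
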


\begin{proof}
The first claim is just the fact that if $v$ minimizes $\ell$ in $W_Jv$, then there are compatible bijections from $U_v^-$ to the Schubert cells $BvB/B$ and $BvP_J/P_J$.

For the second claim:
As in the proof of \Cref{lem:j}, $v$ minimizes $\ell_\bb{B}$ in $\bb{W}_Jv$.
So we can repeat the argument above, but with the varieties $\bb{U}_v^-$, $\bb{B}$, $\bb{P}_J$ in place of the sets $U_v^-$, $B$, $P_J$, and isomorphisms in place of bijections.
\end{proof}

The varieties $\check{\bb{E}}_J^\pm$ are stable under the $\bb{G}$-action on $\bb{G}/\bb{B} \times \bb{G} \times \bb{G}/\bb{B}$ analogous to \eqref{eq:action}.
Let $\check{\mult} : \bb{G}/\bb{B} \times \bb{G} \times \bb{G}/\bb{B} \to (\bb{G}/\bb{B})^3$ be the equivariant map defined by 
\begin{align}
\check{\mult}(h\bb{B}, u, y\bb{B})
	= (h\bb{B}, y\bb{B}, uh\bb{B}).
\end{align}
The proofs of the two parts of \Cref{thm:main-v} will use $\check{\mult}$ in different ways.

\subsection{Proof of (1)}

In the notation of \Cref{sec:hecke},
\begin{align} 
\pr_{0, 2, !}\sf{1}_{O(v^{-1}, v)} 
	&= \sf{1}_{v^{-1}} \ast \sf{1}_{v\vphantom{^{-1}}}.
\end{align} 
This suggests comparing $\bb{E}_{J, v}^-$ to a bundle over $\bb{O}(v^{-1}, v)$.
It turns out that $\check{\bb{E}}_{J, v}^-$ is the bundle we seek.

Observe that if $(h\bb{B}, u, y\bb{B})$ is a point of $\check{\bb{E}}_{J, v}^-$, then $\bb{B}y^{-1}uh\bb{B} = \bb{B}y^{-1}h\bb{B} = \bb{B}v\bb{B}$.
Therefore, $\check{\mult}$ restricts to a map from $\check{\bb{E}}_{J, v}^-$ into $\bb{O}(v^{-1}, v)$, giving an equivariant commutative diagram:
\begin{equation}
\begin{tikzcd}
\check{\bb{E}}_{J, v}^- \ar{r} \ar{d}[left]{\check{\mult}}
	&\bb{E}_{J, v}^- \ar[bend left]{ddl}{\mult}\\
\bb{O}(v^{-1}, v) \ar{d}[left]{\pr_{0, 2}}\\
(\bb{G}/\bb{B})^2
\end{tikzcd}
\end{equation}
By \Cref{lem:check} and this diagram, we reduce case (1) of \Cref{thm:main-v} to:

\begin{prop}\label[prop]{prop:minus-case}
If $v \in W^{J, -}$, then
\begin{align} 
\check{\mult}_! \sf{1}_{\check{E}_{J, v}^-} = |U_{w_{J\circ}v}|\, \sf{1}_{O(v^{-1}, v)}
\end{align}
in $\cal{C}_G(O(v^{-1}, v))$.
In the split case, this identity arises from $\check{\mult} : \check{\bb{E}}_{J, v}^- \to \bb{O}(v^{-1}, v)$ being a smooth fiber bundle that restricts to a $\bb{U}_{w_{J\circ}v}$-torsor over the subvariety of $\bb{O}(v^{-1}, v)$ where $(g_0\bb{B}, g_1\bb{B}) = (v\bb{B}, \bb{B})$.
\end{prop}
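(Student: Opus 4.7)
The plan is to compute the fibers of $\check{\mult}$ directly. As the paper notes just before the proposition, the image of $\check{\mult}$ lies in $\bb{O}(v^{-1}, v)$, so the pushforward function is supported on $O(v^{-1}, v)$. Because $\check{\mult}$ is $G$-equivariant, I would reduce to a single basepoint: every point of $O(v^{-1}, v)$ can be $G$-translated to one of the form $(vB, B, g'vB)$ with $g' \in U_v^-$, using that $B$ acts transitively on $BvB/B$ and the parametrization $U_v^- \xrightarrow{\sim} BvB/B$ of \Cref{lem:bruhat-f}(3). The goal is then to show the fiber over such a basepoint has cardinality $|U_{w_{J\circ}v}|$ regardless of $g'$.

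Over the basepoint, the fiber consists of $u \in U_J$ with $uvB = g'vB$, which rearranges to $v^{-1}(g')^{-1}uv \in B$. Since $(g')^{-1}u \in U$, this is equivalent to $(g')^{-1}u \in U \cap vUv^{-1} = U_v$.

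Here is where \Cref{lem:j} enters. It gives the unique decomposition $U_J = U_v^- \cdot U_{w_{J\circ}v}$, in which $U_{w_{J\circ}v} \subseteq U_v$. Writing $u = g''u''$ with $g'' \in U_v^-$ and $u'' \in U_{w_{J\circ}v}$, the requirement $(g')^{-1}g''u'' \in U_v$ collapses to $(g')^{-1}g'' \in U_v \cap U_v^- = \{1\}$ by \Cref{lem:bruhat-f}(2), forcing $g'' = g'$ and leaving $u''$ free. The fiber is therefore the coset $g'U_{w_{J\circ}v}$, of cardinality $|U_{w_{J\circ}v}|$, which gives the first identity.

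For the split case, I would read the same calculation in the category of varieties. The preimage under $\check{\mult}$ of the subvariety $\{(v\bb{B}, \bb{B})\} \times \bb{B}v\bb{B}/\bb{B} \subseteq \bb{O}(v^{-1}, v)$ is identified with $\bb{U}_J$ via $u \mapsto (v\bb{B}, u, \bb{B})$, and the map down to $\bb{B}v\bb{B}/\bb{B} \cong \bb{U}_v^-$ becomes the projection coming from the splitting $\bb{U}_J = \bb{U}_v^- \cdot \bb{U}_{w_{J\circ}v}$ of \Cref{lem:j} -- a trivial $\bb{U}_{w_{J\circ}v}$-torsor. The smooth fiber bundle statement over $\bb{O}(v^{-1}, v)$ then follows by $\bb{G}$-equivariance together with the local triviality of the Bruhat cell bundles $\bb{O}(v) \to \bb{G}/\bb{B}$. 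There is no major obstacle here, only the bookkeeping to ensure that the decomposition in \Cref{lem:j} pairs off correctly with the two $U_v$-conditions appearing in the fiber.
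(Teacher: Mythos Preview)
Your proof is correct and follows essentially the same approach as the paper's: reduce by $G$-equivariance to the slice where $(g_0B, g_1B) = (vB, B)$, then use the factorization $U_J = U_v^- \cdot U_{w_{J\circ}v}$ from \Cref{lem:j} together with the Bruhat parametrization $U_v^- \xrightarrow{\sim} BvB/B$ to identify $\check{\mult}$ with the projection onto $U_v^-$. The only cosmetic difference is that the paper computes the map over the whole slice at once and recognizes it as a projection, whereas you compute each fiber $g'U_{w_{J\circ}v}$ individually; the content is the same.
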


\begin{proof}
For the first claim:
Recall that the $G$-action on pairs $(g_0B, g_1B) \in O(v^{-1})$ is transitive.
So by equivariance of $\check{\mult}$ and homogeneity, it suffices to compute $\check{\mult}$ over a subset of $O(v^{-1}, v)$ where these coordinates are fixed.

We take $(g_0B, g_1B) = (vB, B)$.
Over this pair, the fiber of $\check{E}_J^-$ consists of $(vB, u, B)$ with $u \in U_J$, the fiber of $O(v^{-1}, v)$ consists of $(vB, B, gB)$ with $gB \in BvB/B$, and $\check{\mult}$ is given by $u \mapsto uvB$.
Therefore, under the bijections $U_J \simeq U_{w_{J\circ}v} \times U_v^-$ of \Cref{lem:j} and $BvB/B \simeq U_v^-$ of \Cref{lem:bruhat-f}(3), $\check{\mult}$ corresponds to the projection $U_{w_{J\circ}v} \times U_v^- \to U_v^-$.
This proves the claim.

For the second claim:
As in the proof of \Cref{lem:j}, we observe that $v$ minimizes $\ell_\bb{B}$ in $\bb{W}_Jv$.
So we can repeat the arguments above with the varieties $\bb{G}$, $\bb{O}(v)$, \emph{etc.}\ in place of the sets $G$, $O(v)$, \emph{etc.}, and \Cref{lem:bruhat} in place of \Cref{lem:bruhat-f}.
\end{proof}

\subsection{Proof of (2)}

In the notation of \Cref{sec:hecke}, particularly \S\ref{subsec:longest}, 
\begin{align} 
\pr_{0, 4, !}\sf{1}_{O(v^{-1}, w_{J\circ}, w_{J\circ}, v)} 
	&=	\sf{1}_{v^{-1}} \ast \sf{1}_{w_{J\circ}}^2 \ast \sf{1}_{v\vphantom{^{-1}}}.
\end{align} 
This suggests comparing $\bb{E}_{J, v}^+$ to a bundle over $\bb{O}(v^{-1}, w_{J\circ}, w_{J\circ}, v)$.
But unlike the situation in case (1), there is no obvious map from $\check{\bb{E}}_{J, v}^+$ into the latter variety.

We do know that $\check{\mult}$ restricts to a map from $\check{\bb{E}}_{J, v}^+$ into $\bb{O}(v^{-1}) \times \bb{G}/\bb{B}$, giving an equivariant commutative diagram:
\begin{equation}	
\begin{tikzcd}
\check{\bb{E}}_{J, v}^+ \ar{r} \ar{d}[left]{\check{\mult}}
	&\bb{E}_{J, v}^+ \ar[bend left]{ddl}{\mult}\\
	\bb{O}(v^{-1}) \times \bb{G}/\bb{B} \ar{d}[left]{\pr_0 \times \id}\\
	(\bb{G}/\bb{B})^2
\end{tikzcd}
\end{equation}
At the same time, we have a map
\begin{align}
\bb{O}(v^{-1}, w_{J\circ}, w_{J\circ}, v)
	\xrightarrow{\pr_{0,1,4}} \bb{O}(v^{-1}) \times \bb{G}/\bb{B}.
\end{align}
So by \Cref{lem:check} and this discussion, we reduce case (2) of \Cref{thm:main-v} to:

\begin{prop}\label[prop]{prop:plus-case}
If $v \in W^{J, -}$, then
\begin{align}
\check{\mult}_! \sf{1}_{E_{J, v}^+}
=
	|U_{w_{J\circ}v}|\,
	\pr_{0,1,4,!}\sf{1}_{O(v^{-1}, w_{J\circ}, w_{J\circ}, v)}
\end{align}
in $\cal{C}_G(O(v^{-1}) \times G/B)$.
\end{prop}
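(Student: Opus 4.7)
The plan is to reduce \Cref{prop:plus-case} for general $J$ and $v$ to Kawanaka's theorem applied to the Levi subgroup $L_J$. By $G$-equivariance of $\check{\mult}$ for the diagonal action on $\bb{O}(v^{-1}) \times \bb{G}/\bb{B}$, and since $G$ acts transitively on $O(v^{-1})$, it suffices to check the claimed equality of $G$-invariant functions at configurations $(vB, B, xB)$ as $xB$ ranges over $G/B$. Under this normalization the fiber of $\check{\mult}$ is $\{u \in V_J : uvB = xB\}$, so the proposition reduces to showing
\[
|V_J \cap xBv^{-1}| = |U_{w_{J\circ}v}| \cdot (\sf{1}_{w_{J\circ}}^2 \ast \sf{1}_v)(B, xB).
\]

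The key tool for the left-hand side is the Levi factorization $V_J = V_{L_J} \cdot U_J$, which follows from $P_J = L_J \ltimes U_J$ together with the fact that an element $\ell n$ of $P_J$ is unipotent iff its Levi component $\ell$ lies in $V_{L_J}$. Writing $u = \ell n$ uniquely, the condition $uvB = xB$ becomes $nvB = \ell^{-1}xB$, and for each fixed $\ell$ the number of $n \in U_J$ satisfying this is precisely the $-$ case fiber of $\check{\mult}$ over $(vB, B, \ell^{-1}xB)$. By \Cref{prop:minus-case} this count equals $|U_{w_{J\circ}v}|$ when $\ell^{-1}x \in BvB$ and zero otherwise, so summing over $\ell$ collapses the left-hand side to $|U_{w_{J\circ}v}| \cdot |\{\ell \in V_{L_J} : \ell^{-1}x \in BvB\}|$.

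To finish, I would expand the right-hand side as $\sum_{g_3B} (\sf{1}_{w_{J\circ}}^2)(B, g_3B) \, \sf{1}_v(g_3B, xB)$. Since $\sf{1}_{w_{J\circ}}^2$ is supported on $P_J$-relative positions, $g_3B$ ranges over $P_J/B$, and the canonical bijection $P_J/B \cong L_J/B_J$ sending $u_3 \ell_3 B$ to $\ell_3 B_J$ identifies $(\sf{1}_{w_{J\circ}}^2)^G(B, g_3B)$ with the $L_J$-convolution $(\sf{1}_{w_{J\circ}}^2)^{L_J}(B_J, \ell_3 B_J)$, because $H_{B_J}^{L_J}$ embeds as the subalgebra of $H_B^G$ spanned by $\sf{1}_w$ for $w \in W_J$. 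Kawanaka's theorem---equivalent to the $J = S$, $v = e$ specialization of \Cref{prop:plus-case} applied to $L_J$---then asserts $(\sf{1}_{w_{J\circ}}^2)^{L_J}(B_J, \ell_3 B_J) = |V_{L_J} \cap \ell_3 B_J|$. Substituting and swapping the order of summation, and using that the condition $g_3^{-1}x \in BvB$ depends only on the class $\ell_3 B_J$, converts the right-hand side into $\sum_{\ell \in V_{L_J}} [\ell^{-1}x \in BvB]$, matching the left.

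The principal obstacle is the invocation of Kawanaka's theorem, flagged in the introduction as the deep ingredient responsible for the additional difficulty of the $+$ case over the $-$ case; once it is granted, the remaining steps are bookkeeping with Bruhat decomposition and the Levi structure of $P_J$, using \Cref{lem:j} to keep track of the compatibility between $U_v^-$, $U_J$, and $U_{w_{J\circ}v}$.
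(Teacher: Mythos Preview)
Your argument is correct, and it takes a genuinely different route from the paper's. The paper fixes $hB = B$, computes both sides as explicit counts of the form $|yV_Jy^{-1} \cap BwB|$ and $|U_{w_{J\circ}}^-|\,|(w_{J\circ}v)^{-1}U_{w_{J\circ}v}^- w_{J\circ}v \cap BwB|$, and then invokes Kawanaka's Theorem~4.1 for $G$ with the parabolic $P_J$ to compare them directly. You instead use the Levi factorization $V_J = V_{L_J}\cdot U_J$ to split the count: the $U_J$-part is dispatched by the already-proven $-$ case (\Cref{prop:minus-case}), while the $V_{L_J}$-part is dispatched by Kawanaka's theorem applied to the Levi $L_J$ alone (equivalently, the $J' = S_{L_J}$, $v = e$ instance of the proposition for $L_J$). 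Your approach is more structural, making transparent that the hard input lives entirely in the Levi and that the passage from $L_J$ to $G$ is governed by the elementary $-$ case; the paper's approach, while more computational in its manipulation of the Bott--Samelson side, exhibits the precise form of Kawanaka's identity that controls the general $(J,v)$ case in one stroke. Both rest on the same deep ingredient.
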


\begin{proof}
Since the $O(w)$ partition $(G/B)^2$, it suffices to fix $w \in W$ and restrict to
\begin{align} 
O(v^{-1}) \times_w G/B = \{(hB, yB, gB) \in O(v^{-1}) \times G/B \mid Bh^{-1}gB = BwB\},
\end{align}
the preimage of $O(w)$ along $\pr_0 \times \id$.
Recall that the $G$-action on $O(w)$ is transitive.
So by equivariance and homogeneity, the fibers of $\check{E}_{J, v}^+$ and $O(v^{-1}, w_{J\circ}, w_{J\circ}, v)$ have constant size over $O(v^{-1}) \times_w G/B$.
So it suffices to compare them over a subvariety of $O(v^{-1}) \times_w G/B$ where the coordinates $(hB, gB)$ are fixed.
Moreover, to do this, it suffices to fix $hB$ and average over $gB \in hBwB/B$.

We take $hB = B$.
Then we must compare the preimages of 
\begin{align}\label{eq:b-yb-gb}
\{(B, yB, gB) \in O(v^{-1}) \times_w G/B\}
\end{align}
in $\check{E}_J^+$ and $O(v^{-1}, w_{J\circ}, w_{J\circ}, v)$.
Since $v \in W^{J, -}$, we can trade the latter set and the map $\pr_{0,1,4}$ for the set $O(v^{-1}, w_{J\circ}, w_{J\circ}v)$ and the map $\pr_{0,1,3}$.

The preimage of \eqref{eq:b-yb-gb} in $\check{E}_J^+$ consists of $(B, u, yB)$ such that $u \in yV_Jy^{-1}$ and $u \in BwB$.
Hence it has size
\begin{align}\label{eq:kawanaka-1}
|yV_Jy^{-1} \cap BwB|.
\end{align}
The preimage of \eqref{eq:b-yb-gb} in $O(v^{-1}, w_{J\circ}, w_{J\circ}v)$ consists of $(B, yB, zB, gB)$ such that
\begin{align}\label{eq:zb-gb-1}
yB \xleftarrow{w_{J\circ}} zB \xrightarrow{w_{J\circ}v} gB
\end{align} 
and $gB \in BwB/B$.
Observe that $yB \in Bv^{-1}B/B$, so homogeneity under left multiplication by $B$ lets us count the preimage for a given $yB$ by averaging over the preimages for all $yB \in Bv^{-1}B/B$.
Since $v \in W^{J, -}$, \Cref{lem:bruhat-f}(1) shows that the union of these preimages is parametrized by $(zB, gB)$ such that
\begin{align}\label{eq:zb-gb-2}
B \xleftarrow{w_{J\circ}v} zB \xrightarrow{w_{J\circ}v} gB
\end{align} 
and $gB \in BwB/B$.
It also shows that there is a bijection from $U_{(w_{J\circ}v)^{-1}}^- \times U_{w_{J\circ}v\vphantom{^{-1}}}^-$ to the set of pairs $(zB, gB)$ satisfying \eqref{eq:zb-gb-2}, given by
\begin{align} 
(u, u') \mapsto (u(w_{J\circ}v)^{-1}B, u(w_{J\circ}v)^{-1}u'w_{J\circ}vB).
\end{align}
So the set of $(zB, gB)$ satisfying \eqref{eq:zb-gb-2} and $gB \in BwB/B$ is parametrized by
\begin{align}
(U_{(w_{J\circ}v)^{-1}}^- (w_{J\circ}v)^{-1} U_{w_{J\circ}v}^-w_{J_\circ}v) \cap BwB.
\end{align}
Since $U_{(w_{J\circ}v)^{-1}}^- \subseteq B$, this last set can be identified with
\begin{align}
U_{(w_{J\circ}v)^{-1}}^- \times ((w_{J\circ}v)^{-1} U_{w_{J\circ}v}^-w_{J_\circ}v \cap BwB).
\end{align}
By \Cref{lem:bruhat-f}(3), we have $|U_{v^{-1}}^-|$ many choices for $yB \in Bv^{-1}B/B$, and since $v \in W^{J, -}$, we also have $|U_{(w_{J\circ}v)^{-1}}^-| = |U_{w_{J\circ}}^-| |U_{v^{-1}}^-|$.
Altogether, we conclude that the size of the preimage of \eqref{eq:b-yb-gb} in $O(v^{-1}, w_{J\circ}, w_{J\circ}v)$ is 
\begin{align}\label{eq:kawanaka-2}
|U_{w_{J\circ}}^-| |(w_{J\circ}v)^{-1} U_{w_{J\circ}v}^-w_{J_\circ}v \cap BwB|.
\end{align}
Finally, we compare \eqref{eq:kawanaka-1} and \eqref{eq:kawanaka-2}.
Theorem 4.1 of \cite{kawanaka} says
\begin{align}
|yV_Jy^{-1} \cap BwB| = |U_{v^{-1}}| |(w_{J\circ}v)^{-1} U_{w_{J\circ}v}^-w_{J_\circ}v \cap BwB|.
\end{align}
Again using $|U_{(w_{J\circ}v)^{-1}}^-| = |U_{w_{J\circ}}^-| |U_{v^{-1}}^-|$, we see that $|U_{v^{-1}}| = |U_{(w_{J\circ}v)^{-1}}| |U_{w_{J\circ}}^-| = |U_{w_{J\circ}v}| |U_{w_{J\circ}}^-|$, giving the desired identity.
\end{proof}

\begin{rem}
The asymmetry of the variety $\bb{O}(v^{-1}) \times \bb{G}/\bb{B}$ may seem defective.
To make the geometry more symmetrical, one might try to replace the diagram
\begin{align} 
\bb{E}_J^+ \xrightarrow{\check{\mult}} \bb{O}(v^{-1}) \times \bb{G}/\bb{B} \xleftarrow{\pr_{0,1,4}} \bb{O}(v^{-1}, w_{J\circ}, w_{J\circ}, v)
\end{align} 
with the diagram
\begin{align} 
\bb{E}_J^+ \xrightarrow{\check{\mult}'} \bb{O}(v^{-1}) \times \bb{O}(v) \xleftarrow{\pr_{0,1,3,4}} \bb{O}(v^{-1}, w_{J\circ}, w_{J\circ}, v)
\end{align} 
in which $\check{\mult}'(h\bb{B}, u, x\bb{B}) = (h\bb{B}, x\bb{B}, ux\bb{B}, uh\bb{B})$.
Then one would hope that
\begin{align}
\check{\mult}'_! \sf{1}_{E_{J, v}^+}
	=
	|U_J|\,
	\pr_{0,1,3,4,!}\sf{1}_{O(v^{-1}, w_{J\circ}, w_{J\circ}, v)}
\end{align}
in $\cal{C}_G(O(v^{-1}) \times O(v))$.
However, Kawanaka's work does not seem to establish this stronger identity.
\end{rem}

\section{Traces on the Hecke Algebra}\label{sec:trace}

\subsection{}

The goal of this section is to prove a version of \Cref{thm:trace} for general $G$, and deduce \Cref{cor:exotic} for split $G$.
We keep the general setup of \Cref{sec:hecke}.

\subsection{Traces from Relative Norms} 

As in \S\ref{subsec:trace-intro}, let $\tau : H_B^G \to \bb{Z}[\tfrac{1}{q}]$ be the trace given by $\tau(\sf{1}_e) = 1$ and $\tau(\sf{1}_w) = 0$ for all $w \neq e$, and for any central element $\zeta \in Z(H_B^G)$, let $\tau[\zeta] : H_B^G \to \bb{Z}[\tfrac{1}{q}]$ be the trace given by $\tau[\zeta](\beta) = \tau(\beta \ast \zeta)$.

\begin{lem}\label[lem]{lem:tau-norm}
	For all $J \subseteq S$ and $w \in W$ and $\alpha \in Z(H_{B_J}^{L_J})$, we have
	\begin{align}
	\frac{1}{|B|}\,\tau[N_J^S(\alpha)](\sf{1}_w) 
		&= \frac{1}{|G|}
		\sum_{(hB, gB) \in O(w)} 
		N_J^S(\iota(\alpha))(hB, gB),
	\end{align}
	where $\iota$ is the additive anti-involution of $H_{B_J}^{L_J}$ given by $\iota(\sf{1}_{w\vphantom{^{-1}}}) = \sf{1}_{w^{-1}}$.
\end{lem}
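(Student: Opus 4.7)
The plan is to unfold both sides of the identity into concrete sums over $G/B$, then match them using a ``transpose'' anti-involution of the full algebra $H_B^G$.

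First, I would record a basic fact about $\tau$: for any $\varphi = \sum_u c_u\, \sf{1}_u \in H_B^G$, we have $\varphi(gB, gB) = c_e$ for every $gB$, since $\sf{1}_u(gB, gB) = 0$ unless $u = e$. Averaging over $G/B$ and using $|G/B| = |G|/|B|$ yields the identity
\begin{align}
\tau(\varphi) = \frac{|B|}{|G|} \sum_{gB \in G/B} \varphi(gB, gB).
\end{align}
Setting $\varphi = \sf{1}_w \ast N_J^S(\alpha)$, expanding the convolution, and using that $\sf{1}_w(gB, zB) = 1$ exactly when $(gB, zB) \in O(w)$, one obtains after relabeling
\begin{align}
\frac{1}{|B|}\,\tau[N_J^S(\alpha)](\sf{1}_w)
    = \frac{1}{|G|} \sum_{(hB, gB) \in O(w)} N_J^S(\alpha)(gB, hB).
\end{align}

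The remaining task is to identify the reversed-argument function $(hB, gB) \mapsto N_J^S(\alpha)(gB, hB)$ with $N_J^S(\iota(\alpha))(hB, gB)$. I would extend $\iota$ to an operation on all of $H_B^G$ by $\iota(\varphi)(hB, gB) \vcentcolon= \varphi(gB, hB)$. A short calculation, reversing the order of summation in the convolution, shows that this extension is an anti-multiplicative involution of $H_B^G$ that restricts to the given map $\sf{1}_w \mapsto \sf{1}_{w^{-1}}$ on $H_{B_J}^{L_J}$. Applying $\iota$ term-by-term to
\begin{align}
N_J^S(\alpha) = \sum_{v \in W^{J, -}} \frac{1}{|U_v^-|}\, \sf{1}_{v^{-1}} \ast \alpha \ast \sf{1}_v
\end{align}
and invoking anti-multiplicativity together with $\iota(\sf{1}_v) = \sf{1}_{v^{-1}}$ gives $\iota(N_J^S(\alpha)) = N_J^S(\iota(\alpha))$, which supplies the required identification and finishes the proof.

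I do not expect any genuine obstacle here, as the argument is essentially bookkeeping: $\tau$ rewritten as an average over the diagonal of $(G/B)^2$ becomes, after convolution with $\sf{1}_w$, an average over $O(w)$ with the two arguments of $N_J^S(\alpha)$ swapped, and this swap is exactly accounted for by the anti-involution $\iota$. The only care needed is to correctly balance the factors $|B|/|G|$ against the size of $O(w)$ and to track which argument of $N_J^S(\alpha)$ is which when the convolution is expanded. Note the centrality hypothesis on $\alpha$ is not used in the manipulation itself; it is present only to ensure that $N_J^S(\alpha)$ lies in $Z(H_B^G)$, so that $\tau[N_J^S(\alpha)]$ is meaningful as a trace.
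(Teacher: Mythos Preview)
Your argument is correct and follows essentially the same route as the paper: both begin by rewriting $\tau$ as an average over the diagonal of $(G/B)^2$, then unfold the convolution with $\sf{1}_w$ into a sum over $O(w)$. The only cosmetic difference is that the paper carries out the ``swap'' step by expanding $\alpha$ in the basis $(\sf{1}_z)_z$ and exhibiting an explicit bijection of tuples in $O(w, v^{-1}, z, v)$ with tuples in $O(v^{-1}, z^{-1}, v)$ over $O(w)$, whereas you package the same move as the transpose anti-involution $\iota$ extended to all of $H_B^G$ and the observation $\iota \circ N_J^S = N_J^S \circ \iota$; these are the same computation in different clothing.
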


\begin{proof}
	For any $\beta \in H_B^G$ and $xB \in G/B$, we have $\tau(\beta) = \beta(xB, xB)$.
	Moreover, $|G/B| = |G|/|B|$.
	So for any $\zeta \in Z(H_B^G)$, we have
	\begin{align}
	\frac{|G|}{|B|}\, \tau[\zeta](\beta)
		&= \sum_{xB \in G/B} (\beta \ast \zeta)(xB, xB).
	\end{align}
	Next, for any $w, v, z \in W$, observe that there is a bijection
	\begin{align}\label{eq:richardson-w}
		&\{(x_0B, x_1B, x_2B, x_3B, x_4B) \in O(w, v^{-1}, z, v) \mid x_0B = x_4B\}\\
		&\qquad\xrightarrow{\sim}
		\{(g_0B, g_1B, g_2B, g_3B) \in O(v^{-1}, z^{-1}, v) \mid g_0B \xrightarrow{w} g_3B\}
	\end{align}
	given by $(g_0B, g_1B, g_2B, g_3B) = (x_4B, x_3B, x_2B, x_1B)$.
	This shows the identity
	\begin{align}
		\sum_{gB \in G/B} (\sf{1}_{w\vphantom{^{-1}}} \ast \sf{1}_{v^{-1}} \ast \sf{1}_{z\vphantom{^{-1}}} \ast \sf{1}_{v\vphantom{^{-1}}})(gB, gB)
		= \sum_{(hB, gB) \in O(w)}
		(\sf{1}_{v^{-1}} \ast \sf{1}_{z^{-1}} \ast \sf{1}_{v\vphantom{^{-1}}})(hB, gB).
	\end{align}
	By expanding $\alpha$ in the basis $(\sf{1}_z)_{z \in W_J}$ for $H_{B_J}^{L_J}$, and summing over all $v \in W^{J, -}$, we deduce that
	\begin{align}
		\sum_{xB \in G/B} (\beta \ast N_J^S(\alpha))(xB, xB)
		= \sum_{(hB, gB) \in O(w)} 
		N_J^S(\iota(\alpha))(hB, gB),
	\end{align}
	concluding the proof.
\end{proof}

\subsection{Springer Fibers}\label{subsec:springer}

A reference for this subsection is \cite{shoji}.

In order to work with \'etale cohomology, we fix a prime $\ell$ invertible in $\bb{F}$.
The notation $\ur{H}^\ast(-, \QL)$ will always mean \'etale cohomology with coefficients in the constant $\QL$-sheaf.
Henceforth, let $\bb{V} = \bb{V}_S$ and 
\begin{align} 
\bb{Spr} = \bb{Spr}_\emptyset^+ = \bb{Spr}_\emptyset^- \subseteq \bb{V} \times \bb{G}/\bb{B}.
\end{align} 
By the \dfemph{Springer resolution}, we mean either $\bb{Spr}$ or the projection map from $\bb{Spr}$ onto $\bb{V}$.
For any $u \in \bb{V}$, the \dfemph{Springer fiber} over $u$ is the (reduced) fiber of this map over $u$, viewed as a subvariety $\bb{Spr}_u$ of $\bb{G}/\bb{B}$.
On points,
\begin{align}
\bb{Spr}_u = \{y\bb{B} \in \bb{G}/\bb{B} \mid u \in y\bb{U}y^{-1}\}.
\end{align}
Springer showed that this is a projective variety with no odd cohomology.
For $u \in V \vcentcolon= \bb{V}^F$, he constructed an action of $W$ on $\ur{H}^\ast(\bb{Spr}_u)$ through a type of Fourier transform.
Later, other authors gave independent constructions, generalizing to other base fields like the complex numbers.

In this paper, we use the $W$-action on $\ur{H}^\ast(\bb{Spr}_u)$ constructed through perverse sheaf theory, which differs from Springer's original action by a sign twist.
Let $\chi_u : \bb{Q}W \to \QL$ be the trace defined by
\begin{align}
\chi_u(w) 
	= \tr(Fw \mid \ur{H}^\ast(\bb{Spr}_u)).
\end{align}
For our choice of action, the sign character of $W$ only occurs in $\chi_1$.

As reviewed in \cite[\S{15}]{shoji}, it is now known $\chi_u$ arises from the specialization at $\X \to q$ of a $\bb{Z}[\X]$-valued trace on $\bb{Z}W$.
In particular, $\chi_u(w) \in \bb{Z}$ for all $w \in W$.

\subsection{Partial Springer Fibers}\label{subsec:symmetrizer}

For all $J \subseteq S$, the \dfemph{symmetrizer} and \dfemph{antisymmetrizer} in $\bb{Q}W_J$ are respectively defined by
\begin{align}
e_{J, +} = \frac{1}{|W_J|} \sum_{w \in W_J} w
	\quad\text{and}\quad
	e_{J, -} = \frac{1}{|W_J|} \sum_{w \in W_J} (-1)^{\ell(w)} w.
\end{align} 
These are central elements of $\bb{Q}W_J$, such that $\bb{Q}W_Je_{J, +}$ and $\bb{Q}W_Je_{J, -}$ respectively afford the trivial and sign representations of $W_J$.

Borho--MacPherson related $e_{J, -}$ and $e_{J, +}$ to the \dfemph{partial Springer fibers}
\begin{align}
\bb{Spr}_{J, u}^- 
	&= \{y\bb{P}_J \in \bb{G}/\bb{P}_J \mid u \in y\bb{U}_Jy^{-1}\},\\
\bb{Spr}_{J, u}^+
	&= \{y\bb{P}_J \in \bb{G}/\bb{P}_J \mid u \in y\bb{V}_Jy^{-1}\}.
\end{align}
By \S\ref{subsec:f-fixed}, the set of $F$-fixed points $\mathit{Spr}_{J, u}^-$, \emph{resp.}\ $\mathit{Spr}_{J, u}^+$, is the set of $yP_J \in G/P_J$ such that $u \in yU_Jy^{-1}$, \emph{resp.}\ $u \in yV_Jy^{-1}$.
For our choice of Springer action, the main result of \cite{bm} implies that for all $J \subseteq S$ and $u \in V$, we have
\begin{align}\begin{split}\label{eq:bm}
\frac{1}{ |U_{w_{J\circ}}^-|}\, \chi_u(e_{J, -})
	&= |\mathit{Spr}_{J, u}^-|,\\
\chi_u(e_{J, +})
	&= |\mathit{Spr}_{J, u}^+|.
\end{split}\end{align}
More precisely, these results come from transferring Borho--MacPherson's arguments from sheaves in the analytic topology over $\bb{C}$ to sheaves in the \'etale topology over $\bar{\bb{F}}$, and keeping track of Tate twists arisng from the $\bb{F}$-structure.
The factor of $|U_{w_{J\circ}}^-| = q^{\dim(\bb{L}_J/\bb{B}_J)}$ in the $-$ case arises from a Tate twist of order $2 \dim(\bb{L}_J/\bb{B}_J)$ that accompanies the cohomological shift in case (b) of \cite[\S{3.4}]{bm}.

\subsection{The Bitrace}\label{subsec:bitrace}

As in \S\ref{subsec:trace-intro}, let $O(w)_u$ be the subset of $O(w)$ of pairs taking the form $(hB, uhB)$.
Let $\tau_G : \bb{Q}W \otimes H_B^G \to \bb{Q}$ be defined by
\begin{align}
	\tau_G(z \otimes \sf{1}_w)
	= \frac{1}{|G|}
	\sum_{u \in V}
	|O(w)_u| \chi_u(z).
\end{align}
The framework of \cite{trinh} shows that this is, indeed, a bitrace, meaning $\tau_G(z \otimes \whitearg)$ and $\tau_G(\whitearg \otimes \sf{1}_w)$ are traces for all $z, w \in W$.
In the split case, it recovers the $\X \to q$ specialization of the trace denoted $\tau_G$ in the introduction.

\begin{lem}\label[lem]{lem:tau-g}
For all $J \subseteq S$ and $w \in W$, we have
\begin{align}
\frac{1}{|U_{w_{J\circ}}^-|}\,\tau_G(e_{J, -} \otimes \sf{1}_w) 
	&= \frac{1}{|G|} \sum_{(hB, gB) \in O(w)}
	\mult_!\sf{1}_{E_J^-}(hB, gB),\\
\tau_G(e_{J, +} \otimes \sf{1}_w) 
	&= \frac{1}{|G|} \sum_{(hB, gB) \in O(w)}
	\mult_!\sf{1}_{E_J^+}(hB, gB),
\end{align}
where $E_J^\pm$ and $\mult$ are defined as in \Cref{sec:springer}.
\end{lem}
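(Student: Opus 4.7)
The plan is to unpack both sides of each identity as sums over Springer fibers, then invoke the Borho--MacPherson formulas \eqref{eq:bm} to match the combinatorial and the character-theoretic descriptions.

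First I would expand the right-hand side by unwinding the definition of $\mult_!\sf{1}_{E_J^\pm}$. For a point $(hB, gB) \in O(w)$, the fiber $\mult^{-1}(hB, gB) \cap E_J^\pm$ consists of triples $(hB, u, yP_J)$ with $(u, yP_J) \in \mathit{Spr}_J^\pm$ and $uhB = gB$. Writing this as a double sum and swapping the order of summation,
\begin{align}
\sum_{(hB, gB) \in O(w)} \mult_!\sf{1}_{E_J^\pm}(hB, gB)
&= \sum_{(u, yP_J) \in \mathit{Spr}_J^\pm}\;\sum_{(hB, gB) \in O(w)} [uhB = gB]\\
&= \sum_{(u, yP_J) \in \mathit{Spr}_J^\pm} |O(w)_u|,
\end{align}
since for fixed unipotent $u$, the set $\{hB : (hB, uhB) \in O(w)\}$ is precisely $O(w)_u$ (projected to its first coordinate, but these are equinumerous).

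Second, since $\mathit{Spr}_J^\pm$ only contains pairs $(u, yP_J)$ with $u$ unipotent, I would rewrite the outer sum as ranging over $u \in V$ with the inner count being $|\mathit{Spr}_{J, u}^\pm|$:
\begin{align}
\sum_{(u, yP_J) \in \mathit{Spr}_J^\pm} |O(w)_u|
= \sum_{u \in V} |\mathit{Spr}_{J, u}^\pm| \cdot |O(w)_u|.
\end{align}
Dividing by $|G|$, the right-hand side of the lemma becomes
\[
\frac{1}{|G|} \sum_{u \in V} |\mathit{Spr}_{J, u}^\pm| \cdot |O(w)_u|.
\]

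Third, I would apply \eqref{eq:bm} from Borho--MacPherson to translate $|\mathit{Spr}_{J, u}^\pm|$ into characters: $|\mathit{Spr}_{J, u}^+| = \chi_u(e_{J, +})$ and $|\mathit{Spr}_{J, u}^-| = \chi_u(e_{J, -})/|U_{w_{J\circ}}^-|$. Substituting into the expression above and comparing with the definition $\tau_G(z \otimes \sf{1}_w) = \frac{1}{|G|} \sum_u |O(w)_u|\chi_u(z)$ from \S\ref{subsec:bitrace} yields the two desired identities, with the $|U_{w_{J\circ}}^-|$ factor in the $-$ case appearing exactly as needed.

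The proof is essentially bookkeeping: both the fiber count of $\mult$ over $O(w)$ and the bitrace $\tau_G$ secretly sum the same quantity $|\mathit{Spr}_{J,u}^\pm| \cdot |O(w)_u|$ over unipotent $u$, once the order of summation is reversed and \eqref{eq:bm} is invoked. The only subtlety is keeping track of the Tate-twist factor $|U_{w_{J\circ}}^-|$ coming from the $-$ case of \eqref{eq:bm}, which is precisely the normalization built into the lemma statement.
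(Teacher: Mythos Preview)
Your proposal is correct and follows essentially the same approach as the paper. The paper's proof is even more terse: it applies \eqref{eq:bm} to the definition of $\tau_G$ and then records the set-theoretic identity $\coprod_{u \in V} O(w)_u \times \mathit{Spr}_{J,u}^\pm = \coprod_{(hB,gB) \in O(w)} \mult^{-1}(hB,gB)$, which is exactly your swap of summation order phrased as a bijection of index sets.
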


\begin{proof}
Apply \eqref{eq:bm} to the formula for $\tau_G$.
Then observe that
\begin{align}\label{eq:steinberg-w}
\coprod_{u \in V}
	O(w)_u \times \mathit{Spr}_{J, u}^\pm
	&= 
		\{(hB, u, yP_J) \in E_J^\pm \mid (hB, uhB) \in O(w)\}\\
	&=
		\coprod_{(hB, gB) \in O(w)}
			\mult^{-1}(hB, gB).\hfill\qedhere
\end{align}
\end{proof}

The split case of the following result is the $\X \to q$ specialization of \Cref{thm:trace}.
Since it amounts to a family of identities of Laurent polynomials in $q$, which hold for infinitely many $q$, we can lift it from $q$ to $\X$.

\begin{thm}\label[thm]{thm:trace-geo}
For any $J \subseteq S$, we have
\begin{align}
\tau[N_J^S(1)] 
	&= |T|\,
	\tau_G(e_{J, -} \otimes \whitearg),\\
\tau[N_J^S(\sf{1}_{w_{J\circ}}^2)] 
	&= |B_J|\,
	\tau_G(e_{J, +} \otimes \whitearg)
\end{align}
as traces on $H_W$.
\end{thm}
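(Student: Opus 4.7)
The plan is to reduce both identities to pointwise checks on the Iwahori basis $\{\sf{1}_w\}_{w \in W}$ of $H_B^G$, and then combine \Cref{lem:tau-norm} with \Cref{lem:tau-g} via \eqref{eq:main}.  Since traces are linear, it suffices to evaluate the left-hand sides on $\sf{1}_w$ for an arbitrary $w \in W$.

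Fix $w \in W$.  For the minus case, apply \Cref{lem:tau-norm} with $\alpha = 1$ (so $\iota(\alpha)=1$) to obtain
\begin{align}
\tau[N_J^S(1)](\sf{1}_w)
    = \frac{|B|}{|G|} \sum_{(hB, gB) \in O(w)} N_J^S(1)(hB, gB).
\end{align}
The minus identity of \eqref{eq:main} rewrites $N_J^S(1) = |U_J|^{-1}\, \mult_! \sf{1}_{E_J^-}$, so the sum equals $|U_J|^{-1} \sum_{(hB,gB) \in O(w)} \mult_!\sf{1}_{E_J^-}(hB, gB)$, which by \Cref{lem:tau-g} equals $|G| |U_J|^{-1} |U_{w_{J\circ}}^-|^{-1}\, \tau_G(e_{J, -} \otimes \sf{1}_w)$.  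Assembling the constants gives $\tau[N_J^S(1)](\sf{1}_w) = \frac{|B|}{|U_J||U_{w_{J\circ}}^-|}\,\tau_G(e_{J,-} \otimes \sf{1}_w)$.  Since $U_J = U_{w_{J\circ}}$, \Cref{lem:bruhat-f}(2) gives $|U| = |U_{w_{J\circ}}||U_{w_{J\circ}}^-| = |U_J||U_{w_{J\circ}}^-|$, and combined with $|B| = |T| |U|$ this yields the desired factor $|T|$.

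For the plus case, apply \Cref{lem:tau-norm} with $\alpha = \sf{1}_{w_{J\circ}}^2$.  Because $w_{J\circ}$ is an involution, $\iota(\sf{1}_{w_{J\circ}}) = \sf{1}_{w_{J\circ}}$, and since $\iota$ is an anti-involution, $\iota(\sf{1}_{w_{J\circ}}^2) = \sf{1}_{w_{J\circ}}^2$.  Proceeding exactly as before but using the plus identity of \eqref{eq:main} and the plus case of \Cref{lem:tau-g}, we obtain
\begin{align}
\tau[N_J^S(\sf{1}_{w_{J\circ}}^2)](\sf{1}_w) = \frac{|B|}{|U_J|}\, \tau_G(e_{J,+} \otimes \sf{1}_w).
\end{align}
Finally, the Levi decomposition $P_J = L_J \ltimes U_J$ combined with $B = B_J \cdot U_J$ (since $B \subseteq P_J$ with $B_J = L_J \cap B$) gives $|B| = |B_J||U_J|$, producing the constant $|B_J|$.

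The heart of the argument is already done by \Cref{lem:tau-norm} (which unwinds $\tau[-]$ applied to a relative norm into a sum over $O(w)$) and by \Cref{lem:tau-g} (which packages the Springer-theoretic identity \eqref{eq:bm} of Borho--MacPherson into a geometric formula for the bitrace).  The only delicate point is tracking the three different sources of normalizing constants --- the $|B|/|G|$ from \Cref{lem:tau-norm}, the $|U_J|$ from \eqref{eq:main}, and the Tate twist $|U_{w_{J\circ}}^-|$ appearing only in the minus case of \Cref{lem:tau-g}.  The asymmetry in the final constants $|T|$ versus $|B_J|$ precisely reflects this asymmetric Tate-twist contribution, coming ultimately from the cohomological shift in case (b) of Borho--MacPherson \cite{bm}.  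Once the constants match, the lift from $q$ to $\X$ in the split case is automatic, as the identities are Laurent-polynomial in $q$ holding for infinitely many $q$, recovering \Cref{thm:trace}.
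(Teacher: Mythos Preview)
Your proof is correct and follows essentially the same approach as the paper: combine \Cref{lem:tau-norm} and \Cref{lem:tau-g} via \eqref{eq:main}, note the $\iota$-invariance of $1$ and $\sf{1}_{w_{J\circ}}^2$, and then reduce the resulting constants using $|U| = |U_J||U_{w_{J\circ}}^-|$ together with $B = T \ltimes U = B_J \ltimes U_J$. The paper's own proof is just a more condensed version of exactly this computation.
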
 

\begin{proof}
Combine \Crefrange{lem:tau-norm}{lem:tau-g} with \eqref{eq:main}, noting that $1$ and $\sf{1}_{w_{J\circ}}^2$ are invariant under $\iota$.
Doing so gives
\begin{align}
\frac{1}{|B|}\, \tau[N_J^S(1)] 
	&= \frac{1}{|U_J||U_{w_{J\circ}}^-|}\,
	\tau_G(e_{J, -} \otimes \whitearg)
		= \frac{1}{|U|}\,
		\tau_G(e_{J, -} \otimes \whitearg),\\
\frac{1}{|B|}\, \tau[N_J^S(\sf{1}_{w_{J\circ}}^2)] 
	&= \frac{1}{|U_J|}\,
	\tau_G(e_{J, +} \otimes \whitearg).
\end{align}
Then recall that $B = T \ltimes U = B_J \ltimes U_J$.
\end{proof}

\subsection{The Multiplicity Formula}\label{subsec:exotic}

\emph{Throughout this subsection, we assume that $G$ is split.}
As in \S\ref{subsec:trace-intro}, we write:
\begin{itemize} 
\item 	$\sf{V}_G$ for the representation of $W$ on the $\bb{Q}$-span of the cocharacter lattice of $\bb{T}$.

\item 	$\Irr(W)$ for the set of irreducible characters of $W$.

\item 	$\{-, -\}$ for the truncation of Lusztig's exotic Fourier transform to a $\bb{Q}$-valued pairing on $\Irr(W)$.
		In the notation of \cite{lusztig}, our pairing is the pullback of Lusztig's pairing $\{-, -\}$ along his embedding (4.21.3).

\end{itemize} 
We emphasize that the pairing $\{-, -\}$ remains fairly mysterious.
Notably, its definition in \cite{lusztig} involves some case-by-case constructions.
The most uniform definitions of $\{-, -\}$ involve algebraic geometry.

By \cite{lusztig_81}, $\bb{Q}(\X^{1/2})$ is a splitting field for $H_W$.
Hence, by Tits deformation \cite[Ch.\ 7]{gp}, each character $\chi : W \to \bb{Q}$ defines a trace $\chi_\X : H_W \to \bb{Q}(\X^{1/2})$.
The set of traces $\chi_\X$ with $\chi \in \Irr(W)$ forms a basis for $\bb{Q}(\X^{1/2}) \otimes R(H_W)$ as a vector space.

The character formula in \cite{trinh} translates to an expansion of $\tau_G(z \otimes \whitearg)$ in this basis for any $z \in \bb{Q}W$:
\begin{align}\label{eq:tau-g-to-exotic}
\tau_G(z \otimes \whitearg)
	= \sum_{\chi, \psi \in \Irr(W)}
		\frac{\{\chi, \psi\}  \psi(z)}{\det(\X - z \mid \sf{V}_G)}\, \chi_\X.
\end{align}
Combining this with \Cref{thm:trace} gives \Cref{cor:exotic}.

\subsection{Recovering Lascoux--Wan--Wang}\label{subsec:lascoux}

In this subsection, we take $\bb{G} = \mathbf{GL}_n$, and $F$ to be the standard Frobenius that raises each matrix coordinate to its $q$th power.
Then $G = \GL_n(\bb{F})$ and $W = \bb{W} = S_n$.
For each integer partition $\lambda \vdash n$, let $\chi^\lambda \in \Irr(S_n)$ be the corresponding irreducible character.
The trace $\chi_\X^\lambda$ turns out to be $\bb{Q}(\X)$-valued, not just $\bb{Q}(\X^{1/2})$-valued, so the map $\FC_\X$ in \S\ref{subsec:trace-intro} is well-defined.

As in \emph{loc.\ cit.}, we take $S = \{s_1, \ldots, s_{n - 1}\}$, where $s_i \in S_n$ is the transposition swapping $i$ and $i + 1$.
We will use the bijection between integer compositions of $n$ and subsets of $S$ that matches $\nu = (\nu_1, \nu_2, \ldots) \vdash n$ with
\begin{align}
J = S \setminus \{s_{\nu_1}, s_{\nu_1 + \nu_2}, \ldots\}
\end{align}
For this $J$, we find that $W_J \subseteq W$ is the \dfemph{Young subgroup} $S_\nu \simeq S_{\nu_1} \times S_{\nu_2} \times \ldots$

For $G = \GL_n(\bb{F})$, the pairing $\{-, -\}$ in \S\ref{subsec:exotic} is given by $\{\chi, \chi\} = 1$ and $\{\chi, \psi\} = 0$ whenever $\chi \neq \psi$.
So to prove that \Cref{cor:exotic} recovers Wan--Wang's formulas \eqref{eq:lascoux}, it remains to prove:

\begin{prop}\label[prop]{prop:tau-to-e-h}
If the subset $J$ corresponds to the integer composition $\nu$, then
\begin{align} 
\frac{\chi^\lambda(e_{J, -})}{\det(\X - e_{J, -} \mid \sf{V}_G)}
	&= \left\langle s_\lambda[X], e_\nu\left[\frac{X}{\X - 1}\right]\right\rangle,\\[0.5ex]
\frac{\chi^\lambda(e_{J, +})}{\det(\X - e_{J, +} \mid \sf{V}_G)}
	&= \left\langle s_\lambda[X], h_\nu\left[\frac{X}{\X - 1}\right]\right\rangle
\end{align}
for any $\lambda \vdash n$, where $\langle -, -\rangle$ is the Hall pairing on $\Lambda_n$ in which the Schur functions $s_\lambda[X]$ are orthonormal.
\end{prop}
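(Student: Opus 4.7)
My plan is to verify this identity directly, by expanding both sides in the Schur basis and matching coefficients.

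On the LHS, Frobenius reciprocity immediately identifies the numerator: $\chi^\lambda(e_{J, +})$ equals the multiplicity of the trivial character of $S_\nu$ in $\chi^\lambda\rvert_{S_\nu}$, hence $\chi^\lambda(e_{J, +}) = \langle \chi^\lambda, \mathrm{Ind}_{S_\nu}^{S_n}(1) \rangle = \langle s_\lambda, h_\nu \rangle$. The analogous computation with sign yields $\chi^\lambda(e_{J, -}) = \langle s_\lambda, e_\nu \rangle$. For the denominator, I would observe that $\sf{V}_G$ is the $n$-dimensional cocharacter space $\bb{Q}^n$, which decomposes under the Young subgroup $S_\nu$ as a direct sum $\bigoplus_j \bb{Q}^{\nu_j}$ of permutation modules. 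Since $e_{J, \pm}$ are idempotents acting blockwise, their eigenvalues on $\sf{V}_G$ are $0$ and $1$, and the multiplicities of $1$ are determined by the ranks of the trivial-, resp.\ sign-, isotypic projectors on each summand, giving $\det(\X - e_{J, \pm} \mid \sf{V}_G)$ as an explicit product of factors of the form $(\X-1)$ and $\X$.

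On the RHS, I would expand $h_\nu[X/(\X-1)] = \prod_j h_{\nu_j}[X/(\X-1)]$ in Schur functions, using the Cauchy-type identity
\[
	\sum_n h_n[X/(1-q)]\,t^n = \prod_{i,\,k \geq 0} (1 - x_i q^k t)^{-1} = \sum_\lambda s_\lambda(X)\, s_\lambda(1, q, q^2, \ldots)\, t^{|\lambda|}
\]
combined with the hook-content evaluation $s_\lambda(1, q, q^2, \ldots) = q^{n(\lambda)}/\prod_c (1 - q^{h(c)})$. To pass between $X/(1-\X)$ and $X/(\X-1)$, I would use the plethystic rule $h_n[-Y] = (-1)^n e_n[Y]$ (and its counterpart for $e_n$), which swaps the roles of $h$ and $e$ while introducing no extra signs after clearing the $(-1)^n$ between numerator and denominator. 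This gives a closed-form Schur expansion for each factor $h_{\nu_j}[X/(\X-1)]$, and likewise for $e_\nu[X/(\X-1)]$.

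The main obstacle is the final matching step. After the reductions above, the LHS is a simple ratio of a Kostka number to a product of $\X$-binomials, whereas the RHS is the Schur coefficient of a \emph{product} of plethystic expressions, mixing nontrivially via Pieri or Littlewood--Richardson combinatorics. I expect the required identity to follow by induction on $\ell(\nu)$: the one-part case $\nu = (k)$ reduces immediately to the hook-content formula, and the inductive step handles the product $h_{\nu_j}[X/(\X-1)] \cdot h_{\nu'}[X/(\X-1)]$ using Pieri's rule, tracking how the Kostka numbers accumulate. Alternatively, one can invoke classical identities of Hall--Littlewood type that express $h_\nu[X/(1-q)]$ as a generating function for semistandard Young tableaux of content $\nu$, weighted by a charge-like statistic, which directly produces $K_{\lambda \nu}$ at $q = 0$ and extrapolates to the required rational function in $\X$.
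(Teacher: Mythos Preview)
Your approach has a genuine gap at the very first step, in how you parse the left-hand side. You compute $\chi^\lambda(e_{J,\pm})$ and $\det(\X - e_{J,\pm}\mid\sf{V}_G)$ as separate quantities, treating the latter as the characteristic polynomial of the idempotent $e_{J,\pm}$ on $\bb{Q}^n$. But an idempotent has only eigenvalues $0$ and $1$, so this would force the denominator to be $\X^a(\X-1)^b$, whereas the right-hand side generically involves factors $\X^k-1$ for various $k$. Concretely, take $n=2$, $\nu=(2)$, $\lambda=(2)$: your reading gives $1/(\X(\X-1))$, while $\langle s_{(2)}, h_2[X/(\X-1)]\rangle = \X/((\X-1)^2(\X+1))$. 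No amount of Pieri or hook-content bookkeeping on the right will repair this mismatch; your inductive strategy is attempting to prove a false identity.

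The intended reading, inherited from formula~\eqref{eq:tau-g-to-exotic}, is that the whole quotient denotes the value at $e_{J,\pm}$ of the class function $w\mapsto \chi^\lambda(w)/\det(\X-w\mid\sf{V}_G)$, extended \emph{linearly} to $\bb{Q}W$. With that reading the proof is one line: by Frobenius reciprocity this linear extension at $e_{J,+}$ equals the pairing of that class function with $\mathrm{Ind}_{S_\nu}^{S_n}(1)$; applying $\FC$ together with the standard fact that $\FC$ sends $w\mapsto\chi^\lambda(w)/\det(\X-w)$ to $s_\lambda[X/(\X-1)]$ gives $\langle s_\lambda[X/(\X-1)], h_\nu[X]\rangle$; and self-adjointness of the plethystic substitution under the Hall pairing moves $[X/(\X-1)]$ onto $h_\nu$. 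The sign case is identical with $e_\nu$ in place of $h_\nu$. No Cauchy identity, hook-content formula, or induction on $\ell(\nu)$ is needed.
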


As preparation, let $R(S_n)$ be the vector space of $\bb{Q}(\X)$-valued traces on $\bb{Q}S_n$.
Let $\FC : R(S_n) \xrightarrow{\sim} \Lambda_n$ be the \dfemph{(undeformed) Frobenius characteristic} isomorphism that sends $\chi^\lambda$ to $s_\lambda[X]$, and the multiplicity pairing on $R(S_n)$ to the Hall pairing.

\begin{proof}
Recall that $\FC$ sends $\chi^\lambda/{\det(\X - \whitearg \mid \sf{V}_G)}$ to the plethystically transformed Schur $s_\lambda[\frac{X}{\X - 1}]$.
At the same time, since $W_J = S_\lambda$, it sends the induced character of $W = S_n$ arising from the trivial, \emph{resp.}\ sign, character of $W_J$ to the symmetric function $h_\nu[X]$, \emph{resp.}\ $e_\nu[X]$.
So by Frobenius reciprocity,
\begin{align}
\frac{\chi^\lambda(e_{J, +})}{\det(\X - e_{J, +} \mid \sf{V}_G)}
	&= \left\langle 
		s_\lambda\left[\frac{X}{\X - 1}\right], h_\nu[X]
		\right\rangle
	= \left\langle 
	s_\lambda[X], h_\nu\left[\frac{X}{\X - 1}\right]
	\right\rangle,
\end{align}
and similarly with $e_{J, -}$, $e_\nu$ in place of $e_{J, +}$, $h_\nu$.
\end{proof}

\section{Braid Varieties and Deodhar Decompositions}\label{sec:cell}

\subsection{}

\emph{For the rest of the paper, we assume that $G$ is split}.
In this section, we prove \Cref{thm:cell}, relating partial braid Steinberg varieties to the cell decompositions of open braid Richardson varieties.
In fact, we prove a refinement that respects individual cells.

We will freely use the terminology from Coxeter combinatorics that we reviewed in \S\ref{subsec:cell-intro}.
Throughout, we fix a word $\vec{s} = (s^{(1)}, \ldots, s^{(\ell)})$ in $S$.

\subsection{Richardson Varieties}

Recall that for any $v \in W$, we defined the \dfemph{$v$-twisted open Richardson variety} of $\vec{s}$ on points by
\begin{align}
\bb{R}^{(v)}(\vec{s})
	= \{\vec{g}\bb{B} \in \bb{O}(\vec{s}) \mid \text{$g_0\bb{B} =  vw_\circ\bb{B}$ and $\bb{B} \xrightarrow{vw_\circ} g_\ell\bb{B}$}\}.
\end{align}
Below, we give further detail about the cell decomposition mentioned in \S\ref{subsec:cell-intro}.
For any $v$-distinguished subword $\vec{\omega}$ of $\vec{s}$, let $\bb{R}^{(v)}(\vec{s})_{\vec{\omega}} \subseteq \bb{R}^{(v)}(\vec{s})$ be the subvariety
\begin{align}
	\bb{R}^{(v)}(\vec{s})_{\vec{\omega}}
	&= \{\vec{g}\bb{B} \in \mathbf{R}^{(v)}(\vec{s}) \mid \bb{B} \xrightarrow{v\omega_{(i)}w_\circ} g_i\bb{B}\}.
\end{align}
As before, let $\cal{D}^{(v)}(\vec{s})$ be the set of $v$-distinguished subwords $\vec{\omega}$ of $\vec{s}$ such that $\omega_{(\ell)} = e$.
For any $\vec{\omega} \in \cal{D}^{(v)}(\vec{s})$, let
\begin{align}
\mathbf{d}_{\vec{\omega}}
	&= \{i \mid v\omega_{(i)} < v\omega_{(i - 1)}\},\\
\mathbf{e}_{\vec{\omega}} 
	&= \{i \mid \omega^{(i)} = e\},
\end{align} 
The main results of \cite{deodhar} show that for any word $\vec{s}$ in $S$:
\begin{enumerate} 
	\item 	$\bb{R}^{(v)}(\vec{s})_{\vec{\omega}}$ is nonempty if and only if $\omega \in \mathcal{D}^{(v)}(\vec{s})$.
	In this case,
	\begin{align}\label{eq:cell}
		\bb{R}^{(v)}(\vec{s})_{\vec{\omega}}
		&\simeq \left\{\vec{t} \in \mathbf{A}^\ell\, \middle|
		\begin{array}{ll}
			t_i \neq 0
			&\text{for $i \in \mathbf{e}_{\vec{\omega}}$},\\
			t_i= 0
			&\text{for $i \notin \mathbf{d}_{\vec{\omega}} \cup \mathbf{e}_{\vec{\omega}}$}
		\end{array}\!\right\}
	\end{align}
	from which $R^{(v)}(\vec{s})_{\vec{\omega}} \vcentcolon= \bb{R}^{(v)}(\vec{s})_{\vec{\omega}}^F$ satisfies
	\begin{align}\label{eq:cell-r-s-omega}
	|R^{(v)}(\vec{s})_{\vec{\omega}}| 
		= q^{|\mathbf{d}_{\vec{\omega}}|}
		(q - 1)^{|\mathbf{e}_{\vec{\omega}}|}.
	\end{align} 

	\item 	The subvarieties $\bb{R}^{(v)}(\vec{s})_{\vec{\omega}}$ are pairwise disjoint and partition $\mathbf{R}^{(v)}(\vec{s})$ as we run over $\vec{\omega} \in \mathcal{D}^{(v)}(\vec{s})$.
	
\end{enumerate}
In light of \eqref{eq:cell}, the varieties $\bb{R}^{(v)}(\vec{s})_{\vec{\omega}}$ are called \dfemph{Deodhar cells}.

\subsection{Change of Structure Group}\label{subsec:change-of-group}

To compare them to the geometry in previous sections, we need a more symmetrical version of the open Richardson varieties.
Let $\bb{X}^{(v)}$, $\bb{X}_\bb{B}^{(v)}$, $\bb{R}^{(v)}$ be the varieties defined on points by
\begin{align}
\bb{X}^{(v)}
	&= \{(h\bb{B}, x\bb{B}, g\bb{B}) \in (\bb{G}/\bb{B})^3 \mid h\bb{B} \xleftarrow{vw_\circ} x\bb{B} \xrightarrow{vw_\circ} g\bb{B}\}\\
	&\simeq \bb{O}((vw_\circ)^{-1}, vw_\circ),\\
\bb{X}_\bb{B}^{(v)}
	&= \{(h\bb{B}, g\bb{B}) \in (\bb{G}/\bb{B})^2 \mid h\bb{B} \xleftarrow{vw_\circ} \bb{B} \xrightarrow{vw_\circ} g\bb{B}\},\\
\bb{R}^{(v)}
	&= \{vw_\circ\bb{B}\} \times \bb{B}vw_\circ \bb{B}/\bb{B}.
\end{align}
By construction, $\bb{R}^{(v)}(\vec{s})$
is the preimage of $\bb{R}^{(v)}$ along $\bb{O}(\vec{s}) \xrightarrow{\pr_{0, \ell}} (\bb{G}/\bb{B})^2$.
We will relate the varieties above to one another, thereby relating $\bb{R}^{(v)}(\vec{s})$ and its Deodhar cells to analogous varieties built from $\bb{X}^{(v)}$, $\bb{X}_\bb{B}^{(v)}$.

Observe that $\bb{X}^{(v)}$ is stable under the $\bb{G}$-action on $(\bb{G}/\bb{B})^3$.
The action of $\bb{G}$ on $\bb{X}^{(v)}$ restricts to an action of $\bb{B}$ on $\bb{X}_\bb{B}^{(v)}$, which in turn restricts to an action of
\begin{align} 
\bb{B}_v^- \vcentcolon= \bb{B} \cap v\bb{B}_-v^{-1} = \bb{B} \cap (vw_\circ)\bb{B}(vw_\circ)^{-1}
\end{align} 
on $\bb{R}^{(v)}$.
By \Cref{lem:bruhat}(2), $\bb{B} = \bb{B}_v^-\bb{U}_v = \bb{U}_v \bb{B}_v^-$ and $\bb{B}_v^- \cap \bb{U}_v = \{1\}$.

\begin{lem}\label[lem]{lem:action}
For any $v \in W$, let $\bb{B}$ act on $\bb{G} \times \bb{X}_\bb{B}^{(v)}$ from the left by 
\begin{align} 
b \cdot (x, h\bb{B}, g\bb{B}) = (xb^{-1}, bh\bb{B}, bg\bb{B}).
\end{align}
Then:
\begin{enumerate}
	\item 	The map $(\bb{G} \times \bb{X}_\bb{B}^{(v)})/\bb{B} \to \bb{X}^{(v)}$ that sends $[x, h\bb{B}, g\bb{B}] \mapsto (xh\bb{B}, x\bb{B}, xg\bb{B})$ is an isomorphism.
	
	\item 	The quotient $\bb{X}_\bb{B}^{(v)}/\bb{U}_v$ forms an algebraic variety.
	The composition of maps
	\begin{align} 
		\bb{R}^{(v)} \to \bb{X}_\bb{B}^{(v)} \to \bb{X}_\bb{B}^{(v)}/\bb{U}_v
	\end{align}
	is an isomorphism.
	
\end{enumerate}
\end{lem}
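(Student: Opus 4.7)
Both parts will follow from explicit product decompositions. For (1), the projection $\bb{X}^{(v)} \to \bb{G}/\bb{B}$ onto the middle coordinate is $\bb{G}$-equivariant with fiber $\bb{X}_\bb{B}^{(v)}$ over the basepoint $\bb{B}$, so the claim is the standard identification of a $\bb{G}$-homogeneous bundle with its associated induced space. I would verify that the proposed map $\Phi([x, h\bb{B}, g\bb{B}]) = (xh\bb{B}, x\bb{B}, xg\bb{B})$ is well-defined on the $\bb{B}$-quotient and lands in $\bb{X}^{(v)}$ (both routine from the definitions of the $\bb{B}$-action and of $\bb{X}^{(v)}$), check it is a bijection on points---surjectivity via transitivity of $\bb{G}$ on $\bb{G}/\bb{B}$, and injectivity because two preimages with the same middle coordinate must differ by the requisite $\bb{B}$-action---and upgrade this to an isomorphism of varieties by invoking that $\bb{G} \to \bb{G}/\bb{B}$ is a Zariski-locally trivial principal $\bb{B}$-bundle, so that $(\bb{G} \times \bb{X}_\bb{B}^{(v)})/\bb{B}$ exists and is represented by the associated bundle $\bb{G} \times^\bb{B} \bb{X}_\bb{B}^{(v)}$.

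For (2), the key algebraic input is the identity $\bb{U}_{vw_\circ}^- = \bb{U}_v$, obtained by computing $\bb{U} \cap vw_\circ \bb{U}_-(vw_\circ)^{-1} = \bb{U} \cap v\bb{U}v^{-1}$ via $w_\circ \bb{U}_- w_\circ^{-1} = \bb{U}$. Combined with \Cref{lem:bruhat}(3) applied to $vw_\circ$, this yields an isomorphism $\bb{U}_v \xrightarrow{\sim} \bb{B}vw_\circ\bb{B}/\bb{B}$ sending $u \mapsto u \cdot vw_\circ\bb{B}$. I would then exhibit a product decomposition
\[
\phi : \bb{U}_v \times \bb{R}^{(v)} \xrightarrow{\sim} \bb{X}_\bb{B}^{(v)},
\qquad
(u, (vw_\circ\bb{B}, g\bb{B})) \longmapsto (u \cdot vw_\circ\bb{B},\, u \cdot g\bb{B}),
\]
noting that $u \in \bb{B}$ preserves the double-coset condition, since $\bb{B} u g \bb{B} = \bb{B} g \bb{B} = \bb{B} vw_\circ \bb{B}$, so that $\phi$ really lands in $\bb{X}_\bb{B}^{(v)}$. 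Its inverse sends $(h\bb{B}, g\bb{B}) \in \bb{X}_\bb{B}^{(v)}$ to the pair $(u(h), (vw_\circ\bb{B}, u(h)^{-1} g\bb{B}))$, where $u(h) \in \bb{U}_v$ is the unique element with $u(h) \cdot vw_\circ\bb{B} = h\bb{B}$ furnished by the previous isomorphism. Under $\phi$, the diagonal left $\bb{U}_v$-action on $\bb{X}_\bb{B}^{(v)}$ corresponds to multiplication on the first factor of $\bb{U}_v \times \bb{R}^{(v)}$, so $\bb{X}_\bb{B}^{(v)}/\bb{U}_v$ exists as an algebraic variety and is identified with $\bb{R}^{(v)}$; the composition $\bb{R}^{(v)} \hookrightarrow \bb{X}_\bb{B}^{(v)} \twoheadrightarrow \bb{X}_\bb{B}^{(v)}/\bb{U}_v$ is then inverse to this identification, as required.

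The main obstacle is not deep: it is mostly bookkeeping to see that the decomposition $\bb{B} = \bb{B}_v^- \bb{U}_v$ from \S\ref{subsec:change-of-group} splits $\bb{X}_\bb{B}^{(v)}$ in precisely the way that pairs $\bb{U}_v$ with the first-coordinate degree of freedom and $\bb{R}^{(v)}$ with the remainder, via the identification $\bb{U}_{vw_\circ}^- = \bb{U}_v$.
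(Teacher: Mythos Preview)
Your proposal is correct and follows essentially the same approach as the paper. The paper's proof is much terser but uses the identical ingredients: for (1), transitivity of the $\bb{G}$-action on the middle coordinate $x\bb{B}$ with stabilizer $\bb{B}$; for (2), the identity $\bb{U}_{vw_\circ}^- = \bb{U}_v$ together with \Cref{lem:bruhat}(3) to see that the $\bb{U}_v$-action is simply transitive on the coordinate $h\bb{B}$, which is exactly your product decomposition $\bb{U}_v \times \bb{R}^{(v)} \xrightarrow{\sim} \bb{X}_\bb{B}^{(v)}$ stated more compactly.
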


\begin{proof}
	(1):
	$\bb{X}_\bb{B}^{(v)}$ is the closed subvariety of $\bb{X}^{(v)}$ cut out by the condition $x\bb{B} = \bb{B}$.
	The $\bb{G}$-action on $\bb{X}^{(v)}$ is transitive on the coordinate $x\bb{B}$, and the stabilizer of the point $\bb{B}$ is itself.
	
	(2):
	$\bb{R}^{(v)}$ is the closed subvariety of $\bb{X}_\bb{B}^{(v)}$ cut out by the condition $h\bb{B} = vw_\circ\bb{B}$.
	By \Cref{lem:bruhat}(3), the $\bb{B}$-action on $\bb{X}_\bb{B}^{(v)}$ restricts to an action of $\bb{U}_{vw_\circ}^- = \bb{U}_v$ that is simply transitive on the coordinate $h\bb{B}$.
\end{proof}

\begin{cor}\label[cor]{cor:action}
The maps $(G \times X_B^{(v)})/B \to X^{(v)}$ and $R^{(v)} \to X_B^{(v)}/U_v$ on $F$-fixed points induced by the isomorphisms above are bijections.
\end{cor}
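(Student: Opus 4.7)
The plan is to derive the corollary from \Cref{lem:action} by invoking the Lang--Steinberg theorem, which bridges the gap between passing to $F$-fixed points and forming quotients by group actions. Restricting the two isomorphisms in the lemma to $F$-fixed points already yields bijections
\begin{align*}
((\bb{G} \times \bb{X}_\bb{B}^{(v)})/\bb{B})^F &\xrightarrow{\sim} X^{(v)},\\
R^{(v)} &\xrightarrow{\sim} (\bb{X}_\bb{B}^{(v)}/\bb{U}_v)^F,
\end{align*}
so the task reduces to identifying the source of the first with $(G \times X_B^{(v)})/B$ and the target of the second with $X_B^{(v)}/U_v$.

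The key input is the standard consequence of Lang's theorem: if $\bb{H}$ is a connected, $F$-stable algebraic group acting freely on an $F$-stable variety $\bb{Y}$, then the natural injection $Y^F/H^F \hookrightarrow (\bb{Y}/\bb{H})^F$ is a bijection. Indeed, the $\bb{H}$-orbit over any $F$-fixed class is an $F$-stable $\bb{H}$-torsor, which is trivial by Lang's theorem since $\bb{H}$ is connected, and hence has an $F$-fixed point; the $H^F$-orbits on such fixed points then recover the fibers of $Y^F \to (\bb{Y}/\bb{H})^F$.

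I would apply this twice. For~(1), the group $\bb{B}$ is connected and $F$-stable, and acts freely on $\bb{G} \times \bb{X}_\bb{B}^{(v)}$ through right-translation on the first factor, so $\bb{G} \times \bb{X}_\bb{B}^{(v)} \to (\bb{G} \times \bb{X}_\bb{B}^{(v)})/\bb{B}$ is a $\bb{B}$-torsor and Lang--Steinberg supplies the desired identification. For~(2), the group $\bb{U}_v = \bb{U} \cap v\bb{U}v^{-1}$ is connected (being unipotent) and $F$-stable (since $v \in W = \bb{W}^F$ and $\bb{U}$ is), while the proof of \Cref{lem:action}(2) shows it acts simply transitively on the $h\bb{B}$-coordinate of $\bb{X}_\bb{B}^{(v)}$, hence freely on $\bb{X}_\bb{B}^{(v)}$. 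Composing the resulting bijections with the restrictions of the lemma's isomorphisms to $F$-fixed points then completes the proof. There is no serious obstacle here; this is a mechanical application of Lang--Steinberg, and the only subtle point is verifying freeness of each action, which is transparent from the structure of the situation.
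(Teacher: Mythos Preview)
Your argument is correct and matches the paper's own proof, which simply says the result is immediate from Lang's theorem since $\bb{B}$ and $\bb{U}_v$ are connected and act freely on $\bb{G} \times \bb{X}_\bb{B}^{(v)}$ and $\bb{X}_\bb{B}^{(v)}$, respectively. You have spelled out the details of that one-line proof.
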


\begin{proof}
Immediate from Lang's theorem, since $\bb{B}$, \emph{resp.}\ $\bb{U}_v$, is connected and acts freely on $\bb{G} \times \bb{X}_\bb{B}^{(v)}$, \emph{resp.}\ $\bb{X}_\bb{B}^{(v)}$.
\end{proof}

Let $\bb{X}^{(v)}(\vec{s}) = \bb{O}(\vec{s}) \times_{(\bb{G}/\bb{B})^2} \bb{X}^{(v)}$ and $\bb{X}_\bb{B}^{(v)}(\vec{s}) = \bb{O}(\vec{s}) \times_{(\bb{G}/\bb{B})^2} \bb{X}_\bb{B}^{(v)}$, where the fiber products are formed with respect to the maps $\pr_{0, \ell}$ on the left factors and the coordinate pairs $(h\bb{B}, g\bb{B})$ on the right factors.
On points,
\begin{align}
\bb{X}^{(v)}(\vec{s})
	&= \{(\vec{g}\bb{B}, x\bb{B}) \in \bb{O}(\vec{s}) \times \bb{G}/\bb{B} \mid g_0\bb{B} \xleftarrow{vw_\circ} x\bb{B} \xrightarrow{vw_\circ} g_\ell\bb{B}\},\\
\bb{X}_\bb{B}^{(v)}(\vec{s})
	&= \{\vec{g}\bb{B} \in \bb{O}(\vec{s}) \mid g_0\bb{B} \xleftarrow{vw_\circ} \bb{B} \xrightarrow{vw_\circ} g_\ell\bb{B}\}.
\end{align}
These varieties can respectively be partitioned into subvarieties
\begin{align}
	\bb{X}^{(v)}(\vec{s})_{\vec{\omega}}
	&= \{(\vec{g}\bb{B}, x\bb{B}) \in \bb{O}(\vec{s}) \times \bb{G}/\bb{B} \mid x\bb{B} \xrightarrow{v\omega_{(i)}w_\circ} g_i\bb{B}\},\\
	\bb{X}_\bb{B}^{(v)}(\vec{s})_{\vec{\omega}}
	&= \{\vec{g}\bb{B} \in \bb{X}^{(v)}(\vec{s}) \mid \bb{B} \xrightarrow{v\omega_{(i)}w_\circ} g_i\bb{B}\}
\end{align}
as $\vec{\omega}$ runs over $\mathcal{D}^{(v)}(\vec{s})$.
Note that $\bb{X}^{(v)}(\vec{s})_{\vec{\omega}}$ is stable under the $\bb{G}$-action on $\bb{X}^{(v)}(\vec{s})$, as are $\bb{X}_\bb{B}^{(v)}(\vec{s})_{\vec{\omega}}$, \emph{resp.}\ $\bb{R}^{(v)}(\vec{s})_{\vec{\omega}}$, under $\bb{B}$, \emph{resp.}\ $\bb{B}_{vw_\circ}$.
Pulling back \Cref{lem:action} along $\pr_{0, \ell} : \bb{O}(\vec{s})_{\vec{\omega}} \to (\bb{G}/\bb{B})^2$, we see:

\begin{cor}\label[cor]{cor:action-s}
For any $\vec{s}$ and $\vec{\omega} \in \mathcal{D}^{(v)}(\vec{s})$, the analogues of \Cref{lem:action} and \Cref{cor:action} hold with $\diamondsuit(\vec{s})_{\vec{\omega}}$ replacing $\diamondsuit$ for each $\diamondsuit \in \{ \bb{X}^{(v)}, \bb{X}_\bb{B}^{(v)}, \bb{R}^{(v)}\}$.
Thus,
\begin{align}
|X^{(v)}(\vec{s})_{\vec{\omega}}| 
	&= \frac{|G||X_B^{(v)}(\vec{s})_{\vec{\omega}}|}{|B|},\\
|X_B^{(v)}(\vec{s})_{\vec{\omega}}| 
	&= |U_v||R^{(v)}(\vec{s})_{\vec{\omega}}|.
\end{align}
\end{cor}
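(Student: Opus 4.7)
The plan is to extend the arguments of \Cref{lem:action} directly, using the fact that the group actions in that lemma extend diagonally to the intermediate flag coordinates of $\bb{O}(\vec{s})$, and that the cell conditions defining $\diamondsuit(\vec{s})_{\vec{\omega}}$ are compatible with these actions. In effect, the three decorated varieties $\bb{X}^{(v)}(\vec{s})_{\vec{\omega}}$, $\bb{X}_\bb{B}^{(v)}(\vec{s})_{\vec{\omega}}$, $\bb{R}^{(v)}(\vec{s})_{\vec{\omega}}$ are to be exhibited as pullbacks of $\bb{X}^{(v)}$, $\bb{X}_\bb{B}^{(v)}$, $\bb{R}^{(v)}$ along $\pr_{0,\ell}: \bb{O}(\vec{s})_{\vec{\omega}} \to (\bb{G}/\bb{B})^2$, so that the isomorphisms of \Cref{lem:action} transport by base change.

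Concretely, I would extend the $\bb{B}$-action on $\bb{G} \times \bb{X}_\bb{B}^{(v)}$ to $\bb{G} \times \bb{X}_\bb{B}^{(v)}(\vec{s})$ by $b \cdot (x, \vec{g}\bb{B}) = (xb^{-1}, (bg_i\bb{B})_i)$, and verify that the formula $[x, \vec{g}\bb{B}] \mapsto ((xg_i\bb{B})_i, x\bb{B})$ gives a well-defined morphism $(\bb{G} \times \bb{X}_\bb{B}^{(v)}(\vec{s})_{\vec{\omega}})/\bb{B} \to \bb{X}^{(v)}(\vec{s})_{\vec{\omega}}$, with explicit inverse $(\vec{g}'\bb{B}, x\bb{B}) \mapsto [x, (x^{-1}g'_i\bb{B})_i]$. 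The key point is that the cell condition $x\bb{B} \xrightarrow{v\omega_{(i)}w_\circ} g'_i\bb{B}$ is equivalent, after $x^{-1}$-translation, to $\bb{B} \xrightarrow{v\omega_{(i)}w_\circ} x^{-1}g'_i\bb{B}$, so the cells on either side correspond correctly. Similarly, restricting the $\bb{B}$-action on $\bb{X}_\bb{B}^{(v)}(\vec{s})_{\vec{\omega}}$ to $\bb{U}_v$, the composition
\begin{align}
\bb{R}^{(v)}(\vec{s})_{\vec{\omega}} \hookrightarrow \bb{X}_\bb{B}^{(v)}(\vec{s})_{\vec{\omega}} \twoheadrightarrow \bb{X}_\bb{B}^{(v)}(\vec{s})_{\vec{\omega}}/\bb{U}_v
\end{align}
is an isomorphism by pulling back the analogous isomorphism of \Cref{lem:action}(2) along $\pr_{0,\ell}$, since \Cref{lem:bruhat}(3) already guarantees that $\bb{U}_v$ acts simply transitively on the $g_0\bb{B}$-coordinate.

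For the analogues of \Cref{cor:action}, the connectedness of $\bb{B}$ and $\bb{U}_v$ combined with Lang's theorem upgrades the isomorphisms to bijections on $F$-fixed points. The stated point-count identities then follow at once from the bijective relations $|(G \times X_B^{(v)}(\vec{s})_{\vec{\omega}})/B| = |X^{(v)}(\vec{s})_{\vec{\omega}}|$ and $|R^{(v)}(\vec{s})_{\vec{\omega}}| = |X_B^{(v)}(\vec{s})_{\vec{\omega}}|/|U_v|$. The main piece of bookkeeping is checking that each cell condition is preserved under the appropriate diagonal action; this reduces to the observation that these conditions record only the relative Bruhat position of $x\bb{B}$ (or $\bb{B}$) and $g_i\bb{B}$, which is invariant under simultaneous left translation.
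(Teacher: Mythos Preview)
Your proposal is correct and follows essentially the same approach as the paper: the paper's proof is the one-line remark that the corollary follows by pulling back \Cref{lem:action} along $\pr_{0,\ell}:\bb{O}(\vec{s})_{\vec{\omega}}\to(\bb{G}/\bb{B})^2$, and you have simply spelled out this base-change argument (and the Lang's theorem step for the analogue of \Cref{cor:action}) in greater detail.
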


\subsection{Steinberg Varieties}

Fix $J \subseteq S$.
As in \S\ref{subsec:cell-intro}, we define the \dfemph{partial Steinberg varieties} of $\vec{s}$ of type $J$ on points by
\begin{align}
\bb{Z}_J^\pm(\vec{s})
	= \{(\vec{g}\bb{B}, u, y\bb{P}_J) \in \bb{O}(\vec{s}) \times \bb{Spr}_J^\pm \mid 
	ug_0\bb{B} = g_\ell\bb{B}
	\}.
\end{align}
We let $\bb{G}$ act on $\bb{Z}_J^\pm(\vec{s})$ via its actions on $\bb{Spr}_J^\pm$ and $\bb{O}(\vec{s})$.
The coordinate triple $(g_\ell\bb{B}, u, y\bb{P}_J)$ defines an equivariant map $\bb{Z}_J^\pm(\vec{s}) \to \bb{E}_J^\pm$.
Pulling back the partition of $\bb{E}_J^\pm$ by subvarieties $\bb{E}_{J, v}^\pm$ in \Cref{sec:springer}, we get a partition of $\bb{Z}_J^\pm(\vec{s})$ into subvarieties
\begin{align}
\bb{Z}_{J, v}^\pm(\vec{s})
	= \{(\vec{g}\bb{B}, u, y\bb{P}_J) \in \bb{Z}_J^\pm(\vec{s}) \mid 
	\bb{P}_Jy^{-1}g_\ell\bb{B} = \bb{P}_Jv\bb{B}
	\}
\end{align}
as $W_Jv$ runs over $W_J\backslash W$.
Note that the points of $\bb{Z}_{J, v}^\pm(\vec{s})$ also satisfy the condition $\bb{P}_J y^{-1}g_0\bb{B} = \bb{P}_Jv\bb{B}$.

\begin{prop}\label[prop]{prop:z-x-s}
If $v \in W^{J, -}$, then:
\begin{enumerate} 
\item 	$|Z_{J, v}^-(\vec{s})| = |U_{w_{J\circ}v}| |X^{(vw_\circ)}(\vec{s})|$.\\[-3ex]

\item 	$|Z_{J, v}^+(\vec{s})| = |U_{w_{J\circ}v}| |X^{(w_{J\circ}vw_\circ)}(\vec{s})|$.

\end{enumerate}
\end{prop}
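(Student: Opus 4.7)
The plan is to fiber $\bb{Z}_{J,v}^\pm(\vec{s})$ over its endpoint pair $(g_0\bb{B}, g_\ell\bb{B})$, identify the fiber with that of $\mult : E_{J,v}^\pm \to (G/B)^2$ from \Cref{sec:springer}, and then appeal to \Cref{thm:main-v} together with a direct convolution calculation.

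First I would observe that for any $\vec{g}\bb{B} \in \bb{O}(\vec{s})$, the set of pairs $(u, y\bb{P}_J)$ completing $(\vec{g}\bb{B}, u, y\bb{P}_J) \in \bb{Z}_{J,v}^\pm(\vec{s})$ coincides with the set of $(u, y\bb{P}_J)$ such that $(g_0\bb{B}, u, y\bb{P}_J) \in \bb{E}_{J,v}^\pm$ and $\mult(g_0\bb{B}, u, y\bb{P}_J) = (g_0\bb{B}, g_\ell\bb{B})$. The key check here is that the defining condition $\bb{P}_J y^{-1} g_\ell \bb{B} = \bb{P}_J v \bb{B}$ of $\bb{Z}_{J,v}^\pm(\vec{s})$ is equivalent, given $u g_0\bb{B} = g_\ell\bb{B}$ with $u \in y\bb{P}_J y^{-1}$, to the corresponding condition at $g_0\bb{B}$ appearing in the definition of $\bb{E}_{J,v}^\pm$. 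Thus
\[
|Z_{J,v}^\pm(\vec{s})| = \sum_{\vec{g}\bb{B} \in O(\vec{s})} \mult_!\sf{1}_{E_{J,v}^\pm}(g_0\bb{B}, g_\ell\bb{B}).
\]

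Next I would apply \Cref{thm:main-v}, which gives
\[
\mult_!\sf{1}_{E_{J,v}^-} = |U_{w_{J\circ}v}|\, \sf{1}_{v^{-1}} \ast \sf{1}_v \quad \text{and} \quad \mult_!\sf{1}_{E_{J,v}^+} = |U_{w_{J\circ}v}|\, \sf{1}_{v^{-1}} \ast \sf{1}_{w_{J\circ}}^2 \ast \sf{1}_v.
\]
Since $v \in W^{J,-}$ means $\ell(w_{J\circ}v) = \ell(w_{J\circ}) + \ell(v)$, the Iwahori relations recalled in \S\ref{subsec:functions} give $\sf{1}_{w_{J\circ}v} = \sf{1}_{w_{J\circ}} \ast \sf{1}_v$ and $\sf{1}_{(w_{J\circ}v)^{-1}} = \sf{1}_{v^{-1}} \ast \sf{1}_{w_{J\circ}}$, so the $+$ expression simplifies to $|U_{w_{J\circ}v}|\, \sf{1}_{(w_{J\circ}v)^{-1}} \ast \sf{1}_{w_{J\circ}v}$.

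Finally I would unpack the convolution: for any $w \in W$,
\[
(\sf{1}_{w^{-1}} \ast \sf{1}_w)(h\bb{B}, g\bb{B}) = |\{x\bb{B} : x\bb{B} \xrightarrow{w} h\bb{B},\ x\bb{B} \xrightarrow{w} g\bb{B}\}|.
\]
Summing over $\vec{g}\bb{B} \in O(\vec{s})$ and comparing with the definition of $\bb{X}^{(u)}(\vec{s})$, in which the condition on $x\bb{B}$ is exactly $x\bb{B} \xrightarrow{uw_\circ} g_0\bb{B}$ and $x\bb{B} \xrightarrow{uw_\circ} g_\ell\bb{B}$, yields $|X^{(vw_\circ)}(\vec{s})|$ for the $-$ case (with $w = v$) and $|X^{(w_{J\circ}vw_\circ)}(\vec{s})|$ for the $+$ case (with $w = w_{J\circ}v$). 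Since all of the geometric depth has already been invested in \Cref{thm:main-v} — particularly the $+$ case, which relies on Kawanaka's theorem — the only nontrivial point in the present argument is the parabolic compatibility check in the fiber identification, and the rest is a short Hecke algebra manipulation.
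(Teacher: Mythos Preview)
Your proof is correct and follows essentially the same route as the paper: establish the identity $|Z_{J,v}^\pm(\vec{s})| = \sum_{\vec{g}B \in O(\vec{s})} \mult_!\sf{1}_{E_{J,v}^\pm}(g_0B, g_\ell B)$, observe the companion identity $|X^{(vw_\circ)}(\vec{s})| = \sum_{\vec{g}B \in O(\vec{s})} (\sf{1}_{v^{-1}} \ast \sf{1}_v)(g_0B, g_\ell B)$, and apply \Cref{thm:main-v}. You are more explicit than the paper in two places the paper leaves tacit: the parabolic compatibility check (already noted just before the proposition), and the Hecke simplification $\sf{1}_{v^{-1}} \ast \sf{1}_{w_{J\circ}}^2 \ast \sf{1}_v = \sf{1}_{(w_{J\circ}v)^{-1}} \ast \sf{1}_{w_{J\circ}v}$ needed to match the $+$ case with the convolution formula for $X^{(w_{J\circ}vw_\circ)}$.
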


\begin{proof}
For any $v \in W$, we have
\begin{align} 
	\label{eq:z-sum}
	|Z_{J, v}^\pm(\vec{s})|
	&= \sum_{\vec{g}B \in O(\vec{s})} 
	\mult_!\sf{1}_{E_{J, v}^\pm}(g_0B, g_\ell B),\\
	\label{eq:x-sum}
|X^{(vw_\circ)}(\vec{s})|
	&= \sum_{\vec{g}B \in O(\vec{s})} 
	(\sf{1}_{v^{-1}} \ast \sf{1}_{v\vphantom{^{-1}}})(g_0B, g_\ell B).
\end{align} 
(The second identity used the involutivity of $w_\circ$.)
Now apply \Cref{thm:main-v}.
\end{proof}

Since multiplication by $w_\circ$ or $w_{J\circ}$ swaps $W^{J, -}$ with $W^{J, +}$, the following result implies \Cref{thm:cell}.

\begin{cor}\label[cor]{cor:z-r-s}
If $v \in W^{J, -}$, then
\begin{align}
\frac{|Z_{J, v}^-(\vec{s})|}{|G|}
	&= \frac{1}{q^{\ell_J} (q - 1)^{\ur{rk}(G)}}
		\sum_{\vec{\omega} \in \mathcal{D}^{(v)}(\vec{s})}
		q^{|\sf{d}_{\vec{\omega}}|} 
		(q - 1)^{|\sf{e}_{\vec{\omega}}|},\\
\frac{|Z_{J, v}^+(\vec{s})|}{|G|}
	&= \frac{1}{(q - 1)^{\ur{rk}(G)}}
	\sum_{\vec{\omega} \in \mathcal{D}^{(v)}(\vec{s})}
	q^{|\sf{d}_{\vec{\omega}}|} 
	(q - 1)^{|\sf{e}_{\vec{\omega}}|}.
\end{align}
\end{cor}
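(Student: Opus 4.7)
The plan is to chain \Cref{prop:z-x-s} with \Cref{cor:action-s} and then perform a short length computation. Fixing $v \in W^{J,-}$, \Cref{prop:z-x-s} identifies $|Z_{J,v}^-(\vec{s})|$ with $|U_{w_{J\circ}v}| \cdot |X^{(vw_\circ)}(\vec{s})|$ and $|Z_{J,v}^+(\vec{s})|$ with $|U_{w_{J\circ}v}| \cdot |X^{(w_{J\circ}vw_\circ)}(\vec{s})|$. Summing the per-cell identities of \Cref{cor:action-s} over $\vec{\omega} \in \mathcal{D}^{(v')}(\vec{s})$ and applying the Deodhar cell formula \eqref{eq:cell-r-s-omega} yields, for any $v' \in W$,
\begin{align*}
|X^{(v')}(\vec{s})| = \frac{|G||U_{v'}|}{|B|} \sum_{\vec{\omega} \in \mathcal{D}^{(v')}(\vec{s})} q^{|\sf{d}_{\vec{\omega}}|}(q-1)^{|\sf{e}_{\vec{\omega}}|}.
\end{align*}
Applied with $v' = vw_\circ$ in the $-$ case and $v' = w_{J\circ}vw_\circ$ in the $+$ case, each identity reduces to a numerical prefactor check.

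For that check, I would use $|B| = (q-1)^{\ur{rk}(G)}q^{\ell_S}$ together with $|U_w| = q^{\ell_S - \ell(w)}$ (the split-case specialization of \Cref{lem:rosenlicht}, since $\ell_\bb{B} = \ell$), along with the length additivity $\ell(w_{J\circ}v) = \ell_J + \ell(v)$ for $v \in W^{J,-}$. Direct calculation then yields
\begin{align*}
\frac{|U_{w_{J\circ}v}||U_{vw_\circ}|}{|B|} = \frac{1}{q^{\ell_J}(q-1)^{\ur{rk}(G)}}
\quad\text{and}\quad
\frac{|U_{w_{J\circ}v}||U_{w_{J\circ}vw_\circ}|}{|B|} = \frac{1}{(q-1)^{\ur{rk}(G)}},
\end{align*}
matching the stated prefactors.

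No step presents a genuine obstacle: the substantive content has already been absorbed into \Cref{prop:z-x-s} (which itself rests on \Cref{thm:main-v}, using Kawanaka's count in the $+$ case) and into \Cref{cor:action-s} (the change-of-structure-group relating the Richardson varieties to the $\bb{X}^{(v')}$). What remains is only the chain of substitutions above together with the length bookkeeping.
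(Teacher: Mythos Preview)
Your proposal is correct and follows essentially the same route as the paper: invoke \Cref{prop:z-x-s}, pass through \Cref{cor:action-s} to the Richardson variety, and apply the Deodhar cell count \eqref{eq:cell-r-s-omega}. The only cosmetic difference is in the prefactor bookkeeping: the paper groups $|U_{w_{J\circ}v}|\,|U_{vw_\circ}|$ as $|U_J|$ via \Cref{lem:j} and then uses $|B| = |B_J|\,|U_J|$, whereas you compute the individual $q$-powers directly from length additivity; both yield the same constants.
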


\begin{proof}
We only do the $-$ case, as the $+$ case is similar.
Observe that
\begin{align}
\frac{|U_{w_{J\circ}v}||X^{(vw_\circ)}(\vec{s})|}{|G|}
	= 
	\frac{|U_{w_{J\circ}v}||U_v||R^{(vw_\circ)}(\vec{s})|}{|B|}
	= 
	\frac{|U_J||R^{(vw_\circ)}(\vec{s})|}{|B|}
	= 
	\frac{|R^{(vw_\circ)}(\vec{s})|}{|B_J|}
\end{align}
by \Cref{cor:action-s} and \Cref{lem:j}.
Then apply \Cref{prop:z-x-s} on the left and \eqref{eq:cell-r-s-omega} on the right.
\end{proof}

\begin{rem}
It is not always the case that $|Z_{J, v}^-(\vec{s})/G| = |Z_{J, v}^-(\vec{s})|/|G|$.
Indeed, the $\bb{G}$-action on $\bb{Z}_{J, v}^\pm(\vec{s})$ need not be free, so we cannot apply Lang's theorem.
\end{rem}

\subsection{Traces as Point Counts} \label{subsec:traces-as-counts}

We collect point-counting formulas for specific traces.
Let $\sf{1}_{\vec{s}} = \sf{1}_{s^{(1)}} \ast \cdots \ast \sf{1}_{s^{(\ell)}}$.
Summing \eqref{eq:z-sum} over $W_Jv$ and applying \Cref{lem:tau-g} yields
\begin{align}\label{eq:z-trace}
\tau_G(e_{J, \pm} \otimes \sf{1}_{\vec{s}})
	&= \frac{|Z_J^\pm(\vec{s})|}{|G|}.
\end{align}
Similarly, for any $v \in W$, \eqref{eq:x-sum} yields
\begin{align}\label{eq:x-trace}
\frac{1}{|B|}\, \tau(\sf{1}_{\vec{s}} \ast \sf{1}_{v^{-1}} \ast \sf{1}_{v\vphantom{^{-1}}})
	&= \frac{|X^{(vw_\circ)}(\vec{s})|}{|G|}.
\end{align}
For the purpose of proving \Cref{thm:cell}, we do not actually need these results.
But in later sections, it will be useful to have a $\X$-version of the formula
\begin{align}\label{eq:gltw}
q^{-\ell(v)}
	\tau(\sf{1}_{\vec{s}} \ast \sf{1}_{v^{-1}} \ast \sf{1}_{v\vphantom{^{-1}}})
	&= |R^{vw_\circ}(\vec{s})|
\end{align}
that follows from combining \eqref{eq:x-trace}, \Cref{cor:action-s}, and \Cref{lem:rosenlicht}.
This formula is itself an easier version of Corollary 5.3 in \cite{gltw}.

Namely:
Let $T_{\vec{s}} = T_{s^{(1)}} \cdots T_{s^{(\ell)}}$.
Combining \eqref{eq:cell-r-s} and \eqref{eq:gltw} gives an identity of Laurent polynomials in $\sf{1}_{\vec{s}}$ and $q$ that holds for infinitely many $q$, hence lifts to
\begin{align}\label{eq:gltw-generic}
\X^{-\ell(v)}
\tau(T_{\vec{s}} T_{v^{-1}} T_{v\vphantom{^{-1}}})
	&= \sum_{\vec{\omega} \in \mathcal{D}^{(vw_\circ)}(\vec{s})}
	\X^{|\sf{d}_{\vec{\omega}}|} 
	(\X - 1)^{|\sf{e}_{\vec{\omega}}|},
\end{align}
an identity in $T_{\vec{s}}$ and $\X$.

\subsection{Decomposing Steinberg Varieties}

We can significantly refine case (1) of \Cref{prop:z-x-s}.
For any $\vec{\omega} \in \cal{D}^{(vw_\circ)}(\vec{s})$, let $\bb{Z}_{J, v}^\pm(\vec{s})_{\vec{\omega}}$ be the $\bb{G}$-stable subvariety of $\bb{Z}_{J, v}^\pm(\vec{s})$ defined by
\begin{align}
\bb{Z}_{J, v}^\pm(\vec{s})_{\vec{\omega}}
= \{(\vec{g}\bb{B}, u, y\bb{P}_J) \in \bb{Z}_{J, v}^\pm(\vec{s}) \mid 
\bb{P}_Jy^{-1}g_i\bb{B} = \bb{P}_J vw_\circ\omega_{(i)}w_\circ\bb{B}
\}.
\end{align}
This subvariety only depends on $W_Jvw_\circ$, even though $\vec{\omega}$ depends on $vw_\circ$ itself.
For any $v \in W$, let $\check{\bb{Z}}_{J, v}^-(\vec{s})_{\vec{\omega}}$ be the pullback of $\bb{Z}_{J, v}^-(\vec{s})_{\vec{\omega}}$ along the forgetful map $\check{\bb{E}}_{J, v}^- \to \bb{E}_{J, v}^-$ from \S\ref{subsec:p-to-b}.
By pulling back \Cref{lem:check} and \Cref{prop:minus-case} along $\pr_{0, \ell} : \bb{O}(\vec{s})_{\vec{\omega}} \to (\bb{G}/\bb{B})^2$, we obtain:
 
\begin{prop}\label[prop]{prop:cartesian}
If $v \in W^{J, -}$ and $\vec{\omega} \in \cal{D}^{(vw_\circ)}(\vec{s})$, then the maps $\check{\bb{E}}_{J, v}^- \xrightarrow{\sim} \bb{E}_{J, v}^-$ and $\check{\mult} : \check{\bb{E}}_{J, v}^- \to \bb{O}(v^{-1}, v) = \bb{X}^{(vw_\circ)}$ of \S\ref{subsec:p-to-b} fit into a cartesian diagram:
\begin{equation}\label{eq:z-e-x}
\begin{tikzcd}
\bb{Z}_{J, v}^-(\vec{s})_{\vec{\omega}} \ar{r}
	&\bb{E}_{J, v}^-\\
\check{\bb{Z}}_{J, v}^-(\vec{s})_{\vec{\omega}} \ar[u, "\sim" {anchor=south, rotate=90}] \ar{r} \ar{d}
	&\check{\bb{E}}_{J, v}^- \ar[u, "\sim" {anchor=south, rotate=270}] \ar{d}{\check{\mult}}\\
\bb{X}^{(vw_\circ)}(\vec{s})_{\vec{\omega}} \ar{r}
	&\bb{X}^{(vw_\circ)}
\end{tikzcd}
\end{equation}
Hence, $\check{\bb{Z}}_{J, v}^-(\vec{s})_{\vec{\omega}} \to \bb{X}^{(vw_\circ)}(\vec{s})_{\vec{\omega}}$ forms a smooth fiber bundle that restricts to a $\bb{U}_{w_{J\circ}v}$-torsor over the subvariety $(h\bb{B}, x\bb{B}) = (v\bb{B}, \bb{B})$.
\end{prop}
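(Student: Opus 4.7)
The plan is to obtain the statement by pulling back \Cref{lem:check} and \Cref{prop:minus-case} along the projection $\pr_{0,\ell}: \bb{O}(\vec{s}) \to (\bb{G}/\bb{B})^2$. First, I would realize $\bb{Z}_{J,v}^-(\vec{s})$ as the fiber product $\bb{E}_{J,v}^- \times_{(\bb{G}/\bb{B})^2} \bb{O}(\vec{s})$, where $\bb{E}_{J,v}^- \to (\bb{G}/\bb{B})^2$ is $\mult$ applied to the coordinate triple $(g_0\bb{B}, u, y\bb{P}_J)$ and $\bb{O}(\vec{s}) \to (\bb{G}/\bb{B})^2$ is $\pr_{0,\ell}$. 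The defining relation $ug_0\bb{B} = g_\ell\bb{B}$ of the Steinberg variety makes this identification immediate, and the stratum $\bb{Z}_{J,v}^-(\vec{s})_{\vec{\omega}}$ corresponds to the same fiber product with $\bb{O}(\vec{s})_{\vec{\omega}}$ in place of $\bb{O}(\vec{s})$.

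Next, I would replace $\bb{E}_{J,v}^-$ with $\check{\bb{E}}_{J,v}^-$ via the isomorphism of \Cref{lem:check}, obtaining the upper cartesian square in \eqref{eq:z-e-x} by the very definition of $\check{\bb{Z}}_{J,v}^-(\vec{s})_{\vec{\omega}}$. For the lower square, I would use that $\check{\mult}$ factors through $\bb{O}(v^{-1}, v) = \bb{X}^{(vw_\circ)}$ (as already noted in \Cref{prop:minus-case}), so $\pr_{0,\ell}$ restricted to the relevant locus lifts to a map $\bb{O}(\vec{s})_{\vec{\omega}} \to \bb{X}^{(vw_\circ)}$; here one uses the $\check{\bb{E}}_{J,v}^-$-condition $\bb{B}y^{-1}g_0\bb{B} = \bb{B}v\bb{B}$ together with $u \in y\bb{U}_Jy^{-1}$ to check that $\bb{B}y^{-1}g_\ell\bb{B} = \bb{B}v\bb{B}$ as well. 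I would then identify $\bb{X}^{(vw_\circ)}(\vec{s})_{\vec{\omega}}$ with the corresponding fiber product over $\bb{X}^{(vw_\circ)}$. The smooth-bundle and $\bb{U}_{w_{J\circ}v}$-torsor claims then follow by smooth base change from the analogous statements in \Cref{prop:minus-case}, since the preimage of the point $(v\bb{B}, \bb{B})$ in $\bb{X}^{(vw_\circ)}(\vec{s})_{\vec{\omega}}$ is exactly the subvariety described in the proposition.

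The main technical obstacle is verifying the compatibility of the $\vec{\omega}$-stratifications: the $\bb{P}_J$-level condition $\bb{P}_Jy^{-1}g_i\bb{B} = \bb{P}_Jvw_\circ\omega_{(i)}w_\circ\bb{B}$ defining $\bb{Z}_{J,v}^-(\vec{s})_{\vec{\omega}}$ must translate, under the replacement of $y\bb{P}_J$ by its unique minimal $\bb{B}$-lift $y\bb{B}$, into the $\bb{B}$-level condition $\bb{B}y^{-1}g_i\bb{B} = \bb{B}vw_\circ\omega_{(i)}w_\circ\bb{B}$ used to define $\bb{X}^{(vw_\circ)}(\vec{s})_{\vec{\omega}}$. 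I expect this to follow from the combinatorics of the Deodhar decomposition: for $v \in W^{J,-}$ and $\vec{g}\bb{B} \in \bb{O}(\vec{s})$, the sequence of Bruhat positions $\bb{B}y^{-1}g_i\bb{B}$ is constrained by $\vec{s}$ to realize a specific distinguished-subword pattern, so pinning down the $W_J$-coset at level $i$ together with the initial position $\bb{B}y^{-1}g_0\bb{B} = \bb{B}v\bb{B}$ determines the $\bb{B}$-level positions exactly. Carrying out this stratum-matching argument cleanly — in particular, ensuring that $vw_\circ\omega_{(i)}w_\circ$ really is the element picked out by the pattern, rather than some other representative of its $W_J$-coset — is the step I expect to require the most care.
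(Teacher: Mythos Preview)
Your approach matches the paper's: its entire proof is the sentence preceding the statement, ``By pulling back \Cref{lem:check} and \Cref{prop:minus-case} along $\pr_{0, \ell} : \bb{O}(\vec{s})_{\vec{\omega}} \to (\bb{G}/\bb{B})^2$, we obtain [the proposition].'' The concern you raise in your third paragraph---that the $\bb{P}_J$-level stratum condition defining $\bb{Z}_{J,v}^-(\vec{s})_{\vec{\omega}}$ must agree with the $\bb{B}$-level condition defining $\bb{X}^{(vw_\circ)}(\vec{s})_{\vec{\omega}}$---is not argued separately in the paper either; it is absorbed into that one-line pullback, and the subsequent Corollary (that the $\bb{Z}_{J,v}^-(\vec{s})_{\vec{\omega}}$ partition $\bb{Z}_{J,v}^-(\vec{s})$) is then \emph{deduced from} the proposition rather than verified independently, so you are being more scrupulous than the paper rather than diverging from it.
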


\begin{cor}
If $v \in W^{J, -}$, then the $\bb{Z}_{J, v}^\pm(\vec{s})_{\vec{\omega}}$ are pairwise disjoint and partition $\bb{Z}_{J, v}^\pm(\vec{s})$ as $\vec{\omega}$ runs over $\cal{D}^{(vw_\circ)}(\vec{s})$.
\end{cor}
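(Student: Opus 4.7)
The plan is to deduce both cases from Deodhar's theorem on the cell decomposition of Richardson varieties, combined with a cartesian square that identifies $\check{\bb{Z}}_{J,v}^\pm(\vec{s})_{\vec{\omega}}$ with the pullback of a Deodhar cell.

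For the $-$ case, this follows almost immediately from \Cref{prop:cartesian}. The cartesian square \eqref{eq:z-e-x} exhibits $\check{\bb{Z}}_{J,v}^-(\vec{s})_{\vec{\omega}}$ as the base change of the Deodhar cell $\bb{X}^{(vw_\circ)}(\vec{s})_{\vec{\omega}} \subseteq \bb{X}^{(vw_\circ)}(\vec{s})$ along the map $\check{\bb{Z}}_{J,v}^-(\vec{s}) \to \bb{X}^{(vw_\circ)}(\vec{s})$ obtained by further base change of $\check{\mult}$ along $\bb{X}^{(vw_\circ)}(\vec{s}) \to \bb{X}^{(vw_\circ)}$. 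Deodhar's theorem \cite{deodhar} gives a disjoint decomposition $\bb{X}^{(vw_\circ)}(\vec{s}) = \coprod_{\vec{\omega} \in \cal{D}^{(vw_\circ)}(\vec{s})} \bb{X}^{(vw_\circ)}(\vec{s})_{\vec{\omega}}$; pulling back preserves disjointness and covering, so we get the analogous partition of $\check{\bb{Z}}_{J,v}^-(\vec{s})$, which transports to $\bb{Z}_{J,v}^-(\vec{s})$ across the bijection (or isomorphism in the split case) of \Cref{lem:check}. The only verification needed is that the Deodhar-cell condition $x\bb{B} \xrightarrow{vw_\circ\omega_{(i)}w_\circ} g_i\bb{B}$ matches the coset condition $\bb{P}_J y^{-1} g_i\bb{B} = \bb{P}_J vw_\circ\omega_{(i)}w_\circ\bb{B}$ defining $\bb{Z}_{J,v}^-(\vec{s})_{\vec{\omega}}$ under the forgetful map sending $y\bb{B} \mapsto y\bb{P}_J$; this is a short exercise using $v \in W^{J,-}$ and $\bb{P}_J = \bb{B}\bb{W}_J\bb{B}$.

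For the $+$ case, the plan is to follow the same template via an analogue of \Cref{prop:cartesian} built from \Cref{prop:plus-case}. There, $\check{\bb{E}}_{J,v}^+$ is connected to $\bb{O}(v^{-1}, w_{J\circ}, w_{J\circ}, v)$, whose Deodhar-like cells along $\vec{s}$ are still indexed by $\cal{D}^{(vw_\circ)}(\vec{s})$, since the interior $w_{J\circ} \cdot w_{J\circ}$ cancels for the purpose of Bruhat distance between the first and last coordinates. The fiber bundle $\check{\bb{Z}}_{J,v}^+(\vec{s}) \to \bb{X}^{(vw_\circ)}(\vec{s})$ should then produce the partition of $\bb{Z}_{J,v}^+(\vec{s})$ by pullback of Deodhar cells, mimicking the $-$ case.

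The main obstacle lies in the $+$ case: unlike \Cref{prop:minus-case}, which produces a genuine smooth fibration, \Cref{prop:plus-case} is established only as an identity of pushforward functions, ultimately via Kawanaka's combinatorial theorem. Promoting it to a cartesian diagram on varieties (even just to a compatible bijection on point sets after base change) is the nontrivial step. An alternative, purely combinatorial approach is to verify directly that for every point of $\bb{Z}_{J,v}^+(\vec{s})$, the coset sequence $(W_J\sigma_i)_i$ with $\bb{P}_J y^{-1}g_i\bb{B} = \bb{P}_J \sigma_i\bb{B}$ is encoded by a unique $vw_\circ$-distinguished subword of $\vec{s}$ ending in $e$, using the transition rule $W_J\sigma_i \in \{W_J\sigma_{i-1}, W_J\sigma_{i-1}s^{(i)}\}$ together with the boundary conditions $W_J\sigma_0 = W_J\sigma_\ell = W_Jv$. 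Reducing this Coxeter-combinatorial statement to the standard characterization of distinguished subwords, while handling the degeneracies that arise when $v^{-1}W_Jv$ meets $W_{J'}$ for various $J' \subseteq S$, is where I expect the bulk of the work in the $+$ case to lie.
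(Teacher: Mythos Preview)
Your treatment of the $-$ case is correct and essentially matches the paper's own argument via \Cref{prop:cartesian}: pull back the Deodhar partition of $\bb{X}^{(vw_\circ)}(\vec{s})$ along the map furnished by the cartesian square, then transport through the isomorphism of \Cref{lem:check}.

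For the $+$ case, however, you have taken a much harder road than necessary. You propose to build an analogue of \Cref{prop:cartesian} out of \Cref{prop:plus-case}, and you correctly identify that this would require upgrading Kawanaka's point-count identity to a geometric statement, which is genuinely difficult. You then sketch a direct Coxeter-combinatorial verification as a fallback. Neither is needed. The paper's argument is a one-line observation you have overlooked: the conditions $\bb{P}_J y^{-1} g_i \bb{B} = \bb{P}_J v w_\circ \omega_{(i)} w_\circ \bb{B}$ that cut out $\bb{Z}_{J,v}^\pm(\vec{s})_{\vec{\omega}}$ inside $\bb{Z}_{J,v}^\pm(\vec{s})$ involve only the coordinates $(\vec{g}\bb{B}, y\bb{P}_J)$, not $u$. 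Since the sole difference between $\bb{Z}_{J,v}^-(\vec{s})$ and $\bb{Z}_{J,v}^+(\vec{s})$ is the constraint on $u$ (namely $u \in y\bb{U}_J y^{-1}$ versus $u \in y\bb{V}_J y^{-1}$), and since in both cases $u \in y\bb{P}_J y^{-1}$ forces $\bb{P}_J y^{-1} g_0 \bb{B} = \bb{P}_J y^{-1} g_\ell \bb{B} = \bb{P}_J v \bb{B}$, the strata for the two signs are cut out by \emph{identical} conditions on an ambient space they share. Once the $-$ case shows these conditions give a partition, the $+$ case follows immediately.

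So the gap in your proposal is not an error but a missed shortcut: you never need \Cref{prop:plus-case} or any Kawanaka-type input for this corollary. The deep $+$ case machinery is used elsewhere (for point-count identities like \Cref{prop:z-x-s}(2)), but not here.
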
 

\begin{proof} 
\Cref{prop:cartesian} shows that if $v \in W^{J, -}$, then $\bb{Z}_{J, v}^-(\vec{s})_{\vec{\omega}}$ arises from $\bb{X}^{(vw_\circ)}(\vec{s})_{\vec{\omega}}$ by pullback.
This establishes the statement for the $-$ case.
But the condition defining $\bb{Z}_{J, v}^\pm(\vec{s})_{\vec{\omega}} \subseteq \bb{Z}_{J, v}^\pm(\vec{s})$ does not involve the coordinate $u$ by which $\bb{Z}_{J, v}^-(\vec{s})$ and $\bb{Z}_{J, v}^+(\vec{s})$ differ.
So we also get the statement for the $+$ case.
\end{proof}

\begin{cor}\label[cor]{cor:cohomology}
If $v \in W^{J, -}$ and $\vec{\omega} \in \cal{D}^{vw_\circ}(\vec{s})$, then
\begin{align} 
|Z_{J, v}^-(\vec{s})_{\vec{\omega}}| 
	&= |U_{w_{J\circ}v}| |X^{(vw_\circ)}(\vec{s})|,\\
	&= |G|\, q^{|\sf{d}_{\vec{\omega}}| - \ell_J} 
		(q - 1)^{|\sf{e}_{\vec{\omega}}| - \ur{rk}(G)},
\end{align} 
refining the $-$ cases of \Cref{prop:z-x-s} and \Cref{cor:z-r-s}.

Moreover, the $\bb{G}$-equivariant \'etale cohomology of $\bb{Z}_{J, v}^-(\vec{s})_{\vec{\omega}}$ with $\QL$-coefficients is isomorphic to the $\bb{T}$-equivariant \'etale cohomology of $\bb{R}^{(vw_\circ)}(\vec{s})_{\vec{\omega}}$.
The analogous statement for compactly-supported cohomology holds up to a shift of degree $\ell_J$.

\end{cor}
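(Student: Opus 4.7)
The strategy is to use the cartesian diagram of \Cref{prop:cartesian} throughout. Since $G$ is split, \Cref{lem:check} promotes the left vertical map to an isomorphism of varieties $\check{\bb{Z}}_{J, v}^-(\vec{s})_{\vec{\omega}} \xrightarrow{\sim} \bb{Z}_{J, v}^-(\vec{s})_{\vec{\omega}}$, so it suffices to analyze $\check{\bb{Z}}_{J, v}^-(\vec{s})_{\vec{\omega}}$. The closing assertion of \Cref{prop:cartesian} furnishes $\check{\bb{Z}}_{J, v}^-(\vec{s})_{\vec{\omega}} \to \bb{X}^{(vw_\circ)}(\vec{s})_{\vec{\omega}}$ as a smooth fiber bundle with $\bb{U}_{w_{J\circ}v}$-torsor generic fiber. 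Taking $F$-fixed points immediately gives $|Z_{J, v}^-(\vec{s})_{\vec{\omega}}| = |U_{w_{J\circ}v}| \cdot |X^{(vw_\circ)}(\vec{s})_{\vec{\omega}}|$, which is the first equality of the display.

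To extract the explicit power-of-$q$ expression, I would unpack $|X^{(vw_\circ)}(\vec{s})_{\vec{\omega}}|$ using \Cref{cor:action-s} as $(|G|/|B|)\, |U_{vw_\circ}|\, |R^{(vw_\circ)}(\vec{s})_{\vec{\omega}}|$, then apply \eqref{eq:cell-r-s-omega} to rewrite the last factor as $q^{|\sf{d}_{\vec{\omega}}|}(q-1)^{|\sf{e}_{\vec{\omega}}|}$. The crucial arithmetic is $|U_{w_{J\circ}v}|\,|U_{vw_\circ}| = q^{\ell_S - \ell_J}$, which follows from the length-additivity $\ell(w_{J\circ}v) = \ell_J + \ell(v)$ (using $v \in W^{J, -}$) together with $\ell(vw_\circ) = \ell_S - \ell(v)$ and $|U_w| = q^{\ell_S - \ell(w)}$. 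Dividing by $|B| = q^{\ell_S}(q-1)^{\ur{rk}(G)}$ yields exactly $|G|\, q^{|\sf{d}_{\vec{\omega}}| - \ell_J}(q - 1)^{|\sf{e}_{\vec{\omega}}| - \ur{rk}(G)}$.

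For the cohomological statement, the same geometric chain lifts to $\bb{G}$-equivariant \'etale cohomology in four steps. Step (i): \Cref{lem:check} gives an isomorphism $H^\ast_{\bb{G}}(\bb{Z}_{J, v}^-(\vec{s})_{\vec{\omega}}) \simeq H^\ast_{\bb{G}}(\check{\bb{Z}}_{J, v}^-(\vec{s})_{\vec{\omega}})$. Step (ii): the affine-fiber bundle from \Cref{prop:cartesian} induces an iso to $H^\ast_{\bb{G}}(\bb{X}^{(vw_\circ)}(\vec{s})_{\vec{\omega}})$. Step (iii): equivariant induction along $\bb{X}^{(vw_\circ)} \simeq \bb{G} \times^{\bb{B}} \bb{X}_{\bb{B}}^{(vw_\circ)}$ from \Cref{lem:action}(1) gives an iso to $H^\ast_{\bb{B}}(\bb{X}_{\bb{B}}^{(vw_\circ)}(\vec{s})_{\vec{\omega}})$. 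Step (iv): the affine-fiber bundle $\bb{X}_{\bb{B}}^{(vw_\circ)} \to \bb{R}^{(vw_\circ)}$ from \Cref{lem:action}(2), combined with the contractibility of the unipotent radical of $\bb{B}_{vw_\circ}^- = \bb{T} \ltimes \bb{U}_v^-$ (which lets us substitute $\bb{T}$-equivariance for $\bb{B}$-equivariance on $\bb{T}$-stable spaces), closes out at $H^\ast_{\bb{T}}(\bb{R}^{(vw_\circ)}(\vec{s})_{\vec{\omega}})$.

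For compactly-supported cohomology, each affine bundle shifts degree by its relative dimension with a corresponding Tate twist: step (ii) contributes $\dim \bb{U}_{w_{J\circ}v} = \ell_S - \ell_J - \ell(v)$, while step (iv) contributes $\dim \bb{U}_{vw_\circ}^- = \ell_S - \ell(v)$, and the net shift of $\ell_J$ claimed in the corollary emerges as the difference of these two relative dimensions. The main obstacle will be verifying that step (iii)—the equivariant induction—transports compact support without introducing an additional shift, and that the passage from $\bb{B}$- to $\bb{T}$-equivariance via the Levi decomposition is bookkept correctly in the compactly-supported setting.
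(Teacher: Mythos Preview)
Your proposal follows essentially the same route as the paper: \Cref{prop:cartesian} for the point count and the first cohomological reduction, then the change-of-group results (\Cref{lem:action}/\Cref{cor:action-s}) to pass from $\bb{G}$-equivariance on $\bb{X}^{(vw_\circ)}(\vec{s})_{\vec{\omega}}$ down to $\bb{T}$-equivariance on $\bb{R}^{(vw_\circ)}(\vec{s})_{\vec{\omega}}$. The paper compresses your steps (iii)--(iv) into one invocation of \Cref{cor:action-s} followed by the reduction $\bb{B} = \bb{T} \ltimes \bb{U}$.

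Two bookkeeping issues to fix. First, the unipotent radical of $\bb{B}_{vw_\circ}^-$ is $\bb{U}_v$, not $\bb{U}_v^-$: one computes $\bb{B}_{vw_\circ}^- = \bb{B} \cap (vw_\circ)\bb{B}_-(vw_\circ)^{-1} = \bb{B} \cap v\bb{B}v^{-1} = \bb{T} \ltimes \bb{U}_v$. (Your dimension $\ell_S - \ell(v)$ is nonetheless the dimension of the correct group $\bb{U}_v$.) Second, your ``difference of two relative dimensions'' is not how the shift arises. The paper tracks three contributions: $+\dim \bb{U}_{w_{J\circ}v}$ from the affine bundle of \Cref{prop:cartesian}, $+\dim \bb{U}_{vw_\circ}$ from \Cref{cor:action-s}, and $-\dim \bb{U}$ from the $\bb{B}$-to-$\bb{T}$ reduction; these sum to $-\ell_J$. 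Your framing as a difference happens to give the right magnitude but obscures that there are three terms, and your step (iv) conflates two distinct shifts (the $\bb{U}_{vw_\circ}$-fiber of $\bb{X}_\bb{B} \to \bb{R}$ and the unipotent radical of the residual symmetry group). The uncertainty you flag about step (iii) is unwarranted: the stack equivalence $[\bb{X}/\bb{G}] \simeq [\bb{X}_\bb{B}/\bb{B}]$ transports compactly-supported cohomology without shift.
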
 

\begin{proof}
The	first claim follows from \Cref{prop:cartesian} by taking $F$-fixed points.
As for the second, let $\ur{H}_c^\ast$ denote compactly-supported \'etale cohomology.
Then
\begin{align}
&\ur{H}_{c, \bb{G}}^\ast(\bb{Z}_{J, v}^-(\vec{s})_{\vec{\omega}})\\
	&\simeq \ur{H}_{c, \bb{G}}^\ast(\bb{X}^{(vw_\circ)}(\vec{s})_{\vec{\omega}})[\dim \bb{U}_{w_{J\circ}v}]
		&&\text{by \Cref{prop:cartesian}}\\
	&\simeq \ur{H}_{c, \bb{B}}^\ast(\bb{R}^{(vw_\circ)}(\vec{s})_{\vec{\omega}})[\dim \bb{U}_{w_{J\circ}v} + \dim \bb{U}_{vw_\circ}]
		&&\text{by \Cref{cor:action-s}}\\
	&\simeq \ur{H}_{c, \bb{T}}^\ast(\bb{R}^{(vw_\circ)}(\vec{s})_{\vec{\omega}})[\dim \bb{U}_{w_{J\circ}v} + \dim \bb{U}_{vw_\circ} - \dim \bb{U}]
	&&\text{since $\bb{B} = \bb{T} \ltimes \bb{U}$}.
\end{align}
Finally, $\dim \bb{U}_{w_{J\circ}v} + \dim \bb{U}_{vw_\circ} - \dim \bb{U} = -\ell_J$ by \Cref{lem:bruhat}.
The statements for ordinary cohomology are the same, except there are no shifts.
\end{proof}

\begin{rem}\label[rem]{rem:stz}
When $J = S$, we must have $v = e$ in \eqref{eq:z-e-x}.
Here the vertical arrows become trivial, giving isomorphisms $\bb{E}_S^- \simeq \check{\bb{E}}_S^- \simeq \bb{G}/\bb{B}$ and $\bb{Z}_S^-(\vec{s}) \simeq \bb{X}^{(w_\circ)}(\vec{s})$.

When $G = \PGL_n(\bb{F})$, so that $W = S_n$, and $\beta$ is the positive braid on $n$ strands defined by $\vec{s}$, the stack denoted $\cal{M}(\beta^\circ)$ in \cite{stz} is precisely $[\bb{X}^{(w_\circ)}(\vec{s})/\bb{G}]$.
Their Proposition 6.31 gives a decomposition of another stack $\cal{M}(\beta^\succ)$ into substacks indexed by rulings of a Legendrian link $\beta^\succ$.
At the same time,
\begin{align} 
\cal{M}(\beta^\succ) \simeq \cal{M}((\Delta\beta\Delta)^\circ) \simeq \cal{M}((\beta \Delta^2)^\circ),
\end{align} 
where $\Delta$ is the \dfemph{half-twist}: the minimal positive braid that lifts $w_\circ \in S_n$.
Note that $\cal{M}((\Delta\beta\Delta)^\circ)$ is also isomorphic to $[\bb{X}^{(e)}(\vec{s})/\bb{G}]$.

In this way, our stacks $[\bb{Z}_{J, v}^-(\vec{s})/\bb{G}]$ generalize the stacks $\cal{M}(\beta^\circ)$ and $\cal{M}(\beta^\succ)$ in \cite{stz}.
Our decomposition of $\bb{Z}_{J, v}^-(\vec{s})$ into subvarieties $\bb{Z}_{J, v}^-(\vec{s})_{\vec{\omega}}$ generalizes their ruling decomposition of $\cal{M}(\beta^\succ)$:
Indeed, \Cref{lem:action} and \Cref{prop:cartesian} show that the former corresponds to the Deodhar decomposition of $\bb{R}^{(v)}(\vec{s})$ under change of structure group from $\bb{G}$ to $\bb{B}/\bb{U}_v$, while \cite{achllw} shows that the latter corresponds to the Deodhar decomposition under change of structure group from $\bb{G}$ to $\bb{T}$.
\end{rem} 

\begin{rem}\label[rem]{rem:mellit}
In \cite{mellit}, Mellit proves the curious Lefschetz property for tame $\bb{GL}_n$ character varieties---or more precisely, their complex analogues---by first decomposing (vector bundles over) them into braid varieties, and the latter into Deodhar cells.\footnote{Note that Mellit's paper uses the term \emph{stratification} for the Deodhar decomposition.
Nonetheless, Dudas showed in \cite{dudas} that Deodhar decompositions in type $B$ need not be stratifications in the technical sense:
The Zariski closure of a cell need not be a union of cells.}
The braid varieties are denoted $Y_\beta(t)$, where $\beta$ is a positive braid on $n$ strands represented by an explicit word in $W$, and $t$ is a sufficiently generic element of the maximal torus $\bb{T}$.

Due to the genericity condition, $Y_\beta(t)$ is qualitatively different from the varieties that we discussed earlier in this section, even up to change of structure group.
It is essentially the fiber at $t$ of a certain formal monodromy map from the open Richardson variety of $\beta$ into $\bb{T}$.
(The use of the Springer resolution in \cite[\S{8}]{mellit} is unrelated to ours.)

Nonetheless, at the level of the tame character variety, Mellit's work does involve structure related to relative norms.
To explain, suppose that the character variety is built from a Riemann surface of genus zero, with $k + 1$ punctures, where the monodromy conditions are specified by semisimple conjugacy classes in $\bb{G}$:
say, $[C_i]$ for $1 \leq i \leq k + 1$, with $C_i$ in $\bb{T}$ for all $i$.
For each $i$, let $J_i \subseteq S$ be the subset of reflections that fix $C_i$.
Theorem 7.3.1 of \cite{mellit} states that when the $[C_i]$ satisfy a certain genericity assumption, there is a vector bundle with fiber $\bb{U}$ over the character variety that is in turn a disjoint union of braid varieties indexed by sequences $\vec{v} = (v_1, \ldots, v_k)$, where $v_i$ runs over $W^{J_i, -}$ for each $i$.
The positive braid $\beta$ corresponding to $\vec{v}$ is represented by the following word in $W$:
\begin{align}
(v_1^{-1}, v_1, \ldots, v_k^{-1}, v_k).
\end{align}
Thus, the decomposition of (the vector bundle over) the character variety into braid varieties is curiously similar to our decomposition of $\bb{Z}_J^\pm(\vec{s})$ into its subvarieties  $\bb{Z}_{J, v}^\pm(\vec{s})$, especially when $k = 1$ in the setup above.
\end{rem}

\subsection{Framed Steinberg Varieties}

In \S\ref{subsec:change-of-group}, the passage from $\bb{X}^{(v)}$ to $\bb{X}_\bb{B}^{(v)}$ to $\bb{R}^{(v)}$ encoded a passage from $\bb{G}$-symmetry to $\bb{B}$-symmetry to $\bb{B}_v^-$-symmetry.
Instead of the $\bb{G}$-varieties $\bb{Z}_J^\pm(\vec{s})$ and their strata, we could have used $\bb{P}_J$-varieties
\begin{align}
\bb{Z}_{J, \square}^-(\vec{s})
	&= \{(\vec{g}\bb{B}, u) \in \bb{O}(\vec{s}) \times \bb{U}_J \mid 
	ug_0\bb{B} = g_\ell\bb{B}
	\},\\
\bb{Z}_{J, \square}^+(\vec{s})
	&= \{(\vec{g}\bb{B}, u) \in \bb{O}(\vec{s}) \times \bb{V}_J \mid 
	ug_0\bb{B} = g_\ell\bb{B}
	\}
\end{align}
and strata cut out by conditions of the form $\bb{P}_Jg_0\bb{B} = \bb{P}_Jg_\ell\bb{B} = \bb{P}_Jv\bb{B}$.

Analogues of \Cref{prop:cartesian} and its corollaries hold for the $\square$ versions.
In fact, the $\bb{G}$-equivariant cohomology of $\bb{Z}_{J, v}^\pm(\vec{s})_{\vec{\omega}}$ matches the $\bb{P}_J$-equivariant cohomology of its $\square$-analogue, by construction.

\section{Parking Numbers}\label{sec:parking}

\subsection{}

\emph{In this subsection and the next, $(W, S)$ denotes an arbitrary irreducible, finite Coxeter system with Coxeter number $h$}.
We write $\sf{V}$ to denote the irreducible reflection representation of $W$, and $\chi_{\sf{V}}$ to denote its character.

For any integer $p$, let $\sf{V}_p$ denote the \dfemph{Galois conjugate} of $\sf{V}$ that has the same underlying vector space but character given by $\chi_{\sf{V}_p}(w) = \chi_\sf{V}(w^p)$.
If $W$ is crystallographic and $p$ is coprime to $h$, then $\sf{V}_p \simeq \sf{V}$.

For any integer $k \geq 0$, we set $[k]_\X = 1 + \X + \cdots + \X^{k - 1}$.
Generalizing the formula in \S\ref{subsec:parking-intro} for the crystallographic case, we define the \dfemph{rational parabolic $\X$-parking numbers} of $(W, p, J)$ to be
\begin{align}\label{eq:parking-twist}
\Park_{W, p}^{J, \pm}(\X)
	= \prod_{i = 1}^{|J|} \frac{[p \pm e_i^{J, p}]_\X}{[d_i^J]_\X},
\end{align}
where $d_1^J, \ldots, d_{|J|}^J$ are the fundamental degrees of $W_J$, and $e_1^{J, p},\ldots, e_{|J|}^{J, p}$ are the \dfemph{exponents} or \dfemph{fake degrees} of the $W_J$-action on $\sf{V}_p^\ast$, as defined in \cite{br}.

Recall that a Coxeter word in $S$ is a word $\vec{c}$ formed by placing the elements of $S$ in any order.
We write $\vec{c}^p$ for the concatenation of $p$ copies of $\vec{c}$.
The goal of this section is the following identity, which implies \Cref{cor:parking} in the $\X \to 1$ limit.

\begin{thm}\label[thm]{thm:parking-q}
If $W$ is crystallographic, then for any Coxeter word $\vec{c}$ in $S$, integer $p > 0$ coprime to $h$, and subset $J \subseteq S$, we have
\begin{align}
\Park_{W, p}^{J, \pm}(\X)
	&= \frac{1}{(\X - 1)^r}
	\sum_{v \in W^{J, \mp}} 
	\sum_{\vec{\omega} \in \cal{D}^{(v)}(\vec{c}^p)}
	\X^{|\sf{d}_{\vec{\omega}}|} 
		(\X - 1)^{|\sf{e}_{\vec{\omega}}|}.
\end{align}
(Note the sign flip.)
\end{thm}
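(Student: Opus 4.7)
The plan is to combine Theorem~\ref{thm:trace} with the $\X$-deformed identity~\eqref{eq:gltw-generic} of Deodhar-style cell counts, and then invoke the interpretation from~\cite{trinh} of $\tau_G(\whitearg \otimes T_{\vec{c}^p})$ as the graded $W$-character of the rational parking space.

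First, I would rewrite the right-hand side of Theorem~\ref{thm:parking-q}. Right multiplication by $w_\circ$ swaps $W^{J, -}$ with $W^{J, +}$, since $\Asc(vw_\circ) = \Des(v)$. So substituting $v \mapsto uw_\circ$ and applying~\eqref{eq:gltw-generic} with $\vec{s} = \vec{c}^p$ gives
\begin{align*}
\sum_{v \in W^{J, \mp}} \sum_{\vec{\omega} \in \cal{D}^{(v)}(\vec{c}^p)} \X^{|\sf{d}_{\vec{\omega}}|}(\X - 1)^{|\sf{e}_{\vec{\omega}}|}
&= \sum_{u \in W^{J, \pm}} \X^{-\ell(u)}\, \tau(T_{\vec{c}^p} T_{u^{-1}} T_{u}) \\
&= \tau[\Sigma_{J, \pm}](T_{\vec{c}^p}),
\end{align*}
using that $T_{\vec{c}^p} = T_{\vec{c}}^p$ and the definition of $\Sigma_{J, \pm}$ from the introduction. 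By Theorem~\ref{thm:trace} applied to the split reductive group $G$ with Weyl group $W$, for which $\ur{rk}(G) = r$, this equals $(\X - 1)^r\, \tau_G(e_{J, \pm} \otimes T_{\vec{c}^p})$. Dividing by $(\X - 1)^r$ reduces Theorem~\ref{thm:parking-q} to the identity
\begin{align*}
\Park_{W, p}^{J, \pm}(\X) = \tau_G(e_{J, \pm} \otimes T_{\vec{c}^p}).
\end{align*}

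To prove this identity, I would invoke the result from~\cite{trinh} recalled in \S\ref{subsec:parking-intro}: as a trace on $\bb{Q}W$, the map $z \mapsto \tau_G(z \otimes T_{\vec{c}^p})$ is the graded character of the rational parking space of $(W, p)$ in the sense of \cite{arr, alw}. Since $e_{J, +}$, resp.\ $e_{J, -}$, is the central idempotent cutting out the $W_J$-trivial, resp.\ $W_J$-sign, isotypic component, the right-hand side computes the graded dimension of the corresponding isotypic component of the parking space. What remains is to match this graded dimension with $\Park_{W, p}^{J, \pm}(\X) = \prod_i [p \pm e_i^{J, p}]_\X / [d_i^J]_\X$.

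The main obstacle is this last step. I plan to handle it using the realization of the parking space in \cite{arr, alw} as the quotient of a polynomial ring on $\sf{V}_p^\ast$ by a regular sequence of $W$-invariants of degrees $p, 2p, \ldots$, combined with a parabolic Molien-style argument: the $[d_i^J]_\X^{-1}$ factors arise from the $W_J$-coinvariants of the ambient polynomial ring, while the $[p \pm e_i^{J, p}]_\X$ factors encode the $W_J$-character of $\sf{V}_p$ through its fake-degree polynomial. The hypothesis $\gcd(p, h) = 1$ enters precisely to guarantee $\sf{V}_p \simeq \sf{V}$ as $W$-modules, so the exponents $e_i^{J, p}$ are well-defined and equal to those computed from $\sf{V}$. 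Setting $\X \to 1$ at the end recovers Corollary~\ref{cor:parking}.
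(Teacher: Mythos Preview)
Your proposal is correct and follows essentially the same route as the paper: rewrite the right-hand side via \eqref{eq:gltw-generic} and the $w_\circ$-swap between $W^{J,-}$ and $W^{J,+}$ to reach $\tau[\Sigma_{J,\pm}](T_{\vec{c}}^p)$, apply Theorem~\ref{thm:trace} to convert to $(\X-1)^r\,\tau_G(e_{J,\pm}\otimes T_{\vec{c}}^p)$, and then identify this with $\Park_{W,p}^{J,\pm}(\X)$ using the result from \cite{trinh} that $\tau_G(\whitearg\otimes T_{\vec{c}}^p)$ is the graded character of $\sf{L}_{p/h}$.

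The only place where your sketch is looser than the paper is the final step, matching $\tr(e_{J,\pm}\mid \sf{L}_{p/h})$ with the product $\prod_i [p\pm e_i^{J,p}]_\X/[d_i^J]_\X$. The paper isolates this as Proposition~\ref{prop:parking-to-rca} and proves it by combining the determinantal formula \eqref{eq:rca-det} (from the BGG resolution) with the Orlik--Solomon exterior-algebra theorem applied to the $W_J$-invariants of $\sf{S}\otimes\bigwedge\sf{V}_p^\ast(-p)$. Your ``parabolic Molien-style argument'' is heading to the same place, but you should be aware that a naive Molien computation of $(\sf{L}_{p/h})^{W_J}$ does not factor automatically; one needs the Orlik--Solomon input (or an equivalent) to see that the numerator is a genuine product over the fake degrees of $\sf{V}_p^\ast|_{W_J}$ rather than an unstructured alternating sum.
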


\begin{conj}
\Cref{thm:parking-q} generalizes to any irreducible finite Coxeter system when $\Park_{W, p}^{J, \pm}(\X)$ is defined using \eqref{eq:parking-twist}.
\end{conj}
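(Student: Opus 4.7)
The plan is to combine three ingredients already available in the paper: the trace formula \eqref{eq:gltw-generic}, \Cref{thm:trace}, and the identification in \cite{trinh} of $\tau_G(\whitearg \otimes T_{\vec{c}^p})$ with the graded character of the rational parking space $\Park_{W, p}$ for $(W, p)$.

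First, I would apply \eqref{eq:gltw-generic} with $\vec{s} = \vec{c}^p$, multiply by $\X^{-\ell(v)}$, and sum over $v \in W^{J, \pm}$. This gives
\begin{align*}
\tau(T_{\vec{c}^p}\, \Sigma_{J, \pm})
  = \sum_{v \in W^{J, \pm}} \sum_{\vec{\omega} \in \cal{D}^{(vw_\circ)}(\vec{c}^p)} \X^{|\sf{d}_{\vec{\omega}}|}(\X - 1)^{|\sf{e}_{\vec{\omega}}|}.
\end{align*}
Reindexing by $u \vcentcolon= vw_\circ$ and using the identity $\Des(xw_\circ) = S \setminus \Des(x)$ to see that right multiplication by $w_\circ$ bijects $W^{J, \pm}$ with $W^{J, \mp}$, this becomes
\begin{align*}
\tau(T_{\vec{c}^p}\, \Sigma_{J, \pm})
  = \sum_{u \in W^{J, \mp}} \sum_{\vec{\omega} \in \cal{D}^{(u)}(\vec{c}^p)} \X^{|\sf{d}_{\vec{\omega}}|}(\X - 1)^{|\sf{e}_{\vec{\omega}}|},
\end{align*}
which exactly accounts for the sign flip asserted in \Cref{thm:parking-q}.

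Second, I would apply \Cref{thm:trace} together with $\ur{rk}(G) = r$ (taking $G$ split with irreducible Weyl group $W$ of rank $r$) to rewrite the left-hand side as
\begin{align*}
\tau(T_{\vec{c}^p}\, \Sigma_{J, \pm}) = \tau[\Sigma_{J, \pm}](T_{\vec{c}^p}) = (\X - 1)^r\, \tau_G(e_{J, \pm} \otimes T_{\vec{c}^p}).
\end{align*}
Dividing by $(\X - 1)^r$ then reduces \Cref{thm:parking-q} to the identity
\begin{align*}
\tau_G(e_{J, \pm} \otimes T_{\vec{c}^p}) = \Park_{W, p}^{J, \pm}(\X).
\end{align*}

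Third, for this reduced identity, I would invoke the theorem of \cite{trinh} identifying $\tau_G(\whitearg \otimes T_{\vec{c}^p})$ with the graded character of $\Park_{W, p}$ in the sense of \cite{arr, alw}. Then $\tau_G(e_{J, \pm} \otimes T_{\vec{c}^p})$ is the graded dimension of the $W_J$-invariant (resp.\ $W_J$-sign-isotypic) subspace of $\Park_{W, p}$. A Molien-type calculation, using Chevalley--Shephard--Todd applied to $W_J$ acting on $\sf{V}$ (yielding the denominator $\prod [d_i^J]_\X$) and the fake-degree decomposition of $\sf{V}_p^\ast$ under $W_J$ (yielding the numerator $\prod [p \pm e_i^{J, p}]_\X$), should identify these graded dimensions with $\Park_{W, p}^{J, \pm}(\X)$. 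This last Molien computation is the main obstacle: it requires a sufficiently explicit $W_J$-equivariant description of $\Park_{W, p}$, respecting the $p/h$-twist by $\sf{V}_p$, to extract the graded multiplicities of the trivial and sign characters of $W_J$ and to confirm that the relevant exponents are those of $W_J$ on $\sf{V}_p^\ast$ rather than of $W$ on $\sf{V}^\ast$.
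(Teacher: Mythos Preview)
The statement you are asked to prove is a \emph{conjecture}: the paper does not prove it, and your proposal does not either. What you have written is essentially the paper's own proof of \Cref{thm:parking-q} in the crystallographic case (see \S\ref{subsec:trace-to-cell}), not an argument for its non-crystallographic extension.

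The gap is structural. Every step after the first requires a split reductive group $G$ with Weyl group $W$: the bitrace $\tau_G$ is defined via Springer fibers for $G$; \Cref{thm:trace} is proved through the partial Springer resolutions of $G$; and the identity \eqref{eq:tau-g-to-rca} from \cite{trinh} is a statement about $\tau_G$. When $W$ is non-crystallographic (types $H_3$, $H_4$, $I_2(m)$ for $m \notin \{2,3,4,6\}$), no such $G$ exists, so none of these inputs is available. Your sentence ``taking $G$ split with irreducible Weyl group $W$'' is precisely where the argument breaks. The Molien-type step you sketch at the end is fine in any type---it is \Cref{prop:parking-to-rca}, which the paper proves without a crystallographic hypothesis---but it only gets you $\Park_{W,p}^{J,\pm}(\X) = \sum_i \X^i \tr(e_{J,\pm} \mid \sf{L}_{p/h}^i)$. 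Bridging from there to the Deodhar sum without passing through $\tau_G$ is exactly what remains open.
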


\subsection{From Products to Traces}

We continue to allow non-crystallographic $W$.
Let $\bb{K}$ be a splitting field for $W$, so that $\sf{V}_p$ is defined over $\bb{K}$.
When $W$ is crystallographic, we can take $\bb{K} = \bb{Q}$.

There is a graded representation $\sf{L}_{p/h} = \bigoplus_i \sf{L}_{p/h}^i$ of $W$ that may be called the \dfemph{rational parking space} for $(W, p)$, in the spirit of \cite{arr, alw}, as its graded dimension is $[p]_\X^r$.
Explicitly, $\sf{L}_{p/h}$ is the representation of $W$ underlying the simple spherical module of the rational Cherednik algebra of $W$ at parameter $p/h$, equipped with a shift of the $W$-stable grading arising from the Euler element.

We view the graded character of $\sf{L}_{p/h}$ as a $\bb{K}[\X]$-valued trace on $\bb{K} W$.
To describe it explicitly, let $\sf{S} = \bigoplus_i \sf{S}^i$ and $\bigwedge_p = \bigoplus_j \bigwedge_p^j$, where 
\begin{align} 
	\sf{S}^i \vcentcolon= \Sym^i(\sf{V}^\ast)
	\quad\text{and}\quad
	\textstyle\bigwedge_p^j \vcentcolon= 	\textstyle\bigwedge^j(\sf{V}_p^\ast).
\end{align} 
Then for all $w \in W$, we have
\begin{align}\label{eq:rca-det}
\sum_i \X^i \tr(w \mid \sf{L}_{p/h}^i)
	&= \left.\left[
		\sum_{i, j} \X^i t^j \tr(w \mid \sf{S}^i \otimes \textstyle\bigwedge_p^j)
		\right]\right|_{t \to -\X^p}\\
	&= \frac{\det(1 - \X^p w \mid \sf{V}_p^\ast)}{\det(1 - \X w \mid \sf{V}^\ast)}.
\end{align}
This formula arises from a so-called BGG-resolution of $\sf{L}_{p/h}$ by Verma modules for the rational Cherednik algebra, whose underlying $W$-representations take the form $\sf{S} \otimes \bigwedge^j$.

\begin{prop}\label[prop]{prop:parking-to-rca}
For any integer $p > 0$ coprime to $h$ and subset $J \subseteq S$, we have
\begin{align}\label{eq:parking-to-rca}
	\Park_{W, p}^{J, \pm}(\X)
	= \sum_i \X^i \tr(e_{J, \pm} \mid \sf{L}_{p/h}^i).
\end{align} 
\end{prop}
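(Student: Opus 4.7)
The plan is to evaluate the right-hand side using the BGG-type identity stated immediately above \Cref{prop:parking-to-rca}. Expanding $e_{J, \pm} = \frac{1}{|W_J|}\sum_{w \in W_J} (\pm 1)^{\ell(w)} w$ and applying \eqref{eq:rca-det} linearly in $w$ reduces the task to evaluating
\begin{align*}
\frac{1}{|W_J|} \sum_{w \in W_J} (\pm 1)^{\ell(w)} \frac{\det(1 - \X^p w \mid \sf{V}_p^\ast)}{\det(1 - \X w \mid \sf{V}^\ast)}
\end{align*}
in closed form, as a rational function in $\X$.

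The next step is to decompose $\sf{V}|_{W_J} = \sf{V}_J \oplus \sf{V}^{W_J}$, where $\sf{V}_J$ denotes the reflection representation of $W_J$ and $\sf{V}^{W_J}$ is the $(r - |J|)$-dimensional fixed subspace. The decomposition is compatible with the Galois twist, so each determinant factorizes. Because the sign $(\pm 1)^{\ell(w)}$ depends only on the action of $w$ on $\sf{V}_J$, the contribution from $\sf{V}^{W_J}$ is the constant factor $(1 - \X^p)^{r - |J|}/(1 - \X)^{r - |J|} = [p]_\X^{r - |J|}$, which accounts for the $r - |J|$ ``trivial'' factors implicit in $\Park_{W, p}^{J, \pm}(\X)$.

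For the averaged ratio of determinants on $\sf{V}_J$ itself, I would invoke the Galois-twisted form of Solomon's theorem for $W_J$ acting on its own reflection representation: in the $+$ case, reading the sum as the bi-graded Hilbert series of $(\sf{S}(\sf{V}_J^\ast) \otimes \textstyle\bigwedge(\sf{V}_{J, p}^\ast))^{W_J}$ specialized at $t \mapsto -\X^p$ produces $\prod_{i=1}^{|J|}[p + e_i^{J, p}]_\X/[d_i^J]_\X$. The $-$ case follows by the analogous computation on the sign-isotypic component, using the character identity $\det(w \mid \sf{V}_J^\ast)\det(1 + t w \mid \sf{V}_{J, p}^\ast) = t^{|J|}\det(1 + t^{-1} w \mid \sf{V}_{J, p}^\ast)$ to convert the sign-twisted sum into an ordinary twisted Solomon sum, yielding $\prod_{i=1}^{|J|}[p - e_i^{J, p}]_\X/[d_i^J]_\X$ up to a tracked monomial factor. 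The main obstacle I foresee is controlling the Galois-twisted Solomon theorem itself, i.e., establishing that $(\sf{S}(\sf{V}_J^\ast) \otimes \textstyle\bigwedge(\sf{V}_{J, p}^\ast))^{W_J}$ is free over $\sf{S}(\sf{V}_J^\ast)^{W_J}$ with generators in the expected bi-degrees; in the crystallographic setting of \Cref{thm:parking-q} with $p$ coprime to $h$, this can be sidestepped by identifying $\sf{V}_p \simeq \sf{V}$ as $W$-representations---and hence $\sf{V}_{J, p} \simeq \sf{V}_J$ as $W_J$-representations---reducing to the classical untwisted Solomon theorem.
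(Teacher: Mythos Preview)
Your approach is essentially the same as the paper's: both start from the determinant-ratio formula \eqref{eq:rca-det}, interpret the average over $W_J$ as a (sign-twisted) bigraded Hilbert series of $\sf{S}\otimes\bigwedge_p$, and then invoke a Solomon-type exterior-algebra statement to obtain the product formula. The paper differs from your write-up in two minor ways. First, rather than splitting off the trivial summand $\sf{V}^{W_J}$ and working with $\sf{S}(\sf{V}_J^\ast)\otimes\bigwedge((\sf{V}_J)_p^\ast)$, the paper applies the Solomon-type result directly to $W_J$ acting on the full space $\sf{V}$ and on $\sf{U}(-p)=\sf{V}_p^\ast(-p)$; the trivial summand then simply contributes the extra degree-$1$ invariants and exponent-$0$ factors in the final product. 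Second, and more importantly, the ``Galois-twisted Solomon theorem'' that you correctly identify as the only nontrivial input is supplied in the paper by Theorem~3.1 and Corollary~3.2 of Orlik--Solomon \cite{os}: their hypothesis is that the sum of the fake degrees of the $W_J$-module $\sf{U}(-p)$ equals the fake degree of its top exterior power, which the paper verifies from the decomposition $\sf{U}(-p)\simeq (\sf{V}_J)_p(-p)\oplus(\text{trivial})^{r-|J|}$. This covers the non-crystallographic case as well, so you do not need to fall back on $\sf{V}_p\simeq\sf{V}$. Your treatment of the $-$ case via the determinant identity is the standard dualization, and the paper likewise leaves it as ``similar''.
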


\begin{proof}
We only do the $+$ case, as the $-$ case is similar.

Set $\sf{U} = \sf{V}_p$.
Using the reflecting hyperplanes for $S$, we can decompose the $W_J$-action on $\sf{V}$ as a direct sum $\sf{V} \simeq \sf{V}_J \oplus \sf{V}_J^\intercal$, where $\sf{V}_J^\intercal$ is a $(r - |J|)$-fold power of the trivial representation.
Applying the Galois twist and grading shift that take $\sf{V}$ to $\sf{U}(-p)$, we get a direct sum $\sf{U}(-p) \simeq \sf{U}_J(-p) \oplus \sf{U}_J^\intercal(-p)$, where $\sf{U}_J(-p) \simeq (\sf{V}_J)_p(-p)$ and $\sf{U}_J^\intercal(-p)$ remains a $(r - |J|)$-fold power of the trivial representation.

Therefore, the fake degrees for $\sf{U}(-p)$ as a representation of $W_J$ are formed by taking the $|J|$ fake degrees for $(\sf{V}_J)_p$, appending $r - |J|$ zeroes, and shifting everything up by $p$.
In particular, $\sf{U}(-p)$ satisfies the hypothesis in Theorem 3.1 and Corollary 3.2 of \cite{os} that the sum of the fake degrees is equal to the fake degree for its $r$th exterior power.
We deduce that $(\sf{S} \otimes \bigwedge \sf{U}(-p))^{W_J}$ remains isomorphic to an exterior algebra over $\sf{S}^{W_J}$.
So we arrive at the formula
\begin{align}
\sum_{i, j} \X^i t^j 
	\dim {(\sf{S}^i \otimes \textstyle\bigwedge_p^j)^{W_J}}
	 = \prod_i \frac{1 + t\X^{p + e_i^J}}{1 - \X^{d_i^J}},
\end{align}
which gives the desired product formula at $t \to -1$.
\end{proof}

\begin{ex}
Taking $J = \emptyset$ and $J = S$ in \Cref{prop:parking-to-rca}, we recover the formulas
\begin{align}
\Cat_{W, p}(\X)
	= \sum_i \X^i \dim {(\sf{L}_{p/h}^i)^W}
	\quad\text{and}\quad
	[p]_\X^r = \sum_i \X^i \dim {\sf{L}_{p/h}^i},
\end{align}
respectively.
\end{ex} 

\subsection{From Traces to Cells}\label{subsec:trace-to-cell}

Recall the notation $T_{\vec{c}} \in H_W$ from \S\ref{subsec:traces-as-counts}.
In \cite{trinh}, the first author showed that the value at $T_{\vec{c}}$ of the trace on $H_W$ corresponding to $\tau_G$ is the graded character of $\sf{L}_{p/h}$ up to a shift.
In our notation, this is the identity
\begin{align}\label{eq:tau-g-to-rca}
\tau_G(w \otimes T_{\vec{c}}^p)
	= \sum_i \X^i \tr(w \mid \sf{L}_{p/h}^i).
\end{align}
Now \emph{assume that $W$ is crystallographic}.
Pick split semisimple $G$ with Weyl group $W$.
In this case, 
\begin{align}
\Park_{W, p}^{J, \pm}(\X)
	&= \sum_i \X^i \tr(e_{J, \pm} \mid \sf{L}_{p/h}^i).
		&&\text{by \Cref{prop:parking-to-rca}}\\
	&= \tau_G(e_{J, \pm} \otimes T_{\vec{c}}^p)
		&&\text{by \eqref{eq:tau-g-to-rca}}\\
	&= \frac{1}{(\X - 1)^r} 
		\sum_{v \in W^{J, \pm}}
				\X^{-\ell(v)} \tau(T_{\vec{c}}^p T_{v^{-1}} T_{v\vphantom{^{-1}}})
		&&\text{by \Cref{thm:trace}}.
\end{align}
Applying \eqref{eq:gltw-generic} to the last expression, we get \Cref{thm:parking-q}.

\section{Markov Traces and Kirkman Numbers}\label{sec:markov}

\subsection{}

In this section, we prove \Cref{thm:markov} and \Cref{cor:kirkman}.
Along the way, we review Markov traces, the HOMFLYPT polynomial, and rational Kirkman polynomials.
Unless otherwise specified, $W = S_n$ and $S = \{s_1, \ldots, s_{n - 1}\}$, as in \S\ref{subsec:lascoux}.
\subsection{Markov Traces and HOMFLYPT}

As explained in \cite{jones-v} (in a different normalization), there is a unique family of traces 
\begin{align} 
\mu_n : H_{S_n} \to \bb{Q}(\X^{1/2})[a^{\pm 1}]
\end{align} 
satisfying these conditions:
\begin{enumerate}
\item 	$\mu_1(1) = 1$.

\item 	For all $\beta \in H_{S_{n - 1}}$, we have
		\begin{align} 
		\mu_{n + 1}(\beta T_{s_n}^{\pm 1}) = (-a^{-1}\X^{1/2})^{\pm 1}\, \mu_n(\beta).
		\end{align} 
		In particular, $\mu_{n + 1}(\beta) = \dfrac{a - a^{-1}}{\X^{1/2} - \X^{-1/2}}\, \mu_n(\beta)$, due to the quadratic relation on $T_{s_n}$.

\end{enumerate}
These traces give rise to an isotopy invariant of (tame) topological links.

Namely:
Any topological braid on $n$ strands $\beta$ defines an element of $H_{S_n}$, which we again denote by $\beta$, via the map from the braid group to $H_{S_n}$ that sends the $i$th positive simple twist $\sigma_i$ to the element $\X^{-1/2} T_{s_i}$.
For instance, if $\vec{s} = (s_{i_1}, \ldots, s_{i_\ell})$, then this map sends the positive braid $\sigma_{i_1} \cdots \sigma_{i_\ell}$ to the element $\X^{-\ell} T_{\vec{s}}$.
At the same time, closing up $\beta$ by wrapping it around a solid torus, then embedding it into $3$-space, defines a link $\hat{\beta}$ up to isotopy, called the \dfemph{closure} of $\beta$.
Ocneanu showed that if $e(\beta) \in \bb{Z}$ is the \dfemph{writhe} of $\beta$, meaning its length with respect to positive simple twists, then 
\begin{align}
\bb{P}(\hat{\beta}) \vcentcolon= (-a)^{e(\beta)} \mu_n(\beta) \in \bb{Q}(\X^{1/2})[a^{\pm 1}]
\end{align}
only depends on $\hat{\beta}$.

The Laurent polynomial $\bb{P}(\hat{\beta})$ is now called its \dfemph{reduced HOMFLYPT polynomial}, after its discoverers.
(The ``O'' stands for Ocneanu; the adjective ``reduced'' means that the normalization satisfies $\bb{P}(\text{unknot}) = 1$.)
The traces $\mu_n$ are called \dfemph{Markov traces}, as condition (2) in their definition corresponds to the so-called second Markov move on braids.
For further details, see \cite{jones-v}.

In \cite{gomi}, Y.\ Gomi introduced a uniform generalization of the traces $\mu_n$ to finite Coxeter groups $W$.
In \cite{ww}, Webster--Williamson gave a construction of Gomi's traces from weight filtrations on the cohomology of mixed sheaves.
Building on their work, the main result of \cite{trinh} relates a categorification of Gomi's traces to a Springer action of $W$ on the weight-filtered, $\bb{G}$-equivariant cohomology of the Steinberg varieties $\bb{Z}_\emptyset^-(\vec{s}) = \bb{Z}_\emptyset^+(\vec{s})$.

\subsection{Individual $a$-Degrees}

Induction on $|e(\beta)|$ shows that if $\beta \in H_{S_n}$ arises from a topological braid, then the only exponents of $a$ that can occur in $\mu(\beta)$ are
\begin{align} 
-n + 1,\quad
-n + 3,\quad 
\ldots,\quad 
n - 1.
\end{align} 
For $0 \leq k \leq n - 1$, we define $\mu_n^{(k)} : H_{S_n} \to \bb{Q}(\X^{1/2})$ by
\begin{align} 
\mu_n^{(k)}(\beta) = \text{$\bb{Q}(\X^{1/2})$-coefficient of $a^{-n + 1 + 2k}$ in $\mu_n(\beta)$}.
\end{align} 
By linearity, this is still a trace.

When $G$ is (split) semisimple of type $A_{n - 1}$, the formula for categorified traces in \cite{trinh} decategorifies to a formula relating $\mu_n^{(k)}$ to $\tau_G$.
To state it, let $e_{\wedge^k} \in \bb{Q}S_n$ be the symmetrizer for the $k$th exterior power of the reflection representation $\sf{V} \simeq \sf{V}^\ast$.
For any finite, irreducible Coxeter group $W$ of rank $r$, such elements $e_{\wedge^k} \in \bb{Q}W$ may be defined for $0 \leq k \leq r$ through the formal identity
\begin{align}\label{eq:exterior}
\frac{1}{|W|}
	\sum_{w \in W} \det(1 - t w \mid \sf{V})w
	= \sum_{k = 0}^r
		{(-t)^k} e_{\wedge^k}.
\end{align}
Note that $e_{\wedge^0} = e_{S, +}$ and $e_{\wedge^{n - 1}} = e_{S, -}$, in the notation of \S\ref{subsec:symmetrizer}.
For $G$ (split) semisimple of type $A_{n - 1}$, we have:
\begin{align}\label{eq:markov-to-tau-g}
\mu_n^{(k)} = (\X - 1)^{n - 1}\,
	\tau_G(e_{\wedge^k} \otimes \whitearg).
\end{align}
The proof amounts to plugging $z = e_{\wedge^k}$ into \eqref{eq:tau-g-to-exotic}, then rearranging terms using \eqref{eq:exterior} to arrive at the character-theoretic formula for $\mu_n^{(k)}$ in \cite[\S{4.3}]{gomi}.

Meanwhile, in \cite{bt}, Bezrukavnikov--Tolmachov gave a formula that (in our normalization) relates $\mu_n^{(k)}$ to $\mu_n^{(n - 1)}$.
To state it, we need the \dfemph{multiplicative Jucys--Murphy elements} $\JM_k \in H_{S_n}$ defined by
\begin{align}
\JM_k = \X^{-k} T_{s_k \cdots s_2s_1} T_{s_1s_2 \cdots s_k}
	\quad\text{for $1 \leq k \leq n$}.
\end{align}
Let $e_i(X_1, \ldots, X_{n - 1})$ be the elementary symmetic polynomial of degree $i$ in variables $X_1, \ldots, X_{n - 1}$.
Then \cite[Cor.\ 6.1.2]{bt} is the identity
\begin{align}\label{eq:bt-original}
\mu_n^{(k)}(\beta)
	= \mu_n^{(n - 1)}(\beta e_{n - 1 - k}(\JM_1, \ldots, \JM_{n - 1})).
\end{align}
It turns out that $\mu_n^{(n - 1)}$ is precisely the trace denoted $\tau$ in \S\ref{subsec:trace-intro}, as one can also deduce from \eqref{eq:markov-to-tau-g} and \Cref{thm:trace}.

\begin{rem}
Note that the variable $a$ in \cite[\S{6}]{bt} is our variable $-a^{-2}$, up to an overall grading shift.
Hence their $\ur{Tr}_n^{(k)}$ is our $\mu_n^{(n - 1 - k)}$, \emph{etc.}
\end{rem}

\begin{rem}
Jucys--Murphy elements were originally defined in the context of the group rings $\bb{Z}S_n$.
One can show \cite[(3)]{io} that
\begin{align}
	\frac{\JM_k - 1}{\X^{1/2} - \X^{-1/2}}
	= \sum_{i = 1}^k
	\X^{i - k}
	T_{s_i \cdots s_{k - 1}} T_{s_k} T_{s_{k - 1} \cdots s_i}.
\end{align}
At $\X \to 1$, the right-hand side specializes to the $k$th classical Jucys--Murphy element in $\bb{Z}S_n$.
These elements generate a maximal commutative subalgebra of $\bb{Z}S_n$.
Similarly, the $\JM_k$ generate a maximal commutative subalgebra of $H_{S_n}$ \cite[Prop.\ 1]{io}.
\end{rem}

\subsection{Jucys--Murphy Products}\label{subsec:jm}

Recall that $\Asc(v)$ and $\Des(v)$ respectively denote the left ascent and descent sets of $v$.
From \eqref{eq:bt-original}, we reduce \Cref{thm:markov} to:

\begin{thm}\label[thm]{thm:jm}
For all $k$, we have 
\begin{align} 
e_{n - 1 - k}(\JM_1, \ldots, \JM_{n - 1})
	= \sum_{\Des(v) = I_k} \X^{-\ell(v)} T_{v^{-1}} T_{v\vphantom{^{-1}}},
\end{align} 
where $I_k = \{s_1, \ldots, s_{n - 1 - k}\} \subseteq S$.
\end{thm}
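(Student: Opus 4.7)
The plan is to prove \Cref{thm:jm} by induction on $n$, exploiting the fact that the Jucys--Murphy elements pairwise commute to get the recursion
\[
e_j(\JM_1, \ldots, \JM_{n-1}) = e_j(\JM_1, \ldots, \JM_{n-2}) + \JM_{n-1} \cdot e_{j-1}(\JM_1, \ldots, \JM_{n-2}),
\]
where $j = n-1-k$.  The base case $n = 1$ is immediate.

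The cornerstone of the argument is a \emph{commutation lemma}:  setting $w = s_1 s_2 \cdots s_{n-1} \in S_n$, so that $\ell(w) = n-1$ and $\JM_{n-1} = \X^{-(n-1)} T_{w^{-1}} T_w$, the element $\JM_{n-1}$ commutes with every element of $H_{S_{n-1}} \subseteq H_{S_n}$.  I would establish this by checking, for $1 \le i \le n-2$, the Hecke-algebra identities $T_{s_i} T_{w^{-1}} = T_{w^{-1}} T_{s_{i+1}}$ and $T_{s_{i+1}} T_w = T_w T_{s_i}$.  Each reduces to the conjugation relation $w s_i = s_{i+1} w$ in $S_n$, together with length-additivity $\ell(s_i w^{-1}) = \ell(w^{-1} s_{i+1}) = n$ (a short inversion count), so that no quadratic Hecke relation is triggered.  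Concatenating them yields $T_{s_i} \JM_{n-1} = \JM_{n-1} T_{s_i}$ for all $i \le n-2$, as needed.

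Next, I would parametrize the set of $v \in S_n$ with $\Des(v) = I_k = \{s_1, \ldots, s_j\}$:  the descent/ascent pattern forces $v(1) = j+1$, with $j+1, j+2, \ldots, n$ occupying an increasing sequence of positions starting at $1$, and $1, 2, \ldots, j$ filling the remaining positions in decreasing order.  I would split the right-hand sum according to whether $v \in S_{n-1}$.  When $v \in S_{n-1}$ (equivalently, $v(n) = n$), the left descent set is unchanged within $S_{n-1}$, and the inductive hypothesis gives the contribution $e_j(\JM_1, \ldots, \JM_{n-2})$.  When $v \notin S_{n-1}$, the combinatorics force $v(n) = 1$, and $v$ admits a unique factorization $v = w \cdot v'$ with $v' \in S_{n-1}$ defined by $v'(i) = v(i) - 1$ for $i < n$, such that $\ell(v) = (n-1) + \ell(v')$ and $\Des(v') = \{s_1, \ldots, s_{j-1}\}$.

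For the second case, length-additivity converts this into $T_{v^{-1}} T_v = T_{v'^{-1}} T_{w^{-1}} T_w T_{v'} = \X^{n-1} T_{v'^{-1}} \JM_{n-1} T_{v'}$, whence
\[
\X^{-\ell(v)} T_{v^{-1}} T_v = \X^{-\ell(v')} T_{v'^{-1}} \JM_{n-1} T_{v'}.
\]
Applying the commutation lemma to move $\JM_{n-1}$ past $T_{v'^{-1}} \in H_{S_{n-1}}$ and then summing over such $v'$, the inductive hypothesis for $e_{j-1}$ yields $\JM_{n-1} \cdot e_{j-1}(\JM_1, \ldots, \JM_{n-2})$.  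Combining both cases closes the induction.

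The main technical obstacle is the commutation lemma:  while the braid identity $w s_i = s_{i+1} w$ in $S_n$ is elementary, the subtle point is verifying that the relevant Hecke products stay length-additive---this is the content of the inversion count, which is precisely what licenses the two sides of $T_{s_i} T_{w^{-1}} = T_{w^{-1}} T_{s_{i+1}}$ to collapse to $T_{s_i w^{-1}}$ without incurring quadratic-relation corrections.
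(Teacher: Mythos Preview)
Your proof is correct. The induction, the dichotomy $v(n)\in\{1,n\}$, the factorization $v=wv'$ with length-additivity, and the commutation of $\JM_{n-1}$ with $H_{S_{n-1}}$ all check out.

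The paper takes a closely related but non-inductive route. It defines, for each subset $I=\{i_1<\cdots<i_j\}\subseteq\{1,\ldots,n-1\}$, the element $c(I)=\prod_i^\downarrow(s_1\cdots s_i)\in S_n$ and shows directly (via the same braid relations $T_{s_k}T_{s_i\cdots s_1}=T_{s_i\cdots s_1}T_{s_{k+1}}$ that underlie your commutation lemma) that the monomial $\JM_{i_j}\cdots\JM_{i_1}$ equals $\X^{-\ell(c(I))}T_{c(I)^{-1}}T_{c(I)}$. A separate combinatorial lemma then identifies $\{c(I):|I|=j\}$ with $\{v:\Des(v)=\{s_1,\ldots,s_j\}\}$. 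Your induction is exactly the recursive unwinding of this bijection: your factorization $v=wv'$ is the paper's $c(I)=(s_1\cdots s_{n-1})\cdot c(I\setminus\{n-1\})$, and your commutation step is one pass of the braid-relation rearrangement the paper performs all at once. The paper's approach makes the monomial-to-term bijection explicit from the start, which is useful elsewhere in the paper; your inductive packaging is cleaner to verify but hides that bijection until one traces through the recursion.
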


\begin{ex}
Taking $k = 0$ above, we get
\begin{align} 
	\JM_1 \cdots \JM_{n - 1} = \X^{-\ell_S} T_{w_\circ}^2.
\end{align} 
Through this identity, \eqref{eq:bt-original} implies that the ``lowest'' and ``highest'' $a$-degrees of $\mu_n$ are related by the \dfemph{full twist} $\Delta^2 \vcentcolon= \X^{-\ell_S} T_{w_\circ}^2$: explicitly,
\begin{align}
	\mu_n^{(0)}(\beta) = \mu_n^{(n - 1)}(\beta \Delta^2),
\end{align}
an identity originally discovered by K\'alm\'an \cite{kalman}.
Compare to \Cref{rem:stz}.
\end{ex}

The proof of \Cref{thm:jm} amounts to \Crefrange{lem:jm-to-c}{lem:c-to-des} below.
As preparation, for any subset $I = \{s_{i_1}, \ldots, s_{i_j}\} \subseteq S$, let
\begin{align}
	\begin{array}{r@{\:}l@{\:}l@{\quad}l}
\JM(I) &= \JM_{i_1, \ldots, i_j}
	&\vcentcolon= \textstyle\prod_i^\downarrow
			\JM_i &\in H_{S_n},\\
c(I) &= c_{i_1, \ldots, i_j}
	&\vcentcolon= \textstyle\prod_i^\downarrow
			{(s_1 \cdots s_i)} &\in S_n,
\end{array}
\end{align}
where the notation $\prod_i^\downarrow$ means the product over $i_1, \ldots, i_j$ in decreasing order.

\begin{lem}\label[lem]{lem:jm-to-c}
For any subset $I \subseteq S$, we have 
\begin{align} 
\JM(I) = \X^{-\ell(c(I))} T_{c(I)^{-1}} T_{c(I)\vphantom{^{-1}}}.
\end{align} 
\end{lem}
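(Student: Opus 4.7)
The plan is to induct on $|I|$. The base case $|I| = 1$ is immediate from the definitions of $\JM_i$ and $c_i = s_1 \cdots s_i$. For the inductive step, let $i_1 = \max I$, put $I' = I \setminus \{i_1\}$, and abbreviate $w_{i_1} \vcentcolon= s_1 s_2 \cdots s_{i_1}$, so that $\JM_{i_1} = \X^{-i_1} T_{w_{i_1}^{-1}} T_{w_{i_1}}$ and $c(I) = w_{i_1} \cdot c(I')$. Applying the inductive hypothesis to $I'$ yields
\begin{equation*}
\JM(I) = \JM_{i_1} \cdot \JM(I')
= \X^{-i_1 - \ell(c(I'))}\, T_{w_{i_1}^{-1}} T_{w_{i_1}} T_{c(I')^{-1}} T_{c(I')}.
\end{equation*}

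Two further ingredients will identify this with $\X^{-\ell(c(I))} T_{c(I)^{-1}} T_{c(I)}$. First, I check length-additivity. Set $W^{(1)} \vcentcolon= \langle s_1, \ldots, s_{i_1 - 1}\rangle$. Since each $i_k$ for $k \geq 2$ satisfies $i_k \leq i_1 - 1$, the product $c(I') = w_{i_2} \cdots w_{i_j}$ lies in $W^{(1)}$. Meanwhile $w_{i_1}$ is the long cycle $(1, 2, \ldots, i_1 + 1)$ in one-line notation, so $w_{i_1}(j) = j + 1 < j + 2 = w_{i_1}(j + 1)$ for every $j < i_1$; this exhibits $w_{i_1}$ as the minimum-length representative of the left coset $w_{i_1} W^{(1)}$. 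Consequently $\ell(c(I)) = i_1 + \ell(c(I'))$, giving the length-additive factorizations $T_{c(I)} = T_{w_{i_1}} T_{c(I')}$ and $T_{c(I)^{-1}} = T_{c(I')^{-1}} T_{w_{i_1}^{-1}}$.

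Second, I will use the commutation $\JM_{i_1} T_w = T_w \JM_{i_1}$ for every $w \in W^{(1)}$. Topologically this is transparent: $\JM_{i_1}$ represents a full wrap of the $(i_1 + 1)$-st strand around the first $i_1$ strands, which commutes with any braid supported on those strands. Algebraically it can be established by an auxiliary induction; the crux is the case $w = s_{i_1 - 1}$, which reduces, after canceling commuting factors, to the braid relation $T_{s_{i_1 - 1}} T_{s_{i_1}} T_{s_{i_1 - 1}} = T_{s_{i_1}} T_{s_{i_1 - 1}} T_{s_{i_1}}$. Granting this, I slide $T_{c(I')^{-1}}$ past $\JM_{i_1}$ to obtain
\begin{equation*}
\JM(I)
= \X^{-\ell(c(I))}\, T_{c(I')^{-1}} T_{w_{i_1}^{-1}} T_{w_{i_1}} T_{c(I')}
= \X^{-\ell(c(I))}\, T_{c(I)^{-1}} T_{c(I)},
\end{equation*}
where the last step invokes the length-additive factorizations. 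The main obstacle is the commutation claim: although standard for multiplicative Jucys--Murphy elements, its proof demands care because $\JM_{i_1}$ already contains the factor $T_{s_{i_1}}$, so its commutation with $T_{s_{i_1 - 1}}$ cannot be read off length considerations and must instead be extracted from the braid relation together with the quadratic relation of $H_{S_n}$.
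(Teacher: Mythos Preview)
Your proof is correct and follows essentially the same route as the paper: both arguments reorganize the product $\JM(I)$ into the form $T_{c(I)^{-1}}T_{c(I)}$ via braid-type commutation relations. The paper works directly with the index-shift identities $T_{s_k}T_{s_i\cdots s_1}=T_{s_i\cdots s_1}T_{s_{k+1}}$ (and the mirror version) for $k<i$, sliding the halves $T_{s_1\cdots s_{i_k}}$ of each $\JM_{i_k}$ through the larger factors all at once; you instead induct on $|I|$ and package the same relations as the standard fact that $\JM_{i_1}$ commutes with $H_{\langle s_1,\ldots,s_{i_1-1}\rangle}$. Your inductive framing is perhaps more conceptual (it isolates the well-known Jucys--Murphy commutativity as the single black box), while the paper's version makes the braid manipulation completely explicit; but the underlying algebra is the same, and in particular your ``auxiliary induction'' for the commutation claim is exactly what the paper's two displayed relations provide.
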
 

\begin{proof}
Let $i_1 < i_2 < \cdots < i_j$ be the elements of $I$.
For any $i, k$ with $1 \leq k < i \leq n - 1$, we have the relations
\begin{align}
T_{s_k} T_{s_i \cdots s_2s_1}
	= T_{s_i \cdots s_2s_1} T_{s_{k+ 1}}
	\quad\text{and}\quad
T_{s_k} T_{s_1s_2 \cdots s_i}
	= T_{s_1s_2 \cdots s_i} T_{s_{k - 1}},
\end{align}
as one can check from braid diagrams.
Using these relations, we can move the prefixes $T_{s_1s_2 \cdots s_i}$ in each factor $\JM_{i_k}$ of $\JM(I)$ from right to left, through each of $\JM_{i_{k + 1}}$, \ldots, $\JM_{i_j}$, giving the result.
\end{proof}

\begin{ex}
In what follows, we write $T_i$ in place of $T_{s_i}$ for convenience.
When $n = 4$ and $|I| = 2$, we have
\begin{align}
\JM_{1, 2}
	&= \X^{-3}
		T_2 T_1^2 T_2 \cdot T_1^2,
	&c_{1, 2}
		&= s_1s_2 \cdot s_1,\\
\JM_{1, 3}
	&= \X^{-4}
		T_3 T_2 T_1^2 T_2 T_3 \cdot T_1^2,
	&c_{1, 3}
		&= s_1s_2s_3 \cdot s_1,\\
\JM_{2, 3}
	&= \X^{-5}
		T_3 T_2 T_1^2 T_2 T_3 \cdot T_2 T_1^2 T_2,
		&c_{2, 3}
		&= s_1s_2s_3 \cdot s_1s_2,
\end{align}
\Cref{lem:jm-to-c} says that
\begin{align}
\JM_{1, 2}
	&= \X^{-3}
		T_1T_2T_1 \cdot T_1T_2T_1,\\
\JM_{1, 3}
	&= \X^{-4}
		T_1 \cdot T_3T_2T_1 \cdot T_1T_2T_3 \cdot T_1,\\
\JM_{2, 3}
	&= \X^{-5}
		T_2T_1 \cdot T_3T_2T_1 \cdot T_1T_2T_3 \cdot T_1T_2.
\end{align}
\end{ex} 

\begin{lem}\label[lem]{lem:c-to-des}
For $1 \leq j \leq n$. we have
\begin{align}
\{c(I) \mid |I| = j-1\}
	= \{v \in S_n \mid \Des(v) = \{s_1, \ldots, s_{j-1}\}\}.
\end{align}
\end{lem}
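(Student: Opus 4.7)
The plan is to exhibit the bijection $I \leftrightarrow c(I)$ by computing the one-line notation of $c(I)^{-1}$ explicitly on its first $j$ positions, then using a length argument to force the remaining positions to appear in increasing order. Set $\gamma_i \vcentcolon= s_1 s_2 \cdots s_i$, which acts as the cycle $(1, 2, \ldots, i+1)$ fixing $i+2, \ldots, n$; then $c(I)^{-1} = \gamma_{i_1}^{-1} \gamma_{i_2}^{-1} \cdots \gamma_{i_{j-1}}^{-1}$, with $\gamma_{i_{j-1}}^{-1}$ acting innermost.

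First I would verify, by iterative tracking, that for $1 \leq k \leq j-1$ we have $c(I)^{-1}(k) = i_{j-k} + 1$, while $c(I)^{-1}(j) = 1$. The key input is the trivial inequality $i_m \geq m$: it ensures the innermost cycles shift the argument down by $1$ at each step until the value $1$ is reached, after which the next applicable cycle $\gamma_{i_m}^{-1}$ sends $1 \mapsto i_m + 1$, and all remaining cycles fix this value since the subsequent $i_\ell$ are strictly smaller.

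Next I would count inversions of $c(I)^{-1}$. The first $j$ entries $i_{j-1}+1 > i_{j-2}+1 > \cdots > i_1+1 > 1$ form the set $\{1\} \cup (I + 1)$ in strictly decreasing order, contributing $\binom{j}{2}$ internal inversions. For each $i_k+1$ sitting at position $j-k$, the number of values among the last $n-j$ positions strictly less than $i_k+1$ equals $|\{2, \ldots, i_k\} \setminus \{i_1+1, \ldots, i_{k-1}+1\}| = i_k - k$. Summing, the first $j$ positions alone contribute $\binom{j}{2} + \sum_{k=1}^{j-1}(i_k - k) = \sum_{k=1}^{j-1} i_k$ inversions. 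But $\ell(c(I)) \leq \sum_k \ell(\gamma_{i_k}) = \sum_k i_k$ by subadditivity of length, so equality must hold and the last $n-j$ positions of $c(I)^{-1}$ host no further inversions.

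Hence $c(I)^{-1}$ has strictly increasing entries in positions $j+1, \ldots, n$, and together with $c(I)^{-1}(j) = 1 < c(I)^{-1}(j+1)$, its right descent set is exactly $\{s_1, \ldots, s_{j-1}\}$; equivalently, $\Des(c(I)) = \{s_1, \ldots, s_{j-1}\}$. Injectivity of $I \mapsto c(I)$ is immediate, since $I + 1 = \{c(I)^{-1}(k) : 1 \leq k \leq j - 1\}$. Both sides of the claimed equality have size $\binom{n-1}{j-1}$---on the right, elements are parametrized by the $(j-1)$-subset of $\{2, \ldots, n\}$ giving the top $j-1$ values of $v^{-1}$ in its first $j$ positions, with $1$ forced to appear at position $j$---so we obtain a bijection. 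The main bookkeeping hurdle will be the inversion count between the first and last blocks; beyond that, everything follows from the length bound.
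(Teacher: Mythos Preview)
Your argument is correct. The computation of $c(I)^{-1}(k)$ for $1\le k\le j$ is accurate, and the inversion-counting trick combined with subadditivity of length is a clean way to force the last $n-j$ entries into increasing order without computing them explicitly. The injectivity and counting steps are fine.

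Your route is genuinely different from the paper's. The paper argues Coxeter-theoretically: it observes that $\Des(v)\supseteq J$ forces $w_{J\circ}$ to be a left prefix of $v$, and that $\Des(v)\subseteq J$ then forces the complementary factor $v'$ to lie in $W^{J',-}$ for $J'=S\setminus\{s_j\}$; it then identifies the elements $c(I)$ with this description and finishes by a cardinality match. Your approach instead stays entirely inside one-line notation and uses the length bound $\ell(c(I))\le\sum_k i_k$ as the structural input. Your proof is more elementary and self-contained, and as a by-product yields the explicit formula $c(I)^{-1}(k)=i_{j-k}+1$ together with a direct verification that the defining word for $c(I)$ is reduced. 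The paper's approach, by contrast, situates the lemma inside the general framework of parabolic coset decompositions, which makes the connection to the surrounding material (relative norms, $W^{J,\pm}$) more transparent.
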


\begin{proof}
Let $J = \{s_1, \ldots, s_{j-1}\}$ and $J' = S \setminus \{s_{j}\}$ in what follows.
We claim that any element $v \in S_n$ with $\Des(v) = J$ must take the form $w_{J\circ}v'$, where $v'$ is a minimal-length right coset representative of $W_{J'} \simeq S_j \times S_{n - j}$.
Indeed, $\Des(v) \supseteq J$ forces $w_{J\circ}$ to be a left factor of $v$, and if $v = w_{J\circ}v'$, then the reverse inclusion $\Des(v) \subseteq J$ forces the condition on $v'$.

Note that there are exactly $\binom{n}{j}$ elements of the form $w_{J\circ}v'$ with $v' \in W^{J', -}$.  These are exactly the elements $c(I)$ with $|I| = j$.
This calculation shows that $\prod_i^\downarrow {(s_1 \cdots s_i)}$ is reduced and that $\Des(c(I)) = J$.
As there are $\binom{n}{j}$ choices for $I$ such that $|I| = j$, the corresponding elements $c(I)$ exhaust the elements $v$ such that $\Des(v) = J$.
\end{proof}

\begin{cor}\label[cor]{cor:markov}
	For any word $\vec{s}$ in $S = \{s_1, \ldots, s_{n - 1}\}$, we have
	\begin{align}
		\mu_n^{(k)}(T_{\vec{s}})
		= \frac{1}{(\X - 1)^{n - 1}}
		\sum_{\Asc(v) = I_k}
		\sum_{\vec{\omega} \in \cal{D}^{(v)}(\vec{s})}
		\X^{|\mathbf{d}_{\vec{\omega}}|}
		(\X - 1)^{|\mathbf{e}_{\vec{\omega}}|}.
	\end{align}
\end{cor}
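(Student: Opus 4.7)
The plan is to chain together three ingredients already developed in the paper: Theorem~\ref{thm:markov} to rewrite $\mu_n^{(k)}$ as an application of the trace $\tau[\zeta_{I_k}^+]$, the formula \eqref{eq:gltw-generic} to expand each summand of $\tau[\zeta_{I_k}^+](T_{\vec{s}})$ as a sum over distinguished subwords, and a final relabeling $v \mapsto vw_\circ$ to convert the descent condition $\Des(v) = I_k$ into the ascent condition $\Asc(v) = I_k$.

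Concretely, I would start from the identity
\begin{align}
	\mu_n^{(k)}(T_{\vec{s}})
	= \frac{1}{(\X-1)^{n-1}}\,\tau[\zeta_{I_k}^+](T_{\vec{s}})
	= \frac{1}{(\X-1)^{n-1}}\,\tau\!\left(T_{\vec{s}}\,\zeta_{I_k}^+\right),
\end{align}
provided by Theorem~\ref{thm:markov}. Expanding $\zeta_{I_k}^+ = \sum_{\Des(v) = I_k} \X^{-\ell(v)} T_{v^{-1}} T_{v}$ and applying \eqref{eq:gltw-generic} term by term gives
\begin{align}
	\tau\!\left(T_{\vec{s}}\,\zeta_{I_k}^+\right)
	= \sum_{\Des(v) = I_k}
		\X^{-\ell(v)}\,\tau(T_{\vec{s}} T_{v^{-1}} T_v)
	= \sum_{\Des(v) = I_k}
		\sum_{\vec{\omega} \in \mathcal{D}^{(vw_\circ)}(\vec{s})}
		\X^{|\mathbf{d}_{\vec{\omega}}|}(\X-1)^{|\mathbf{e}_{\vec{\omega}}|}.
\end{align}

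To finish, I substitute $v \mapsto vw_\circ$, which is a bijection of $W$ onto itself. The key Coxeter-theoretic fact I would invoke is that for any $s \in S$, the identity $\ell(svw_\circ) = \ell(w_\circ) - \ell(sv)$ combined with $\ell(vw_\circ) = \ell(w_\circ) - \ell(v)$ yields
\begin{align}
	s \in \Des(vw_\circ) \iff \ell(sv) > \ell(v) \iff s \in \Asc(v),
\end{align}
so that $\Des(vw_\circ) = \Asc(v)$. After this reindexing, $\mathcal{D}^{(vw_\circ \cdot w_\circ)}(\vec{s}) = \mathcal{D}^{(v)}(\vec{s})$, and the sum takes the desired form.

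Honestly, there is no serious obstacle: every ingredient is cited directly from the paper, so this is essentially a two-line derivation once the descent/ascent swap is noted. The only subtlety to be careful about is the direction of the $w_\circ$ substitution --- one needs to check that the map $v \mapsto vw_\circ$ is the correct bijection (rather than $v \mapsto w_\circ v$ or $v \mapsto v^{-1}w_\circ$), and confirm that applying it to the index set $\mathcal{D}^{(vw_\circ)}(\vec{s})$ matches the exponent of $v$ in $T_{v^{-1}} T_v$ coming from $\zeta_{I_k}^+$. Both checks are immediate from the definitions in \S\ref{subsec:cell-intro} and \S\ref{subsec:markov-intro}.
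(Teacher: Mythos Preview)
Your proposal is correct and follows essentially the same route as the paper. The only cosmetic difference is that the paper cites \eqref{eq:bt-original} and \Cref{thm:jm} separately rather than their packaged form \Cref{thm:markov}, then performs the same $v \mapsto vw_\circ$ reindexing to convert $\Des(v) = I_k$ into $\Asc(v) = I_k$.
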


\begin{proof}
	Combine \eqref{eq:bt-original}, \Cref{thm:jm}, and \eqref{eq:gltw-generic} to arrive at a double sum over $v$ such that $\Des(v) = I_k$ and over $\vec{\omega}$ in $\cal{D}^{(vw_\circ)}(\vec{s})$.
	Then note that $\ell(sv) < \ell(v)$ if and only if $\ell(svw_\circ) > \ell(svw_\circ)$.
\end{proof}

\subsection{Kirkman Numbers}

For any finite, irreducible Coxeter group $W$ of rank $r$ and Coxeter number $h$, and integer $p > 0$ coprime to $h$, we define the \dfemph{rational Kirkman polynomials} of $(W, p)$ to be
\begin{align}
	\Kirk_{W, p}^{(k)}(\X)
	= \frac{\det(1 - \X^p e_{\wedge^k} \mid \sf{V}_p^\ast)}{\det(1 - \X e_{\wedge^k} \mid \sf{V}^\ast)}
	\quad\text{for $0 \leq k \leq r$}.
\end{align}
Equivalently, by \eqref{eq:exterior},
\begin{align}
	\sum_{k = 0}^r t^k\, \Kirk_{W, p}^{(k)}(\X)
	= \frac{1}{|W|} \sum_{w \in W} 
	\frac{\det(1 + tw \mid \sf{V}^\ast) \det(1 - \X^p w \mid \sf{V}_p^\ast)}{\det(1 - \X w \mid \sf{V}^\ast)}.
\end{align}
When $p = h + 1$, this definition recovers the \dfemph{Kirkman polynomials} of $W$ introduced in \cite[\S{9.2}]{arr}.

We define the \dfemph{rational Kirkman numbers} of $(W, p)$ by $\Kirk_{W, p}^{(k)} \vcentcolon= \Kirk_{W, p}^{(k)}(1)$.
For $W = S_n$ and $p = n + 1$, they recover the $f$-vectors of the usual associahedron \cite{arw}.
For general $W$ and $p = h + 1$, they recover the $f$-vectors of the $W$-associahedron in \cite{fr}, as noted in \cite[\S{3.3}]{arr}.
We discuss this further in \Cref{sec:associahedron}.

In the $\X \to 1$ limit, the following identity implies \Cref{cor:kirkman} about the rational Kirkman numbers of $S_n$.
\Cref{fig:a3p3} at the end of the paper illustrates \Cref{cor:parking} and \Cref{cor:kirkman} simultaneously.

\begin{thm}\label[thm]{thm:kirkman}
	Take $W = S_n$ and $S = \{s_1, \ldots, s_{n - 1}\}$.
	Then for any Coxeter word $\vec{c}$, integer $p > 0$ coprime to $n$, and integer $k$, we have
	\begin{align}
		\Kirk_{S_n, p}^{(k)}(\X)
		= \frac{1}{(\X - 1)^{n - 1}}
		\sum_{\Asc(v) = I_k}
		\sum_{\vec{\omega} \in \cal{D}^{(v)}(\vec{c}^p)}
		\X^{|\sf{d}_{\vec{\omega}}|} 
		(\X - 1)^{|\sf{e}_{\vec{\omega}}|}.
	\end{align}
\end{thm}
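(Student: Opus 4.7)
The plan is to mirror the derivation of \Cref{thm:parking-q} given in \S\ref{subsec:trace-to-cell}, substituting the hook-type idempotent $e_{\wedge^k}$ for the parabolic (anti)\-symmetrizer $e_{J, \pm}$, and invoking \Cref{cor:markov} in place of the Hoefsmit--Scott identity \Cref{thm:trace} together with \eqref{eq:gltw-generic}. Since $p$ is coprime to the Coxeter number $n$ of $S_n$, $\sf{V}_p \simeq \sf{V}$, so all the $\X$-rational expressions below are well-defined.

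Concretely, I would assemble the following chain of identities. First, specializing $w$ to the hook idempotent $e_{\wedge^k}$ in the BGG-type formula \eqref{eq:rca-det} identifies the Kirkman polynomial with a graded character value on the rational parking space:
\begin{align}
\Kirk_{S_n, p}^{(k)}(\X)
\;=\; \frac{\det(1 - \X^p e_{\wedge^k} \mid \sf{V}_p^\ast)}{\det(1 - \X e_{\wedge^k} \mid \sf{V}^\ast)}
\;=\; \sum_i \X^i \tr(e_{\wedge^k} \mid \sf{L}_{p/h}^i).
\end{align}
Next, taking $G$ split semisimple of type $A_{n - 1}$ and applying the character formula \eqref{eq:tau-g-to-rca} from \cite{trinh} yields
\begin{align}
\sum_i \X^i \tr(e_{\wedge^k} \mid \sf{L}_{p/h}^i)
\;=\; \tau_G(e_{\wedge^k} \otimes T_{\vec{c}}^p).
\end{align}
Then \eqref{eq:markov-to-tau-g} converts this pairing of $\tau_G$ against the hook idempotent into the appropriate coefficient of the Ocneanu trace, namely $\mu_n^{(k)}(T_{\vec{c}^p})$, up to the universal factor $(\X - 1)^{n - 1} = (\X - 1)^{\ur{rk}(G)}$.

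Finally, I would apply \Cref{cor:markov} with $\vec{s} = \vec{c}^p$ to rewrite $\mu_n^{(k)}(T_{\vec{c}^p})$ as a double sum over $v \in S_n$ with $\Asc(v) = I_k$ and $\vec{\omega} \in \cal{D}^{(v)}(\vec{c}^p)$, weighted by $\X^{|\sf{d}_{\vec{\omega}}|}(\X - 1)^{|\sf{e}_{\vec{\omega}}|}$. After combining the resulting factors of $\X - 1$, this is precisely the claimed formula for $\Kirk_{S_n, p}^{(k)}(\X)$.

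The main work has already been absorbed into \Cref{cor:markov}, which itself rests on the Bezrukavnikov--Tolmachov Jucys--Murphy expansion \eqref{eq:bt-original}, the combinatorial identity \Cref{thm:jm}, and the $\X$-deformed Deodhar formula \eqref{eq:gltw-generic}. Given these inputs, the only delicate point is the bookkeeping of $\X$-factors across the three passages---from Kirkman polynomial to $\sf{L}_{p/h}$, from $\sf{L}_{p/h}$ to $\tau_G$, and from $\tau_G$ to $\mu_n^{(k)}$---so that the exponent of $\X - 1$ matches in the end. No genuinely new ingredient beyond \Cref{cor:markov} is required; the proof is the Markov/hook analogue of the parabolic computation in \S\ref{subsec:trace-to-cell}, and in particular bears the same relationship to \Cref{thm:parking-q} as \Cref{cor:kirkman} does to \Cref{cor:parking}.
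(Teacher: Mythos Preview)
Your proposal is correct and follows essentially the same route as the paper. Both arguments pass from $\Kirk_{S_n,p}^{(k)}(\X)$ to $\tau_G(e_{\wedge^k}\otimes T_{\vec c}^{\,p})$ via \eqref{eq:rca-det} and \eqref{eq:tau-g-to-rca}, then reach the Deodhar double sum; the only cosmetic difference is that you quote \Cref{cor:markov} directly, whereas the paper instead cites \Cref{thm:markov} to reach $\tau[\zeta_{I_k}^+](T_{\vec c}^{\,p})$, expands $\zeta_{I_k}^+$ as a sum over $\Des(v)=I_k$, applies \eqref{eq:gltw-generic}, and then converts descents to ascents exactly as in the proof of \Cref{cor:markov}.
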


\begin{proof}
	Pick any split semisimple $G$ of type $A_{n - 1}$.
	Then
	\begin{align}
		\Kirk_{S_n, p}^{(k)}(\X)
		&= \tau_G(e_{\wedge^k} \otimes T_{\vec{c}}^p)
		&&\text{by \eqref{eq:rca-det} and \eqref{eq:tau-g-to-rca}}\\
		&= \frac{1}{(\X - 1)^{n - 1}}\, \tau[\zeta_{I_k}^+](T_{\vec{c}}^p)
		&&\text{by \Cref{thm:markov}}\\
		&= \frac{1}{(\X - 1)^{n - 1}}
		\sum_{\Des(v) = I_k}
		\X^{-\ell(v)}\,
		\tau(T_{\vec{c}}^p T_{v^{-1}} T_{v\vphantom{^{-1}}}).
	\end{align}
	Apply \eqref{eq:gltw-generic} to the sum over $v$ such that $\Des(v) = I_k$.
	Then conclude as in the proof of \Cref{cor:markov}.
\end{proof}

\subsection{Other Types?}\label{subsec:other-types}

It is natural to seek generalizations of \Cref{cor:markov} and \Cref{thm:kirkman} to other $W$.
So far, we have been unable to find such a construction.
This may be related to the absence of uniform formulas for Kirkman polynomials in general.
Attractive formulas do exist for \dfemph{coincidental types}, where the degrees of $W$ form an arithmetic sequence \cite[\S{10}]{rss}.

In recent work \cite{tz}, Tolmachov--Zhylinskyi generalize the multiplicative Jucys--Murphy formula in \cite[Cor.\ 6.1.2]{bt} to types $BC$ and $D$.
It would be interesting to extend \Cref{thm:jm} to these types.
Note that type $BC$ is always coincidental, whereas type $D_n$ is not coincidental for $n \geq 4$.

\appendix
\section{Faces of the Associahedron} \label[appendix]{sec:associahedron}

We keep the notation of~\Cref{sec:markov}.
\Cref{cor:kirkman} shows that for any coprime integers $n, p > 0$, Coxeter word $\vec{c}$ of $S_n$, and integer $k$, the collection of parabolic parking objects
\begin{align}\label{eq:kirkman-family}
\coprod_{\Asc(v) = I_k}
\cal{M}^{(v)}(\vec{c}^p),
\quad\text{where $I_k = \{s_1, \ldots, s_{n - 1 - k}\}$},
\end{align}
may be viewed as a $\vec{c}$-noncrossing set for the rational Kirkman number $\Kirk_{S_n, p}^{(k)}$.
Below, in the case where $p = n + 1$, we relate these objects to the classical noncrossing combinatorics of associahedra.

For any irreducible, finite Coxeter system $(W, S)$, let $T \subseteq W$ be the set of all reflections.
Let $\leq_T$ denote the absolute order on $W$.
For any Coxeter word $\vec{c}$ representing a Coxeter element $c \in W$, let $\vec{w}_\circ(c)$ be the \dfemph{$c$-sorting word} for the longest element $w_\circ = w_{S\circ}$: that is, the first subword of $\vec{c}^p$, in lexicographical order, that is a reduced word for $w_\circ$.

Let $\Asso(W, c)$ be the \dfemph{associahedron} of~\cite{reading} and~\cite{hlt}.
Following~\cite{cls,ps}, we may view it as the simplicial complex whose faces are the subwords of $\vec{c} \vec{w}_\circ(c)$ for which the complement contains a reduced word for $w_\circ$.
The vertex set of this complex is in bijection with the set of \dfemph{$c$-noncrossing partitions}
\begin{align} 
\NC(W, c) \vcentcolon= \{ \pi \in W \mid \pi \leq_T c \}.
\end{align}
We will identify the vertices with the $c$-noncrossing partitions themselves.

For each $\pi \in \NC(W, c)$, let $W_\pi$ be the (not-necessarily-standard) parabolic subgroup of $W$ generated by the reflections $t \in T$ such that $t \leq_T \pi$.
Let $W^\pi$ be the set of minimal left coset representatives for $W_\pi$.
The \dfemph{$c$-noncrossing parking functions} of $W$ are the cosets $vW_\pi$ as we run over all $\pi \in \NC(W, c)$ and $v \in W^\pi$.

\begin{ex}
Take $W = S_3$ and $c = st$, where $s = s_1$ and $t = s_2$.
Then $\NC(W, c) = \{e, s, t, sts, c\}$.
The 16 noncrossing parking functions are
\begin{align}
&eW_e, sW_e, tW_e, stW_e, tsW_e, stsW_e,\\
&eW_s, tW_s, stW_s, \quad eW_t, sW_t, tsW_t, \quad eW_{sts}, sW_{sts}, tW_{sts},\\
&eW_c,
\end{align}
when written using minimal coset representatives.
\end{ex}

Recall that the edges out of a given vertex $\pi \in \NC(W, c)$ are indexed by the reflections in the canonical factorization of the Kreweras complement $c\pi^{-1}$.
So the largest dimension among faces of $\Asso(W, c)$ with minimal vertex $\pi$ is $r - \ell_T(\pi)$, where $\ell_T$ denotes absolute length.
More generally, the number of $k$-faces of $\Asso(W, c)$ with minimal vertex $\pi$ is $\binom{r - \ell_T(\pi)}{k}$.

The following result shows how to construct a set of this size using $c$-noncrossing parking functions, when $W = S_n$.
Note that here, $r = n - 1$.

\begin{prop}\label[prop]{prop:noncrossing}
For all $\pi \in \NC(S_n, c)$, we have
\begin{align}
	|W^\pi \cap {\sf A}_k| = \binom{n - 1 - \ell_T(\pi)}{k},
	\quad\text{where ${\sf A}_k \vcentcolon= \{w \in W \mid \Asc(w) = I_k\}$}.
\end{align} 
We therefore have a bijection from the set of $k$-faces of $\Asso(S_n, c)$ with minimal vertex $\pi$ to the set $S_n^\pi \cap {\sf A}_k$.
\end{prop}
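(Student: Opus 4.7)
The plan is a purely combinatorial enumeration after translating both sides into explicit descriptions in one-line notation. First I would record the standard identity $s_i \in \Des(w) \iff w^{-1}(i) > w^{-1}(i+1)$ and use it to describe $\sf{A}_k$: the condition $\Asc(w) = I_k$ forces the letters $1, 2, \ldots, n-k$ to appear left-to-right in that order, the letters $n-k, n-k+1, \ldots, n$ to appear left-to-right in the reverse order, and (reconciling the two chains at $n-k$) the letter $n-k$ to occupy position $n$. Next I would describe $W^\pi$: since $W_\pi$ is exactly the subgroup of permutations preserving each block of the underlying set partition $\{B_1, \ldots, B_m\}$ of $\pi$, with $m = n - \ell_T(\pi)$, the left coset $wW_\pi$ is determined by the sets of letters $w(B_j) = \{w(i) : i \in B_j\}$ (taking $w(i)$ to mean the letter at position $i$), and its minimum-length representative is the one with $w|_{B_j}$ increasing for every $j$.

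The main step will be the enumeration. Take $w \in W^\pi \cap \sf{A}_k$. Since the letters within each block are increasing but the ``large'' letters $\{n-k+1, \ldots, n\}$ decrease globally, no block can contain two large letters. Let $B_{m_0}$ denote the block containing position $n$: because $n-k$ occupies the largest position of $B_{m_0}$, that block also contains no large letter. The $k$ large letters therefore distribute one per block among the remaining $m-1$ blocks, in $\binom{m-1}{k}$ ways. Everything else is then forced: in each chosen block the large letter must sit at the block's last position; the specific values $n, n-1, \ldots, n-k+1$ are paired with the chosen blocks in increasing order of last position (to realize the global decreasing order of large letters); and the small letters $\{1, \ldots, n-k-1\}$ fill the remaining positions in global increasing order, which is automatically compatible with the within-block condition.

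This bijection between $W^\pi \cap \sf{A}_k$ and the $k$-subsets of the $m-1$ non-$B_{m_0}$ blocks will give $|W^\pi \cap \sf{A}_k| = \binom{m-1}{k} = \binom{n-1-\ell_T(\pi)}{k}$, matching the count of $k$-faces of $\Asso(S_n, c)$ with minimal vertex $\pi$. The main obstacle is simply identifying $W^\pi$ as the ``increasing-within-each-block'' condition for the non-standard parabolic $W_\pi$; once that and the zigzag structure of $\sf{A}_k$ are in hand, the counting is essentially forced.
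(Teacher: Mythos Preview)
Your argument is correct and is essentially the same as the paper's: both identify $W^\pi \cap \sf{A}_k$ with the $k$-subsets of the $m-1$ blocks not containing position $n$, placing the large letters $n, n-1, \ldots, n-k+1$ at the maximal positions of the chosen blocks and the small letters elsewhere in increasing order. The only cosmetic difference is that the paper packages the resulting permutations as explicit products $w_{b_{j_1}} \cdots w_{b_{j_k}}$ with $w_j = s_r s_{r-1} \cdots s_j$ and $b_i$ the maximal element of the $i$th block, whereas you work directly in one-line notation.
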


\begin{proof} 
Without loss of generality, take $\vec{c} = (s_1, s_2, \ldots, s_r)$.
Write $w_j = s_r s_{r - 1} \cdots s_j$, so that $\sf{A}_k$ consists of the $\binom{r}{k}$ elements of the form $w_{j_1}w_{j_2} \cdots w_{j_i}$ as we run over $k$-subsets $\{ j_1 < j_2 < \cdots < j_k \}$ of $\{1, \ldots, r\}$.
(See~\Cref{ex:asc} below.)

In one-line notation, these permutations are obtained by filling in the numbers $1, 2, \ldots, n - k$ from left to right but skipping the entries in positions $j_1, \ldots, j_k$, then filling in the skipped entries from right to left with the numbers $n - k + 1, \ldots, n$.
Since $1, \ldots, n - k$ appear in order but $n - k + 1, \ldots, n$ do not, the left ascent set of such a permutation is exactly $I_k$.

For our choice of $c$, we can picture each $c$-noncrossing partition $\pi$ as a (classical) noncrossing partition of the set $[n] \vcentcolon= \{1, \ldots, n\}$.
We can picture a $c$-noncrossing parking function of the form $vW_\pi$ as a way to decorate each block of $\pi$ by a block of the same size in an arbitrary set partition of $[n]$.
It remains to show that there are $\binom{r - \ell_T(\pi)}{k}$ elements $v$ in $\sf{A}_k$ such that the letters in each block of $\pi$ appear in increasing order in the one-line notation of $v$, as this will imply that $v$ is a minimal left coset representative for $W_\pi$.

Observe that $n - \ell_T(\pi) = r - \ell_T(\pi) + 1$ is the number of blocks of $\pi$.
List the blocks as $B_1, B_2, \ldots, B_{n - \ell_T(\pi)}$, in increasing order of their largest elements.
Write these largest elements in order as $b_1, b_2, \ldots, b_{n - \ell_T(\pi)}$, so that $b_{n - \ell_T(\pi)} = n$.
Then the desired elements $v \in \sf{A}_k$ are the elements $w_{b_{j_1}}w_{b_{j_2}} \cdots w_{b_{j_k}}$ as we run over subsets $\{j_1 < j_2 < \cdots < j_k\}$ of $\{1, \ldots, r - \ell_T(\pi)\}$.
(Again, see \Cref{ex:asc}.)
\end{proof}

\begin{ex}\label[ex]{ex:asc}
Taking $W = S_3$ gives
\begin{align}
	\sf{A}_0 = \{e\},\quad 
	\sf{A}_1 = \{w_1, w_2\},\quad 
	\sf{A}_2 = \{w_1w_2\},
\end{align}
where $w_1 = s_2s_1$ and $w_2 = s_2$.
Taking $W = S_4$ gives
\begin{align}
	\sf{A}_0 = \{e\},\quad 
	\sf{A}_1 = \{w_1, w_2, w_3\},\quad
	\sf{A}_2 = \{w_1w_2, w_1w_3, w_2w_3\},\quad 
	\sf{A}_3 = \{w_1w_2w_3\},
\end{align}
where $w_1 = s_3s_2s_1$ and $w_2 = s_3s_2$ and $w_3 = s_3$.

Note that the elements $w_j$ are analogous to the elements $c(S \setminus \{s_j\})$ in \Cref{sec:markov}, but with $\Asc$ in place of $\Des$.
\end{ex}

\begin{cor}
For any integer $k$, we have a bijection from the set of $k$-faces of $\Asso(S_n, c)$  to the set of $c$-noncrossing parking functions $vW_\pi$ with $v \in S_n^\pi \cap {\sf A}_k$.
\end{cor}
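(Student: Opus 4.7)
The plan is to reduce the statement to \Cref{prop:noncrossing} by decomposing both sides according to the minimal vertex in absolute order.

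First, I would recall that the associahedron $\Asso(S_n, c)$ has the structure of a simplicial complex whose vertices are the $c$-noncrossing partitions $\NC(S_n, c)$, and that every nonempty face $F$ has a unique vertex $\pi_F$ that is minimal with respect to the absolute order $\leq_T$; indeed, the edges out of any vertex $\pi$ point only to elements covering $\pi$ in $\leq_T$ (they correspond to the reflections in the canonical factorization of the Kreweras complement $c\pi^{-1}$), so every face generated at $\pi$ contains $\pi$ itself as its unique $\leq_T$-minimum. Hence the set of $k$-faces decomposes as a disjoint union
\begin{align}
\{k\text{-faces of } \Asso(S_n,c)\}
 \;=\; \coprod_{\pi \in \NC(S_n, c)} \{k\text{-faces with minimal vertex } \pi\}.
\end{align}

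Next, I would recall from the definition just above \Cref{prop:noncrossing} that the $c$-noncrossing parking functions are precisely the cosets $vW_\pi$ for $\pi \in \NC(S_n, c)$ and $v \in S_n^\pi$, and that this parametrization is unique because $S_n^\pi$ is a system of distinguished left coset representatives. Hence the target set also decomposes as
\begin{align}
\{vW_\pi : v \in S_n^\pi \cap \sf{A}_k\}
 \;=\; \coprod_{\pi \in \NC(S_n, c)} \{vW_\pi : v \in S_n^\pi \cap \sf{A}_k\}.
\end{align}

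Finally, I would apply \Cref{prop:noncrossing}, which for each fixed $\pi \in \NC(S_n, c)$ furnishes a bijection from the $k$-faces with minimal vertex $\pi$ to $S_n^\pi \cap \sf{A}_k$. Assembling these bijections over $\pi$ yields the desired global bijection. The only point that requires any care, and thus the main (mild) obstacle, is the unique-minimal-vertex statement for faces of $\Asso(S_n,c)$; this is standard in the subword-complex description of~\cite{cls,ps}, so I would either quote it or verify it directly from the fact that joining a vertex $\pi$ to any neighbor corresponds to multiplying by a reflection that increases absolute length.
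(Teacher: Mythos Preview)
Your proposal is correct and follows essentially the same route the paper intends: the paper states this corollary without a separate proof because it is meant to be read as the disjoint union over $\pi \in \NC(S_n,c)$ of the bijections supplied by \Cref{prop:noncrossing}, exactly as you do. One small caveat: your justification that ``edges out of $\pi$ point only to elements covering $\pi$ in $\leq_T$'' is not literally what the paper says---it only asserts that the edges are \emph{indexed} by the reflections in the factorization of $c\pi^{-1}$, and the decomposition of faces by minimal vertex is quoted as a known structural fact rather than derived from a covering property; since you already flag this as the point requiring a citation, this does not affect the validity of your argument.
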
 

In \cite{gltw}, for any $W$ and $v \in W$, we gave a bijection from the set of $c$-noncrossing parking functions $\{vW_\pi \mid \pi \in \NC(W, c)\}$ to $\cal{M}^{(v)}(\vec{c}^{h + 1})$.
We deduce:

\begin{cor}
For any integer $k$, we have a bijection from the set of $k$-faces of $\Asso(S_n, c)$  to the set in \eqref{eq:kirkman-family}.
\end{cor}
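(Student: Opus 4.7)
The plan is to obtain the bijection by composing the two bijections already recorded in the excerpt. The corollary immediately preceding the target identifies the set of $k$-faces of $\Asso(S_n, c)$ with the set of $c$-noncrossing parking functions $vW_\pi$ for which $\pi \in \NC(S_n, c)$ and $v \in S_n^\pi \cap \sf{A}_k$, where $\sf{A}_k = \{w \in S_n \mid \Asc(w) = I_k\}$. Grouping by the minimal left coset representative $v$, this presents the set of $k$-faces as the disjoint union, over $v \in \sf{A}_k$, of the $c$-noncrossing parking functions whose minimal representative is $v$.

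Next, I would invoke the bijection from \cite{gltw} recalled just above the statement: for each $v \in W$, it sends the set of $c$-noncrossing parking functions of the form $vW_\pi$ with $v$ as minimal representative bijectively onto $\cal{M}^{(v)}(\vec{c}^{h+1})$. Since $W = S_n$ has Coxeter number $h = n$, this target is $\cal{M}^{(v)}(\vec{c}^{n+1})$, matching the parameter $p = n+1$ appearing in \eqref{eq:kirkman-family}. Taking the disjoint union of these bijections over $v \in \sf{A}_k$ and composing with the previous corollary then yields the desired bijection from $k$-faces of $\Asso(S_n, c)$ to $\coprod_{\Asc(v) = I_k} \cal{M}^{(v)}(\vec{c}^{n+1})$.

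The argument needs no new geometric or combinatorial input beyond these two prior bijections, so the proof is essentially a one-line composition; the only point meriting care is matching conventions, namely verifying that the constraint $v \in S_n^\pi$ in the preceding corollary coincides with the condition that $v$ be the minimal left coset representative in the \cite{gltw} parametrization, which is immediate from the definition of $W^\pi$. I do not anticipate any real obstacle.
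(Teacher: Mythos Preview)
Your proposal is correct and matches the paper's approach exactly: the paper states the corollary immediately after recalling the bijection from \cite{gltw}, with the words ``We deduce,'' indicating precisely the composition you describe---combining the preceding corollary with the per-$v$ bijection from \cite{gltw} over all $v \in \sf{A}_k$.
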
 


\bibliographystyle{alphaurl}
\bibliography{norms}

@misc {achllw,
	AUTHOR = {Asplund, Johan and Capovilla-{S}earle, Orsola and Hughes, James and Leverson, Caitlin and Li, Wenyuan and Wu, Angela},
	TITLE = {Decompositions of augmentation varieties via weaves and rulings}, 
	YEAR = {2025},
	EPRINT = {2508.20226},
	EPRINTTYPE = {arXiv},
	NOTE = {v1},
}

@article {alw,
	AUTHOR = {Armstrong, Drew and Loehr, Nicholas A. and Warrington, Gregory
	S.},
	TITLE = {Rational parking functions and {C}atalan numbers},
	JOURNAL = {Ann. Comb.},
	FJOURNAL = {Annals of Combinatorics},
	VOLUME = {20},
	YEAR = {2016},
	NUMBER = {1},
	PAGES = {21--58},
	ISSN = {0218-0006,0219-3094},
	MRCLASS = {05E10 (05A30 05E05 05E18)},
	MRNUMBER = {3461934},
	MRREVIEWER = {Thomas\ Ernst},
	DOI = {10.1007/s00026-015-0293-6},
}

@article {arr,
	AUTHOR = {Armstrong, Drew and Reiner, Victor and Rhoades, Brendon},
	TITLE = {Parking spaces},
	JOURNAL = {Adv. Math.},
	FJOURNAL = {Advances in Mathematics},
	VOLUME = {269},
	YEAR = {2015},
	PAGES = {647--706},
	ISSN = {0001-8708,1090-2082},
	MRCLASS = {20F55 (05Exx 06A07)},
	MRNUMBER = {3281144},
	MRREVIEWER = {Sa\'ul\ A.\ Blanco},
	DOI = {10.1016/j.aim.2014.10.012},
}

@article {arw,
	AUTHOR = {Armstrong, Drew and Rhoades, Brendon and Williams, Nathan},
	TITLE = {Rational associahedra and noncrossing partitions},
	JOURNAL = {Electron. J. Combin.},
	FJOURNAL = {Electronic Journal of Combinatorics},
	VOLUME = {20},
	YEAR = {2013},
	NUMBER = {3},
	PAGES = {Paper 54, 27},
	ISSN = {1077-8926},
	MRCLASS = {05E45 (05A19)},
	MRNUMBER = {3118962},
	MRREVIEWER = {Ragnar\ Freij},
	DOI = {10.37236/3432},
}

@article {br,
	AUTHOR = {Bessis, David and Reiner, Victor},
	TITLE = {Cyclic sieving of noncrossing partitions for complex
	reflection groups},
	JOURNAL = {Ann. Comb.},
	FJOURNAL = {Annals of Combinatorics},
	VOLUME = {15},
	YEAR = {2011},
	NUMBER = {2},
	PAGES = {197--222},
	ISSN = {0218-0006,0219-3094},
	MRCLASS = {20F55 (05A18 05E15)},
	MRNUMBER = {2813511},
	MRREVIEWER = {T.\ Kyle\ Petersen},
	DOI = {10.1007/s00026-011-0090-9},
}

@article {bt,
	AUTHOR = {Bezrukavnikov, Roman and Tolmachov, Kostiantyn},
	TITLE = {Monodromic model for {K}hovanov-{R}ozansky homology},
	JOURNAL = {J. Reine Angew. Math.},
	FJOURNAL = {Journal f\"ur die Reine und Angewandte Mathematik. [Crelle's
	Journal]},
	VOLUME = {787},
	YEAR = {2022},
	PAGES = {79--124},
	ISSN = {0075-4102,1435-5345},
	MRCLASS = {14F08 (14L40 20F55 20G15)},
	MRNUMBER = {4431909},
	MRREVIEWER = {H.\ H.\ Andersen},
	DOI = {10.1515/crelle-2022-0008},
}

@incollection {bm,
	AUTHOR = {Borho, Walter and MacPherson, Robert},
	TITLE = {Partial resolutions of nilpotent varieties},
	BOOKTITLE = {Analysis and topology on singular spaces, {II}, {III}
	({L}uminy, 1981)},
	SERIES = {Ast\'erisque},
	VOLUME = {101-102},
	PAGES = {23--74},
	PUBLISHER = {Soc. Math. France, Paris},
	YEAR = {1983},
	MRCLASS = {14L30 (20C30 32C40)},
	MRNUMBER = {737927},
	MRREVIEWER = {Naohisa\ Shimomura},
}

@article {bmr,
	AUTHOR = {Brou\'{e}, M. and Malle, G. and Rouquier, R.},
	TITLE = {Complex reflection groups, braid groups, {H}ecke algebras},
	JOURNAL = {J. Reine Angew. Math.},
	FJOURNAL = {Journal f\"{u}r die Reine und Angewandte Mathematik. [Crelle's Journal]},
	VOLUME = {500},
	YEAR = {1998},
	PAGES = {127--190},
	ISSN = {0075-4102},
	MRCLASS = {20F55 (20C05 20F36)},
	MRNUMBER = {1637497},
	MRREVIEWER = {Arjeh M. Cohen},
}

@book {carter,
	AUTHOR = {Carter, Roger W.},
	TITLE = {Finite groups of {L}ie type},
	SERIES = {Wiley Classics Library},
	NOTE = {Conjugacy classes and complex characters,
	Reprint of the 1985 original,
	A Wiley-Interscience Publication},
	PUBLISHER = {John Wiley \& Sons, Ltd., Chichester},
	YEAR = {1993},
	PAGES = {xii+544},
	ISBN = {0-471-94109-3},
	MRCLASS = {20C33 (20-02 20G40)},
	MRNUMBER = {1266626},
}

@incollection {carter_95,
	AUTHOR = {Carter, R. W.},
	TITLE = {On the representation theory of the finite groups of {L}ie	type over an algebraically closed field of characteristic 0},
	BOOKTITLE = {Algebra, {IX}},
	SERIES = {Encyclopaedia Math. Sci.},
	VOLUME = {77},
	PAGES = {1--120, 235--239},
	PUBLISHER = {Springer, Berlin},
	YEAR = {1995},
	ISBN = {3-540-57038-1},
	MRCLASS = {20C33 (20-02 20G05)},
	MRNUMBER = {1392478},
	DOI = {10.1007/978-3-662-03235-0_1},
}

@article {cls,
    AUTHOR = {Ceballos, Cesar and Labb\'e, Jean-Philippe and Stump,
              Christian},
     TITLE = {Subword complexes, cluster complexes, and generalized
              multi-associahedra},
   JOURNAL = {J. Algebraic Combin.},
  FJOURNAL = {Journal of Algebraic Combinatorics. An International Journal},
    VOLUME = {39},
      YEAR = {2014},
    NUMBER = {1},
     PAGES = {17--51},
      ISSN = {0925-9899,1572-9192},
   MRCLASS = {05E45 (52B05)},
  MRNUMBER = {3144391},
MRREVIEWER = {Micha\l\ Adamaszek},
       DOI = {10.1007/s10801-013-0437-x},
       URL = {https://doi.org/10.1007/s10801-013-0437-x},
}

@article {deodhar,
	AUTHOR = {Deodhar, Vinay V.},
	TITLE = {On some geometric aspects of {B}ruhat orderings. {I}. {A}
	finer decomposition of {B}ruhat cells},
	JOURNAL = {Invent. Math.},
	FJOURNAL = {Inventiones Mathematicae},
	VOLUME = {79},
	YEAR = {1985},
	NUMBER = {3},
	PAGES = {499--511},
	ISSN = {0020-9910,1432-1297},
	MRCLASS = {20G15 (20H15)},
	MRNUMBER = {782232},
	MRREVIEWER = {James\ E.\ Humphreys},
	DOI = {10.1007/BF01388520},
}

@misc{dudas,
	AUTHOR = {Dudas, Olivier},
	TITLE = {Note on the {D}eodhar decomposition of a double {S}chubert cell},
	YEAR = {2008},
	EPRINT = {0807.2198},
	EPRINTTYPE = {arXiv},
	NOTE = {v1},
}

@article {fr,
	AUTHOR = {Fomin, Sergey and Reading, Nathan},
	TITLE = {Generalized cluster complexes and {C}oxeter combinatorics},
	JOURNAL = {Int. Math. Res. Not.},
	FJOURNAL = {International Mathematics Research Notices},
	YEAR = {2005},
	NUMBER = {44},
	PAGES = {2709--2757},
	ISSN = {1073-7928,1687-0247},
	MRCLASS = {05E15},
	MRNUMBER = {2181310},
	MRREVIEWER = {Axel\ Hultman},
	DOI = {10.1155/IMRN.2005.2709},
	URL = {https://doi.org/10.1155/IMRN.2005.2709},
}

@article {gl,
	AUTHOR = {Galashin, Pavel and Lam, Thomas},
	TITLE = {Positroids, knots, and {$q,t$}-{C}atalan numbers},
	JOURNAL = {Duke Math. J.},
	FJOURNAL = {Duke Mathematical Journal},
	VOLUME = {173},
	YEAR = {2024},
	NUMBER = {11},
	PAGES = {2117--2195},
	ISSN = {0012-7094,1547-7398},
	MRCLASS = {14M15 (05A15 05B35 13F60 14F08 57K18)},
	MRNUMBER = {4809331},
	DOI = {10.1215/00127094-2023-0049},
}

@article {gltw,
	AUTHOR = {Galashin, Pavel and Lam, Thomas and Trinh, Minh-T\^am and
	Williams, Nathan},
	TITLE = {Rational noncrossing {C}oxeter-{C}atalan combinatorics},
	JOURNAL = {Proc. Lond. Math. Soc. (3)},
	FJOURNAL = {Proceedings of the London Mathematical Society. Third Series},
	VOLUME = {129},
	YEAR = {2024},
	NUMBER = {4},
	PAGES = {Paper No. e12643, 50},
	ISSN = {0024-6115,1460-244X},
	MRCLASS = {05A15 (05E05 05E10 20C08)},
	MRNUMBER = {4814843},
	DOI = {10.1112/plms.12643},
}

@book {gp,
	AUTHOR = {Geck, Meinolf and Pfeiffer, G\"otz},
	TITLE = {Characters of finite {C}oxeter groups and {I}wahori-{H}ecke
	algebras},
	SERIES = {London Mathematical Society Monographs. New Series},
	VOLUME = {21},
	PUBLISHER = {The Clarendon Press, Oxford University Press, New York},
	YEAR = {2000},
	PAGES = {xvi+446},
	ISBN = {0-19-850250-8},
	MRCLASS = {20C15 (20C08 20F55)},
	MRNUMBER = {1778802},
	MRREVIEWER = {Jian-yi\ Shi},
}

@incollection {ginzburg,
	AUTHOR = {Ginsburg, Victor},
	TITLE = {Admissible modules on a symmetric space},
	NOTE = {Orbites unipotentes et repr\'esentations, III},
	JOURNAL = {Ast\'erisque},
	FJOURNAL = {Ast\'erisque},
	NUMBER = {173-174},
	YEAR = {1989},
	PAGES = {9--10, 199--255},
	ISSN = {0303-1179,2492-5926},
	MRCLASS = {22E46 (20G05 22E47)},
	MRNUMBER = {1021512},
	MRREVIEWER = {Toshiyuki\ Tanisaki},
}

@article {gomi,
	AUTHOR = {Gomi, Yasushi},
	TITLE = {The {M}arkov traces and the {F}ourier transforms},
	JOURNAL = {J. Algebra},
	FJOURNAL = {Journal of Algebra},
	VOLUME = {303},
	YEAR = {2006},
	NUMBER = {2},
	PAGES = {566--591},
	ISSN = {0021-8693,1090-266X},
	MRCLASS = {20C08 (20C15 20F55)},
	MRNUMBER = {2255123},
	MRREVIEWER = {John\ C.\ Murray},
	DOI = {10.1016/j.jalgebra.2005.09.034},
	URL = {https://doi.org/10.1016/j.jalgebra.2005.09.034},
}

@book {grojnowski,
	AUTHOR = {Grojnowski, Ian},
	TITLE = {Character sheaves on symmetric spaces},
	NOTE = {Thesis (Ph.D.)--Massachusetts Institute of Technology},
	PUBLISHER = {ProQuest LLC, Ann Arbor, MI},
	YEAR = {1992},
	PAGES = {(no paging)},
	MRCLASS = {99-05},
	MRNUMBER = {2716378},
	URL =
	{https://www.proquest.com/docview/304017771},
}

@misc {gu,
	AUTHOR = {Gu, Andrew},
	TITLE = {On {LU} matrices and {S}pringer theory}, 
	YEAR = {2021},
	URL={https://math.mit.edu/research/highschool/primes/materials/2021/Gu.pdf},
}

@article{hlt,
	author = {Hohlweg, Christophe and Lange, Carsten E. M. C. and Thomas, Hugh},
	title = {Permutahedra and generalized associahedra.},
	fjournal = {Advances in Mathematics},
	journal = {Adv. Math.},
	issn = {0001-8708},
	volume = {226},
	number = {1},
	pages = {608--640},
	year = {2011},
	language = {English},
	doi = {10.1016/j.aim.2010.07.005},
	keywords = {20F55,52B11,05E15,06A07},
	zbMATH = {5825422},
	Zbl = {1233.20035}
}

@article {io,
	AUTHOR = {Isaev, A. P. and Ogievetsky, O. V.},
	TITLE = {On representations of {H}ecke algebras},
	JOURNAL = {Czechoslovak J. Phys.},
	FJOURNAL = {Czechoslovak Journal of Physics},
	VOLUME = {55},
	YEAR = {2005},
	NUMBER = {11},
	PAGES = {1433--1441},
	ISSN = {0011-4626,1572-9486},
	MRCLASS = {20C08},
	MRNUMBER = {2223832},
	MRREVIEWER = {Andrew\ R.\ Francis},
	DOI = {10.1007/s10582-006-0022-9},
}

@article {jones-l,
	AUTHOR = {Jones, Lenny K.},
	TITLE = {Centers of generic {H}ecke algebras},
	JOURNAL = {Trans. Amer. Math. Soc.},
	FJOURNAL = {Transactions of the American Mathematical Society},
	VOLUME = {317},
	YEAR = {1990},
	NUMBER = {1},
	PAGES = {361--392},
	ISSN = {0002-9947,1088-6850},
	MRCLASS = {20C30 (20G05 20G40 22E50)},
	MRNUMBER = {948191},
	MRREVIEWER = {Naohisa\ Shimomura},
	DOI = {10.2307/2001467},
}

@article {jones-v,
	AUTHOR = {Jones, V. F. R.},
	TITLE = {Hecke algebra representations of braid groups and link
	polynomials},
	JOURNAL = {Ann. of Math. (2)},
	FJOURNAL = {Annals of Mathematics. Second Series},
	VOLUME = {126},
	YEAR = {1987},
	NUMBER = {2},
	PAGES = {335--388},
	ISSN = {0003-486X,1939-8980},
	MRCLASS = {46L99 (20F36 22D25 46L35 46L55 57M25)},
	MRNUMBER = {908150},
	MRREVIEWER = {Pierre\ de la Harpe},
	DOI = {10.2307/1971403},
}

@article {kalman,
	AUTHOR = {K{\'a}lm{\'a}n, Tam{\'a}s},
	TITLE = {Meridian twisting of closed braids and the {H}omfly
	polynomial},
	JOURNAL = {Math. Proc. Cambridge Philos. Soc.},
	FJOURNAL = {Mathematical Proceedings of the Cambridge Philosophical
	Society},
	VOLUME = {146},
	YEAR = {2009},
	NUMBER = {3},
	PAGES = {649--660},
	ISSN = {0305-0041,1469-8064},
	MRCLASS = {57M27},
	MRNUMBER = {2496349},
	DOI = {10.1017/S0305004108002016},
}

@article {kawanaka,
	AUTHOR = {Kawanaka, Noriaki},
	TITLE = {Unipotent elements and characters of finite {C}hevalley
	groups},
	JOURNAL = {Osaka Math. J.},
	FJOURNAL = {Osaka Mathematical Journal},
	VOLUME = {12},
	YEAR = {1975},
	NUMBER = {2},
	PAGES = {523--554},
	ISSN = {0388-0699},
	MRCLASS = {20C15 (20G40)},
	MRNUMBER = {384914},
	MRREVIEWER = {James\ E.\ Humphreys},
	URL = {https://mqtrinh.github.io/math/resources/kawanaka_unipotent-elements-and-characters-of-finite-chevalley-groups_v-humphreys.pdf},
}

@misc {lascoux,
	AUTHOR = {Lascoux, Alain},
	TITLE = {The {H}ecke algebra and structure constants of the ring of symmetric polynomials},
	YEAR = {2006},
	EPRINT = {0602379},
	EPRINTTYPE = {arXiv},	
}

@article {lusztig_81,
	AUTHOR = {Lusztig, George},
	TITLE = {On a theorem of {B}enson and {C}urtis},
	JOURNAL = {J. Algebra},
	FJOURNAL = {Journal of Algebra},
	VOLUME = {71},
	YEAR = {1981},
	NUMBER = {2},
	PAGES = {490--498},
	ISSN = {0021-8693},
	MRCLASS = {20G05 (14L30)},
	MRNUMBER = {630610},
	MRREVIEWER = {S.\ I.\ Gel\cprime fand},
	DOI = {10.1016/0021-8693(81)90188-5},
}

@book {lusztig,
	AUTHOR = {Lusztig, George},
	TITLE = {Characters of reductive groups over a finite field},
	SERIES = {Annals of Mathematics Studies},
	VOLUME = {107},
	PUBLISHER = {Princeton University Press, Princeton, NJ},
	YEAR = {1984},
	PAGES = {xxi+384},
	ISBN = {0-691-08350-9; 0-691-08351-7},
	MRCLASS = {20G05 (14L20 20C15)},
	MRNUMBER = {742472},
	MRREVIEWER = {Bhama\ Srinivasan},
	DOI = {10.1515/9781400881772},
}

@article {lusztig_15,
	AUTHOR = {Lusztig, G.},
	TITLE = {Truncated convolution of character sheaves},
	JOURNAL = {Bull. Inst. Math. Acad. Sin. (N.S.)},
	FJOURNAL = {Bulletin of the Institute of Mathematics. Academia Sinica. New
	Series},
	VOLUME = {10},
	YEAR = {2015},
	NUMBER = {1},
	PAGES = {1--72},
	ISSN = {2304-7909,2304-7895},
	MRCLASS = {20G15 (14F05 20C08)},
	MRNUMBER = {3309291},
	MRREVIEWER = {Anne-Marie\ H.\ Aubert},
	URL =
	{https://www.math.sinica.edu.tw/bulletin/journals/1613},
}

@article{mellit,
	AUTHOR = {Mellit, Anton},
	TITLE = {Toric stratifications of character varieties},
	JOURNAL = {Inst. Hautes \'Etudes Sci. Publ. Math.},
	FJOURNAL={Publications math{\'e}matiques de l'IH{\'E}S},
	VOLUME = {142},
	YEAR = {2025},
	PAGES = {153--240},
	ISSN = {0073-8301,1618-1913},
	MRCLASS = {99-06},
	MRNUMBER = {4992851},
	DOI = {https://doi.org/10.1007/s10240-025-00158-0},
}

@article {os,
	AUTHOR = {Orlik, Peter and Solomon, Louis},
	TITLE = {Unitary reflection groups and cohomology},
	JOURNAL = {Invent. Math.},
	FJOURNAL = {Inventiones Mathematicae},
	VOLUME = {59},
	YEAR = {1980},
	NUMBER = {1},
	PAGES = {77--94},
	ISSN = {0020-9910,1432-1297},
	MRCLASS = {32C40 (57S17)},
	MRNUMBER = {575083},
	MRREVIEWER = {Peter\ Giblin},
	DOI = {10.1007/BF01390316},
}

@article {ps,
    AUTHOR = {Pilaud, Vincent and Stump, Christian},
     TITLE = {Brick polytopes of spherical subword complexes and generalized
              associahedra},
   JOURNAL = {Adv. Math.},
  FJOURNAL = {Advances in Mathematics},
    VOLUME = {276},
      YEAR = {2015},
     PAGES = {1--61},
      ISSN = {0001-8708,1090-2082},
   MRCLASS = {05E45 (05E15 05E30 20F55 52B12)},
  MRNUMBER = {3327085},
MRREVIEWER = {Jian-yi\ Shi},
       DOI = {10.1016/j.aim.2015.02.012},
       URL = {https://doi.org/10.1016/j.aim.2015.02.012},
}

@article{reading,
	author = {Reading, Nathan},
	title = {Cambrian lattices},
	fjournal = {Advances in Mathematics},
	journal = {Adv. Math.},
	issn = {0001-8708},
	volume = {205},
	number = {2},
	pages = {313--353},
	year = {2006},
	doi = {10.1016/j.aim.2005.07.010},
}

@article {rss,
	AUTHOR = {Reiner, Victor and Shepler, Anne V. and Sommers, Eric},
	TITLE = {Invariant theory for coincidental complex reflection groups},
	JOURNAL = {Math. Z.},
	FJOURNAL = {Mathematische Zeitschrift},
	VOLUME = {298},
	YEAR = {2021},
	NUMBER = {1-2},
	PAGES = {787--820},
	ISSN = {0025-5874,1432-1823},
	MRCLASS = {20F55 (05A30 13A50)},
	MRNUMBER = {4257109},
	MRREVIEWER = {O.\ V.\ Shvartsman},
	DOI = {10.1007/s00209-020-02592-8},
}

@article {stz,
	AUTHOR = {Shende, Vivek and Treumann, David and Zaslow, Eric},
	TITLE = {Legendrian knots and constructible sheaves},
	JOURNAL = {Invent. Math.},
	FJOURNAL = {Inventiones Mathematicae},
	VOLUME = {207},
	YEAR = {2017},
	NUMBER = {3},
	PAGES = {1031--1133},
	ISSN = {0020-9910,1432-1297},
	MRCLASS = {53D37 (57M27)},
	MRNUMBER = {3608288},
	MRREVIEWER = {Quach thi C\^am V\^an},
	DOI = {10.1007/s00222-016-0681-5},
}

@incollection {shoji,
	AUTHOR = {Shoji, Toshiaki},
	TITLE = {Geometry of orbits and {S}pringer correspondence},
	NOTE = {Orbites unipotentes et repr\'esentations, I},
	JOURNAL = {Ast\'erisque},
	FJOURNAL = {Ast\'erisque},
	NUMBER = {168},
	YEAR = {1988},
	PAGES = {9, 61--140},
	ISSN = {0303-1179,2492-5926},
	MRCLASS = {20G05 (22E47)},
	MRNUMBER = {1021493},
	MRREVIEWER = {Bhama\ Srinivasan},
}

@misc {tz,
	AUTHOR = {Tolmachov, Konstiantyn and Zhylinskyi, Heorhii},
	TITLE = {Generalized {M}arkov traces and {J}ucys--{M}urphy elements}, 
	YEAR = {2025},
	EPRINT = {2507.19896},
	EPRINTTYPE = {arXiv},
	NOTE = {v1},
}

@misc {trinh,
	AUTHOR = {Trinh, Minh-T\^am Quang},
	TITLE = {From the {H}ecke Category to the Unipotent Locus}, 
	YEAR = {2021},
	EPRINT = {2106.07444},
	EPRINTTYPE = {arXiv},
	NOTE = {v2},
}

@misc {trinh_duality,
	AUTHOR = {Trinh, Minh-T\^am Quang},
	TITLE = {Unipotent Elements and {K}\'alm\'an--{S}erre Duality}, 
	YEAR = {2022},
	EPRINT = {2210.09051},
	EPRINTTYPE = {arXiv},
	NOTE = {v2},
}

@article {ww,
	AUTHOR = {Wan, Jinkui and Wang, Weiqiang},
	TITLE = {Frobenius map for the centers of {H}ecke algebras},
	JOURNAL = {Trans. Amer. Math. Soc.},
	FJOURNAL = {Transactions of the American Mathematical Society},
	VOLUME = {367},
	YEAR = {2015},
	NUMBER = {8},
	PAGES = {5507--5520},
	ISSN = {0002-9947,1088-6850},
	MRCLASS = {20C08 (05E05)},
	MRNUMBER = {3347181},
	MRREVIEWER = {Selvaraj\ Chelliah},
	DOI = {10.1090/S0002-9947-2014-06211-9},
	URL = {https://doi.org/10.1090/S0002-9947-2014-06211-9},
}

@article {ww_markov,
	AUTHOR = {Webster, Ben and Williamson, Geordie},
	TITLE = {The geometry of {M}arkov traces},
	JOURNAL = {Duke Math. J.},
	FJOURNAL = {Duke Mathematical Journal},
	VOLUME = {160},
	YEAR = {2011},
	NUMBER = {2},
	PAGES = {401--419},
	ISSN = {0012-7094,1547-7398},
	MRCLASS = {20C08 (14F43)},
	MRNUMBER = {2852120},
	MRREVIEWER = {Anthony\ Henderson},
	DOI = {10.1215/00127094-1444268},
	URL = {https://doi.org/10.1215/00127094-1444268},
}
\mbox{}\\[-3ex]

\newcommand*{\captionfig}{\protect\scalebox{0.125}{\begin{tikzpicture}
\protect\fill (2,0) rectangle ++ (1,1); 
\protect\fill (1,1) rectangle ++ (1,1); 
\protect\fill (0,2) rectangle ++ (1,1); 
\protect\draw (0,0) grid (3,3);
\end{tikzpicture}}}

\begin{figure}[htb]
\scalebox{0.625}{
	\begin{tikzpicture}
		\node[draw] (0) at (0,0) {\scalebox{.8}{$27 \hspace{1em} J=\emptyset  \hspace{1em} 0$}};
		\node[draw] (3) at (-5,2.5) {\scalebox{.8}{
				\begin{tikzpicture}[anchor=south]
					\node (121a) at (0,1) {\scalebox{.5}{
							\begin{tikzpicture}
								\fill (2,0) rectangle ++ (1,1); 
								\fill (2,1) rectangle ++ (1,1); 
								\fill (2,2) rectangle ++ (1,1); 
								\draw (0,0) grid (3,3);
								\node (n) at (1.5,-1) {\scalebox{2}{$s_1s_2s_1$}};
					\end{tikzpicture}}};
					\node (txt) at (0,0) {$18 \hspace{1em} J=\{3\} \hspace{1em} 1$};
		\end{tikzpicture}}};
		\node[draw] (2) at (0,2.5) {\scalebox{.8}{
				\begin{tikzpicture}[anchor=south]
					\node (13a) at (-2.2,1) {\scalebox{.5}{
							\begin{tikzpicture}
								\fill (0,0) rectangle ++ (1,1); 
								\fill (2,1) rectangle ++ (1,1); 
								\fill (1,2) rectangle ++ (1,1); 
								\draw (0,0) grid (3,3);
								\node (n) at (1.5,-1) {\scalebox{2}{$s_1s_3$}};
					\end{tikzpicture}}};
					\node (13b) at (0,1) {\scalebox{.5}{
							\begin{tikzpicture}
								\fill (1,0) rectangle ++ (1,1); 
								\fill (0,1) rectangle ++ (1,1); 
								\fill (2,2) rectangle ++ (1,1); 
								\draw (0,0) grid (3,3);
								\node (n) at (1.5,-1) {\scalebox{2}{$s_1s_3$}};
					\end{tikzpicture}}};
					\node (132) at (2.2,1) {\scalebox{.5}{
							\begin{tikzpicture}
								\fill (2,0) rectangle ++ (1,1); 
								\fill (1,1) rectangle ++ (1,1); 
								\fill (0,2) rectangle ++ (1,1); 
								\draw (0,0) grid (3,3);
								\node (n) at (1.5,-1) {\scalebox{2}{$s_1s_3s_2$}};
					\end{tikzpicture}}};
					\node (txt) at (0,0) {$18 \hspace{1em} J=\{2\} \hspace{1em} 3$};
		\end{tikzpicture}}};
		\node[draw] (1) at (5,2.5) {\scalebox{.8}{
				\begin{tikzpicture}[anchor=south]
					\node (232a) at (0,1) {\scalebox{.5}{
							\begin{tikzpicture}
								\fill (0,0) rectangle ++ (1,1); 
								\fill (0,1) rectangle ++ (1,1); 
								\fill (0,2) rectangle ++ (1,1); 
								\draw (0,0) grid (3,3);
								\node (n) at (1.5,-1) {\scalebox{2}{$s_2s_3s_2$}};
					\end{tikzpicture}}};
					\node (txt) at (0,0) {$18 \hspace{1em} J=\{1\} \hspace{1em} 1$};
		\end{tikzpicture}}};
		\node[draw] (23) at (-7.5,7.5) {\scalebox{.8}{
				\begin{tikzpicture}[anchor=south]
					\node (1a) at (-2.2,1) {\scalebox{.5}{
							\begin{tikzpicture}
								\fill (0,0) rectangle ++ (1,1); 
								\fill (2,1) rectangle ++ (1,1); 
								\fill (1,2) rectangle ++ (1,1); 
								\draw (0,0) grid (3,3);
								\node (n) at (1.5,-1) {\scalebox{2}{$s_1$}};
					\end{tikzpicture}}};
					\node (1b) at (0,1) {\scalebox{.5}{
							\begin{tikzpicture}
								\fill (1,0) rectangle ++ (1,1); 
								\fill (0,1) rectangle ++ (1,1); 
								\fill (2,2) rectangle ++ (1,1); 
								\draw (0,0) grid (3,3);
								\node (n) at (1.5,-1) {\scalebox{2}{$s_1$}};
					\end{tikzpicture}}};
					\node (1c) at (2.2,1) {\scalebox{.5}{
							\begin{tikzpicture}
								\fill (2,2) rectangle ++ (1,1); 
								\fill (2,1) rectangle ++ (1,1); 
								\fill (2,0) rectangle ++ (1,1); 
								\draw (0,0) grid (3,3);
								\node (n) at (1.5,-1) {\scalebox{2}{$s_1$}};
					\end{tikzpicture}}};
					\node (12a) at (-1.2,3.5) {\scalebox{.5}{
							\begin{tikzpicture}
								\fill (0,2) rectangle ++ (1,1); 
								\fill (1,1) rectangle ++ (1,1); 
								\fill (2,0) rectangle ++ (1,1); 
								\draw (0,0) grid (3,3);
								\node (n) at (1.5,-1) {\scalebox{2}{$s_1s_2$}};
					\end{tikzpicture}}};
					\node (12b) at (1.2,3.5) {\scalebox{.5}{
							\begin{tikzpicture}
								\fill (2,0) rectangle ++ (1,1); 
								\fill (2,1) rectangle ++ (1,1); 
								\fill (2,2) rectangle ++ (1,1); 
								\draw (0,0) grid (3,3);
								\node (n) at (1.5,-1) {\scalebox{2}{$s_1s_2$}};
					\end{tikzpicture}}};
					\node (txt) at (0,0) {$10 \hspace{1em} J=\{2,3\} \hspace{1em} 5$};
		\end{tikzpicture}}};
		\node[draw] (12) at (7.5,7.5) {\scalebox{.8}{
				\begin{tikzpicture}[anchor=south]
					\node (3a) at (2.2,1) {\scalebox{.5}{
							\begin{tikzpicture}
								\fill (0,0) rectangle ++ (1,1); 
								\fill (2,1) rectangle ++ (1,1); 
								\fill (1,2) rectangle ++ (1,1); 
								\draw (0,0) grid (3,3);
								\node (n) at (1.5,-1) {\scalebox{2}{$s_3$}};
					\end{tikzpicture}}};
					\node (3b) at (0,1) {\scalebox{.5}{
							\begin{tikzpicture}
								\fill (1,0) rectangle ++ (1,1); 
								\fill (0,1) rectangle ++ (1,1); 
								\fill (2,2) rectangle ++ (1,1); 
								\draw (0,0) grid (3,3);
								\node (n) at (1.5,-1) {\scalebox{2}{$s_3$}};
					\end{tikzpicture}}};
					\node (3c) at (-2.2,1) {\scalebox{.5}{
							\begin{tikzpicture}
								\fill (0,2) rectangle ++ (1,1); 
								\fill (0,1) rectangle ++ (1,1); 
								\fill (0,0) rectangle ++ (1,1); 
								\draw (0,0) grid (3,3);
								\node (n) at (1.5,-1) {\scalebox{2}{$s_3$}};
					\end{tikzpicture}}};
					\node (32a) at (1.2,3.5) {\scalebox{.5}{
							\begin{tikzpicture}
								\fill (0,2) rectangle ++ (1,1); 
								\fill (1,1) rectangle ++ (1,1); 
								\fill (2,0) rectangle ++ (1,1); 
								\draw (0,0) grid (3,3);
								\node (n) at (1.5,-1) {\scalebox{2}{$s_3s_2$}};
					\end{tikzpicture}}};
					\node (32b) at (-1.2,3.5) {\scalebox{.5}{
							\begin{tikzpicture}
								\fill (0,0) rectangle ++ (1,1); 
								\fill (0,1) rectangle ++ (1,1); 
								\fill (0,2) rectangle ++ (1,1); 
								\draw (0,0) grid (3,3);
								\node (n) at (1.5,-1) {\scalebox{2}{$s_3s_2$}};
					\end{tikzpicture}}};
					\node (txt) at (0,0) {$10 \hspace{1em} J=\{1,2\} \hspace{1em} 5$};
		\end{tikzpicture}}};
		\node[draw] (13) at (0,7.5) {\scalebox{.8}{
				\begin{tikzpicture}[anchor=south]
					\node (2a) at (2.2,1) {\scalebox{.5}{
							\begin{tikzpicture}
								\fill (0,0) rectangle ++ (1,1); 
								\fill (2,1) rectangle ++ (1,1); 
								\fill (1,2) rectangle ++ (1,1); 
								\draw (0,0) grid (3,3);
								\node (n) at (1.5,-1) {\scalebox{2}{$s_2$}};
					\end{tikzpicture}}};
					\node (2b) at (0,1) {\scalebox{.5}{
							\begin{tikzpicture}
								\fill (1,0) rectangle ++ (1,1); 
								\fill (0,1) rectangle ++ (1,1); 
								\fill (2,2) rectangle ++ (1,1); 
								\draw (0,0) grid (3,3);
								\node (n) at (1.5,-1) {\scalebox{2}{$s_2$}};
					\end{tikzpicture}}};
					\node (2c) at (-2.2,1) {\scalebox{.5}{
							\begin{tikzpicture}
								\fill (0,2) rectangle ++ (1,1); 
								\fill (0,1) rectangle ++ (1,1); 
								\fill (0,0) rectangle ++ (1,1); 
								\draw (0,0) grid (3,3);
								\node (n) at (1.5,-1) {\scalebox{2}{$s_2$}};
					\end{tikzpicture}}};
					\node (23a) at (3.6,3.5) {\scalebox{.5}{
							\begin{tikzpicture}
								\fill (0,2) rectangle ++ (1,1); 
								\fill (1,1) rectangle ++ (1,1); 
								\fill (2,0) rectangle ++ (1,1); 
								\draw (0,0) grid (3,3);
								\node (n) at (1.5,-1) {\scalebox{2}{$s_2s_3$}};
					\end{tikzpicture}}};
					\node (23b) at (1.2,3.5) {\scalebox{.5}{
							\begin{tikzpicture}
								\fill (0,0) rectangle ++ (1,1); 
								\fill (0,1) rectangle ++ (1,1); 
								\fill (0,2) rectangle ++ (1,1); 
								\draw (0,0) grid (3,3);
								\node (n) at (1.5,-1) {\scalebox{2}{$s_2s_3$}};
					\end{tikzpicture}}};
					\node (21a) at (-1.2,3.5) {\scalebox{.5}{
							\begin{tikzpicture}
								\fill (0,2) rectangle ++ (1,1); 
								\fill (1,1) rectangle ++ (1,1); 
								\fill (2,0) rectangle ++ (1,1); 
								\draw (0,0) grid (3,3);
								\node (n) at (1.5,-1) {\scalebox{2}{$s_2s_1$}};
					\end{tikzpicture}}};
					\node (21b) at (-3.6,3.5) {\scalebox{.5}{
							\begin{tikzpicture}
								\fill (0,0) rectangle ++ (1,1); 
								\fill (0,1) rectangle ++ (1,1); 
								\fill (0,2) rectangle ++ (1,1); 
								\draw (0,0) grid (3,3);
								\node (n) at (1.5,-1) {\scalebox{2}{$s_2s_1$}};
					\end{tikzpicture}}};
					\node (txt) at (0,0) {$12 \hspace{1em} J=\{1,3\} \hspace{1em} 7$};
		\end{tikzpicture}}};
		\node[draw] (123) at (0,12.5) {\scalebox{.8}{
				\begin{tikzpicture}[anchor=south]
					\node (ea) at (-6,1) {\scalebox{.5}{
							\begin{tikzpicture}
								\fill (0,0) rectangle ++ (1,1); 
								\fill (0,1) rectangle ++ (1,1); 
								\fill (0,2) rectangle ++ (1,1); 
								\draw (0,0) grid (3,3);
								\node (n) at (1.5,-1) {\scalebox{2}{$e$}};
					\end{tikzpicture}}};
					\node (eb) at (-3,1) {\scalebox{.5}{
							\begin{tikzpicture}
								\fill (0,0) rectangle ++ (1,1); 
								\fill (2,1) rectangle ++ (1,1); 
								\fill (1,2) rectangle ++ (1,1); 
								\draw (0,0) grid (3,3);
								\node (n) at (1.5,-1) {\scalebox{2}{$e$}};
					\end{tikzpicture}}};
					\node (ec) at (0,1) {\scalebox{.5}{
							\begin{tikzpicture}
								\fill (1,0) rectangle ++ (1,1); 
								\fill (0,1) rectangle ++ (1,1); 
								\fill (2,2) rectangle ++ (1,1); 
								\draw (0,0) grid (3,3);
								\node (n) at (1.5,-1) {\scalebox{2}{$e$}};
					\end{tikzpicture}}};
					\node (ed) at (3,1) {\scalebox{.5}{
							\begin{tikzpicture}
								\fill (2,0) rectangle ++ (1,1); 
								\fill (1,1) rectangle ++ (1,1); 
								\fill (0,2) rectangle ++ (1,1); 
								\draw (0,0) grid (3,3);
								\node (n) at (1.5,-1) {\scalebox{2}{$e$}};
					\end{tikzpicture}}};
					\node (ee) at (6,1) {\scalebox{.5}{
							\begin{tikzpicture}
								\fill (2,0) rectangle ++ (1,1); 
								\fill (2,1) rectangle ++ (1,1); 
								\fill (2,2) rectangle ++ (1,1); 
								\draw (0,0) grid (3,3);
								\node (n) at (1.5,-1) {\scalebox{2}{$e$}};
					\end{tikzpicture}}};
					\node (txt) at (0,0) {$5 \hspace{1em} J=\{1,2,3\} \hspace{1em} 5$};
		\end{tikzpicture}}};
		\draw (0.north west) -- (3.south east);
		\draw (0.north east) -- (1.south west);
		\draw (0.north) -- (2.south);
		\draw (3.north east) -- (13.south west);
		\draw (3.north west) -- (23.south west);
		\draw (1.north west) -- (13.south east);
		\draw (1.north east) -- (12.south east);
		\draw (2.north west) -- (23.south east);
		\draw (2.north east) -- (12.south west);
		\draw (12.north east) -- (123.south east);
		\draw (13.north) -- (123.south);
		\draw (23.north west) -- (123.south west);
\end{tikzpicture}
}
	\caption{\small{We take $W = S_4$ and $\vec{c} = (s_1, s_2, s_3)$ and $p = 3$.
	The box for a set $J$ consists of pairs $(v, \vec{\omega})$ such that $\Asc(v) = J$ and $\vec{\omega} \in \mathcal{D}^{(v)}(\vec{c}^p)$.
	Edges between boxes are inclusions of $J$'s.
	Each $\vec{\omega}$ is drawn as a $3 \times 3$ array, with elements of $\mathbf{e}_{\vec{\omega}}$ in black.
	For example, \raisebox{-2pt}{\captionfig} represents $\vec{\omega} = (\id, s_2, s_3, s_1, \id, s_3, s_1, s_2, \id)$.
	In each box, the number to the right, \emph{resp.}\@ left, of $J$ counts the pairs in the box, \emph{resp.}\ among boxes for supersets of $J$.
	The latter gives $\Park_{W, p}^{J, +}$.
The rightmost number in the $(k + 1)$th row is $\mu_4^k(L_{4, 3})|_{\X \to 1}$.
	}}
	\label[figure]{fig:a3p3}
\end{figure}

\end{document}